\pdfoutput=1
\documentclass[reqno, 12pt]{amsart}

\def\ssign{\textsection\nobreak\hspace{1pt plus 0.3pt}}
\makeatletter
\newif\if@sectsign \@sectsigntrue
\def\@seccntformat#1{\protect\textup{\protect\@secnumfont
		\ifnum\pdfstrcmp{#1}{section}=\z@
			\if@sectsign \protect\ssign\csname the#1\endcsname\protect\enspace
			\else \csname the#1\endcsname\protect\@secnumpunct\fi
		\else
			\csname the#1\endcsname\protect\@secnumpunct
		\fi
	}}
\g@addto@macro\appendix{\@sectsignfalse}
\makeatother

\usepackage{amsmath,amssymb,amsthm}
\usepackage{mathrsfs}
\usepackage{mathabx}\changenotsign
\usepackage{dsfont}
\usepackage{scalerel}
\usepackage{graphicx}

\usepackage[T1]{fontenc}
\usepackage{lmodern}
\usepackage[british]{babel}
\usepackage[babel]{microtype}

\usepackage{geometry}
\usepackage{xcolor}
\usepackage[backref]{hyperref}
\hypersetup{
	colorlinks,
	linkcolor={red!60!black},
	citecolor={green!60!black},
	urlcolor={blue!60!black}
}

\usepackage[open,openlevel=2,atend]{bookmark}

\usepackage[abbrev,msc-links,backrefs]{amsrefs}
\usepackage{doi}

\renewcommand{\PrintDOI}[1]{\doi{#1}}

\numberwithin{equation}{section}
\numberwithin{figure}{section}

\usepackage{calc}\usepackage{enumitem}
\def\rmlabel{\upshape({\itshape \roman*\,})}

\def\alabel{\upshape(\makebox[\widthof{\itshape a}][c]{\itshape \alph*}\,)}
\def\Alabel{\upshape({\itshape \Alph*\,})}

\theoremstyle{plain}
\newtheorem{thm}{Theorem}[section]
\newtheorem{fact}[thm]{Fact}
\newtheorem{prop}[thm]{Proposition}

\newtheorem{lemma}[thm]{Lemma}

\theoremstyle{definition}
\newtheorem{dfn}[thm]{Definition}

\theoremstyle{remark}

\newtheorem{clm}[thm]{Claim}

\let\eps=\varepsilon
\let\theta=\vartheta
\let\rho=\varrho
\let\phi=\varphi

\let\polishlcross=\l
\DeclareRobustCommand{\l}{\ifmmode\ell\else\polishlcross\fi}
\ifdefined{\def\l{l}}\fi
\usepackage{textcase}

\def\NN{\mathds N}

\def\ZZ{\mathds Z}

\makeatletter
\def\@defcal#1{\expandafter\def\csname c#1\endcsname{{\mathcal{#1}}}}
\def\@defscr#1{\expandafter\def\csname cc#1\endcsname{{\mathscr{#1}}}}
\def\@deffrak#1{\expandafter\def\csname f#1\endcsname{{\mathfrak{#1}}}}
\@tfor\next:=ABCDEFGHIJKLMNOPQRSTUVWXYZ\do{\expandafter\@defcal\next}
\@tfor\next:=ABCDEFGHIJKLMNOPQRSTUVWXYZ\do{\expandafter\@defscr\next}
\@tfor\next:=ABCDEFGHIJKLMNOPQRSTUVWXYZ\do{\expandafter\@deffrak\next}
\@tfor\next:=abcdefghjklmnopqrstuvwxyz\do{\expandafter\@deffrak\next}\makeatother

\makeatletter
\def\moverlay{\mathpalette\mov@rlay}
\def\mov@rlay#1#2{\leavevmode\vtop{ \baselineskip\z@skip
		\lineskiplimit-\maxdimen
\ialign{\hfil$\m@th#1##$\hfil\cr#2\crcr}}}
\newcommand{\charfusion}[3][\mathord]{
	#1{\ifx#1\mathop\vphantom{#2}\fi
		\mathpalette\mov@rlay{#2\cr#3}
	}
\ifx#1\mathop\expandafter\displaylimits\fi}
\makeatother

\newcommand{\dcup}{\charfusion[\mathbin]{\cup}{\cdot}}
\newcommand{\bigdcup}{\charfusion[\mathop]{\bigcup}{\cdot}}

\DeclareFontFamily{U} {MnSymbolC}{}
\DeclareSymbolFont{MnSyC} {U} {MnSymbolC}{m}{n}
\DeclareFontShape{U}{MnSymbolC}{m}{n}{
	<-6> MnSymbolC5
	<6-7> MnSymbolC6
	<7-8> MnSymbolC7
	<8-9> MnSymbolC8
	<9-10> MnSymbolC9
	<10-12> MnSymbolC10
<12-> MnSymbolC12}{}
\DeclareMathSymbol{\powerset}{\mathord}{MnSyC}{180}

\def\tand{\ \text{and}\ }
\def\qand{\quad\text{and}\quad}
\def\qqand{\qquad\text{and}\qquad}

\let\vn=\varnothing

\let\lra=\longrightarrow
\let\to=\lra

\makeatletter
\def\@shortto@sub_#1{_{\begingroup\let\to\rightarrow#1\endgroup}}
\newcommand{\@withshortto}[1]{#1\@ifnextchar_\@shortto@sub\relax}
\@for\@op:=lim,limsup,liminf,varlimsup,varliminf,varinjlim,varprojlim,projlim,injlim\do{\@ifundefined{\@op}{}{\expandafter\let\csname @save@\@op\expandafter\endcsname\csname\@op\endcsname
		\expandafter\edef\csname\@op\endcsname{\noexpand\@withshortto\expandafter\noexpand\csname @save@\@op\endcsname}}}
\makeatother

\usepackage{tikz}
\usetikzlibrary{calc}

\newcommand{\tedgefull}[5][]{\path[line join=round, #1]
	let \p1=($#4-#3$), \p2=($#5-#4$), \p3=($#3-#5$),
	    \n1={atan2(\y1,\x1)}, \n2={atan2(\y2,\x2)}, \n3={atan2(\y3,\x3)},
	    \n9={Mod(\n2-\n1,360)<180 ? 1 : -1}in
	($#3+(\n1-\n9*90:#2)$) -- ($#4+(\n1-\n9*90:#2)$)
	arc[start angle=\n1-\n9*90, delta angle={\n9*Mod(\n9*(\n2-\n1),360)}, radius=#2]
	-- ($#5+(\n2-\n9*90:#2)$)
	arc[start angle=\n2-\n9*90, delta angle={\n9*Mod(\n9*(\n3-\n2),360)}, radius=#2]
	-- ($#3+(\n3-\n9*90:#2)$)
	arc[start angle=\n3-\n9*90, delta angle={\n9*Mod(\n9*(\n1-\n3),360)}, radius=#2]
	-- cycle;
}

\newdimen\hvertexradius
\newcommand{\hedgeoffset}{2.25\hvertexradius}
\tikzset{
	hedge/.style={line width=1.25pt, fill opacity=0.2},
	tedge/.style={hedge, draw=red!70!black, fill=red!70!black},
}

\newcommand{\tedge}[4][]{\tedgefull[tedge, #1]{\hedgeoffset}{#2}{#3}{#4}}

\pgfdeclarelayer{foreground}
\pgfsetlayers{main,foreground}
\newcommand{\hvertex}[1]{\begin{pgfonlayer}{foreground}
		\fill #1 circle [radius=\hvertexradius];
	\end{pgfonlayer}}

\theoremstyle{plain}

\theoremstyle{definition}
\newtheorem{setup}[thm]{Setup}

\theoremstyle{remark}

\def\bl{\bigl(}
\def\br{\bigr)}

\makeatletter
\newcommand{\pushright}[1]{\ifmeasuring@#1\else\omit\hfill$\displaystyle#1$\fi\ignorespaces}
\newcommand{\pushleft}[1]{\ifmeasuring@#1\else\omit$\displaystyle#1$\hfill\fi\ignorespaces}
\makeatother

\let\sm=\smallsetminus

\def\csr{c_{\rm SR}}
\def\cdr{c_{\rm DR}}

\def\Cs{\cC_7^{-}}
\def\fQcr{\fQ_{\mathrm{cr}}}

\DeclareMathOperator{\Ind}{Ind}
\DeclareMathOperator{\Triad}{Triad_{\ee}}
\DeclareMathOperator{\reg}{reg}

\DeclareMathOperator{\SZRL}{SzRL}

\newcommand{\g}[1]{\widetilde{#1}}

\makeatletter

\newcommand{\glyphscale}{1.2}
\newcommand{\glyphkern}{0.75mu}
\newcommand{\glyphdotr}{.35}
\newcommand{\glyphringr}{.30}
\newcommand{\glyphringw}{0.10cm}
\newcommand{\glyphedgew}{0.28cm}
\newcommand{\glyphcompanions}{{pi}{\pi},{d}{d},{e}{e},{ccK}{\ccK},{E}{E}}

\@namedef{glyph@skip@eee}{}

\newcommand{\gdot}[1]{\draw[fill] (#1:1) circle (\glyphdotr);}
\newcommand{\gcirc}[1]{\draw[line width=\glyphringw] (#1:1) circle (\glyphringr);}
\newcommand{\gedge}[2]{\draw[line width=\glyphedgew] (#1:1) -- (#2:1);}

\newcommand{\glyph@use}[2]{\edef\glyph@now{\current@color}\expandafter\ifx\csname glyph@#1@col\endcsname\glyph@now\else
		\expandafter\global\expandafter\setbox\csname glyph@#1@box\endcsname
			\hbox{\tikz{#2}}\expandafter\xdef\csname glyph@#1@col\endcsname{\glyph@now}\fi
	\mathord{\mkern\glyphkern
		\scaleobj{\glyphscale}{\scalerel*{\usebox{\csname glyph@#1@box\endcsname}}{x}}\mkern\glyphkern}}

\newcommand{\glyph@companion}[3]{\ifcsname glyph@skip@#1#3\endcsname\else
		\ifcsname #1#3\endcsname
			\errmessage{NewGlyph: companion
				\expandafter\string\csname #1#3\endcsname\space is already defined}\fi
		\expandafter\DeclareRobustCommand\csname #1#3\endcsname{#2_{\csname #3\endcsname}}\fi
}
\newcommand{\NewGlyph}[2]{\expandafter\newsavebox\csname glyph@#1@box\endcsname
	\expandafter\gdef\csname glyph@#1@col\endcsname{\glyph@unset}\expandafter\DeclareRobustCommand\csname #1\endcsname{\glyph@use{#1}{#2}}\@for\glyph@pair:=\glyphcompanions\do{\expandafter\glyph@companion\glyph@pair{#1}}}
\makeatother

\NewGlyph{vdeg}{\gdot{90}\gcirc{210}\gcirc{330}}
\NewGlyph{vv}  {\gcirc{90}\gdot{210}\gdot{330}}
\NewGlyph{pdeg}{\gcirc{90}\gdot{210}\gdot{330}\gedge{210}{330}}
\NewGlyph{vvv} {\gdot{90}\gdot{210}\gdot{330}}
\NewGlyph{ev}  {\gdot{90}\gdot{210}\gdot{330}\gedge{210}{330}}
\NewGlyph{ee}  {\gdot{90}\gdot{210}\gdot{330}\gedge{90}{330}\gedge{90}{210}}
\NewGlyph{eee} {\gdot{90}\gdot{210}\gdot{330}\gedge{90}{330}\gedge{90}{210}\gedge{210}{330}}

\begin{document}
\title[Regularity method for hypergraphs with~$4$-cycle-free links]{Regularity method for hypergraphs with~$4$-cycle-free links}

\author[A.~Basu]{Ayush Basu}
\address{Department of Mathematics, Emory University, Atlanta, USA}
\email{ayush.basu@emory.edu}

\author[Chr.~Reiher]{Christian Reiher}
\address{Fachbereich Mathematik, Universit\"at Hamburg, Hamburg, Germany}
\email{christian.reiher@uni-hamburg.de}

\author[V.~R\"{o}dl]{Vojt\v{e}ch R\"{o}dl}
\address{Department of Mathematics, Emory University, Atlanta, USA}
\email{vrodl@emory.edu}

\author[M.~Schacht]{Mathias Schacht}
\address{Fachbereich Mathematik, Universit\"at Hamburg, Hamburg, Germany}
\email{schacht@math.uni-hamburg.de}

\thanks{The third author is supported by NSF grant DMS~2300347.}

\keywords{removal lemma, hypergraph regularity method, sparse hypergraphs, tight cycles}
\subjclass[2020]{Primary 05C65; Secondary 05C35, 05C38}

\begin{abstract}
	We extend the hypergraph regularity method to sparse~$3$-uniform
	hypergraphs whose vertex links are~$C_4$-free. In other words, we consider hypergraphs
	$H=(V,E)$ that are~$K_{1,2,2}$-free, which implies that $|E|=O(|V|^{5/2})$.
	For such hypergraphs we establish a sparse analogue of the removal lemma
	for the tight cycle on seven vertices minus an edge.
\end{abstract}

\maketitle

\section{Introduction}
\label{sec:introduction}
The following hypergraph removal lemma, extending the Ruzsa--Szemer\'edi theorem~\cite{RS78} to~$k$-uniform hypergraphs for $k\geq 3$, was proved in~\cites{G07, NRS06, RS04}. Roughly speaking, it asserts that a~$k$-uniform hypergraph on~$n$ vertices containing only $o(n^{v(F)})$ copies of~$F$ can be made~$F$-free by removing $o(n^k)$ hyperedges.
\begin{thm}[Hypergraph removal lemma]
	\label{thm:graphremovallemma}
	For every integer $k\geq 2$, every~$k$-uniform hypergraph~$F$, and every $\eps>0$, there exist some $\delta>0$ and a positive integer~$n_0$
	such that the following holds: If~$H$ is a~$k$-uniform hypergraph on $n\geq n_0$ vertices
	with fewer than~$\delta n^{v(F)}$ copies of~$F$,
	then~$H$ can be made~$F$-free by removing fewer than $\eps n^k$ hyperedges. \qed
\end{thm}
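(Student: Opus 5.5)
The natural route is the hypergraph regularity method: regularize, clean, and then count copies via a counting lemma. For $k=2$ this is the classical graph removal lemma, obtained from Szemer\'edi's regularity lemma together with the graph counting lemma. For general $k$ I would follow the same template with the regularity lemma for $k$-uniform hypergraphs (in the form of R\"odl--Schacht or Gowers) and the accompanying counting lemma.

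\emph{Regularization.} Given $F$ and $\eps$, I first fix constants in the hierarchy required by the regularity lemma. I then apply it to $H$, obtaining a family of partitions $\mathscr P=(\mathscr P^{(1)},\dots,\mathscr P^{(k-1)})$. Here $\mathscr P^{(1)}$ is a vertex partition into $t$ parts. For each $j$, the partition $\mathscr P^{(j)}$ splits the $j$-sets that are crossing with respect to $\mathscr P^{(1)}$ into $(\delta_j,1/\ell)$-regular $j$-graphs. The number of classes is bounded by a function of $\eps$ and $v(F)$ alone. The key output is that $H$ is $(\delta_k,r)$-regular with respect to all but at most $\eps_0 n^k$ of the polyads, i.e.\ the $(k-1)$-uniform complexes $\hat P$ underlying the $k$-sets. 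Here $\delta_k$ can be chosen after $t,\ell$ only in the sense required by the regular approximation version; this is the standard point where one must use the "strong" form of the regularity lemma.

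\emph{Cleaning.} Next I delete from $H$ every edge of one of the following three kinds:
\begin{itemize}
\item edges that are not crossing with respect to $\mathscr P^{(1)}$, at most $\binom{k}{2}n^k/t$ of them;
\item edges lying on polyads where $H$ is irregular, at most $\eps_0 n^k$;
\item edges lying on polyads where the relative density of $H$ is below $\eps/3$, at most $(\eps/3) n^k$.
\end{itemize}
With $t$ large and $\eps_0$ small, fewer than $\eps n^k$ edges are removed in total. Call the result $H'$.

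\emph{Counting.} Suppose $H'$ still contains a copy of $F$. Its vertices lie in distinct classes, after replacing $F$ by a blow-up, or equivalently handling repeated classes via the standard homomorphism argument. Its edges then sit on polyads that are regular and of density at least $\eps/3$ for $H'$. These polyads assemble into a complex on the vertex set of $F$ with all levels regular. The counting lemma for regular complexes then yields at least $c(\eps,t,\ell)\, n^{v(F)}$ copies of $F$ in $H'\subseteq H$, where $c$ is roughly $(\eps/3)^{e(F)}\prod_j \ell^{-\binom{v(F)}{j}}$ times $t^{-v(F)}$. Setting $\delta$ below this bound, minimized over the boundedly many possible $t,\ell$, gives a contradiction. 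Hence $H'$ is $F$-free.

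The main obstacle is the counting lemma step, i.e.\ ensuring compatibility of the constants. The density $1/\ell$ of the lower-level partitions may be much smaller than the regularity parameter of the top level, which is exactly why the hierarchy $\delta_k\ll 1/\ell$ and the regular-approximation version of the lemma are needed. I would invoke the established counting lemma for this. The rest (cleaning bounds and the choice of $\delta$) is routine bookkeeping.
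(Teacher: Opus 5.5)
Your outline is the standard regularity--cleaning--counting argument of the works the paper cites for this theorem (the paper gives no proof of its own), and it goes through as written if you combine the R\"odl--Skokan regularity lemma, which makes $H$ itself $(\delta_k,r)$-regular on all but few polyads, with the Nagle--R\"odl--Schacht counting lemma. Your description of the crux is inaccurate, however: in that pairing $\delta_k$ need not be small compared with $1/\ell$, because $(\delta_k,r)$-regularity with $r$ allowed to depend on $t$ and $\ell$ is exactly what removes this requirement; if you literally switched to the regular approximation lemma instead, only an approximating hypergraph $G$ differing from $H$ in at most $\nu n^k$ hyperedges (with $\nu$ fixed before $\ell$) would be regular, and your cleaning and counting steps would then need an extra argument (for instance, discarding polyads on which $G$ and $H$ differ substantially and checking that the remaining discrepancy destroys only a small fraction of the copies of $F$ found in $G$).
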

This theorem has had several applications to dense~$k$-uniform hypergraphs on~$n$ vertices, i.e., those with $\Omega(n^k)$ hyperedges. It is natural to consider a sparse analogue, that is, a result for hypergraphs~$H$ with $pn^k$ hyperedges where $p\to 0$ as $n \to \infty$. A natural `sparse generalisation' of the removal lemma would assert that a hypergraph~$H$ with $o(p^{e(F)}n^{v(F)})$ copies of~$F$ can be made~$F$-free by removing $o(pn^k)$ hyperedges. However, such a statement is false in general, and finding the right conditions under which a `sparse removal lemma' does hold is a delicate question.

In recent years, progress has been made in this direction for graphs that are subgraphs of a random graph. Extending the approach of~\cite{RS78} to prove a removal lemma in this setting would require a sparse analogue of the regularity lemma~\cite{Sz78} and an associated counting lemma. The regularity lemma for subgraphs of sparse random graphs was obtained in~\cites{KRSparse, scott}
and a corresponding counting lemma appeared in~\cites{CGSS14, BMS15, ST15}.

In another direction, a removal lemma for sparse deterministic graphs that contain `few' $4$-cycles was proved by Conlon, Fox, Sudakov,
and Zhao~\cite{CFSZ21}. Recall that~$C_4$-free graphs on~$n$ vertices have at most $(1/2+o(1))n^{3/2}$ edges
(see~\cite{KST54}), i.e.,~$O(pn^{2})$ edges where $p=n^{-1/2}$. Further, one easily checks that~$C_4$-free graphs contain~$O(n^{5/2})$ copies of~$C_5$. The corresponding removal lemma from~\cite{CFSZ21} establishes
that such~$C_4$-free graphs containing~$o(p^5n^5)$ copies of~$C_5$ can be made~$C_5$-free by removing~$o(pn^2)$ edges.
\begin{thm}[Conlon, Fox, Sudakov \& Zhao]
	\label{thm:C4Spremoval}
	For every $\mu > 0$ there exist some $c>0$ and a positive integer~$n_0$ such that the following holds: If~$G$ is a~$C_4$-free graph on $n\geq n_0$ vertices containing fewer than $c n^{5/2}$ copies of~$C_5$, then~$G$ can be made~$C_5$-free by removing fewer than $\mu n^{3/2}$ edges. \qed
\end{thm}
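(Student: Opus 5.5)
Since this statement is the Conlon--Fox--Sudakov--Zhao theorem, I would follow their sparse regularity strategy, adapted to $C_4$-free graphs with $p=n^{-1/2}$.

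\emph{Regularity.} Because $G$ is $C_4$-free, it is upper-uniform at density $p$. Indeed, for disjoint $X,Y$ the K\H{o}v\'ari--S\'os--Tur\'an count of cherries gives $e(X,Y)\le |X||Y|^{1/2}+|Y|$. Hence $e(X,Y)=O(p|X||Y|)$ whenever $|X|,|Y|\ge \eta n$. This bounded-density-ratio condition is exactly the hypothesis of the sparse regularity lemma of Kohayakawa and R\"odl--Scott~\cites{KRSparse, scott}. Applying it with $\eps\ll\mu$, I obtain an equipartition $V_1\cup\dots\cup V_t$ in which all but $\eps t^2$ pairs are $(\eps,p)$-regular. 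I then clean up as usual. I delete the edges in irregular pairs, inside parts, and in pairs of relative density below $\mu/10$ (relative to $p$). In total this removes fewer than $\mu n^{3/2}$ edges. Call the remaining graph $G'$.

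\emph{Counting.} Suppose $G'$ still contains a copy of $C_5$. Its vertices lie in parts $V_{i_1},\dots,V_{i_5}$, with consecutive pairs $(\eps,p)$-regular of relative density at least $\mu/10$. Repeated indices are allowed but harmless, because I can split parts. It then suffices to prove a counting lemma: such a configuration, inside a $C_4$-free graph, spans at least $c(\mu)\,p^5 (n/t)^5 = c' n^{5/2}$ copies of $C_5$. This contradicts the hypothesis once $c$ is small.

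\emph{The main obstacle.} Sparse counting lemmas fail in general, so this lower bound must genuinely use $C_4$-freeness. My first attempt would be a \emph{densification} argument. Take the path $V_{i_2}V_{i_3}V_{i_4}$ with regular pairs of density about $p$. Because $G$ is $C_4$-free, two vertices share at most one common neighbour. So the auxiliary graph on $V_{i_2}\times V_{i_4}$ of pairs joined by a $2$-path through $V_{i_3}$ has roughly $p^2|V_{i_3}|(n/t)^2$ edges, with essentially no multiplicities. With $p=n^{-1/2}$ this is $\Theta((n/t)^2/t)$ edges, so the auxiliary graph is \emph{dense}, and sparse regularity should transfer its regularity. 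The intended conclusion is then about a $4$-cycle, not a $C_5$: the auxiliary dense graph together with the path $V_{i_4}V_{i_5}V_{i_1}V_{i_2}$ would yield a $4$-cycle in a graph of comparable form.

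This is where the plan is unresolved. The $C_5$ has only five vertices, so replacing the $2$-path through $V_{i_3}$ by one auxiliary edge leaves a $4$-cycle on $V_{i_1},V_{i_2},V_{i_4},V_{i_5}$. That is, one auxiliary edge $V_{i_2}V_{i_4}$ plus the three sparse edges $V_{i_4}V_{i_5}$, $V_{i_5}V_{i_1}$, $V_{i_1}V_{i_2}$. I have not decided how to close this. One option is to densify a second $2$-path, for example $V_{i_4}V_{i_5}V_{i_1}$, giving a triangle of two dense auxiliary edges and one sparse edge $V_{i_1}V_{i_2}$. The other is to count dense--sparse--sparse--sparse $4$-cycles directly, again using $C_4$-freeness.

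In either case, the key technical step is to show two things. First, that the auxiliary $2$-path graph is regular, which I would approach via a Cauchy--Schwarz / second-moment count on $C_4$-free bipartite graphs. Second, that densifying one part of the cycle does not destroy the regularity of the sparse pairs that remain.

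Even with the open choice above, the plan is: regularity, then cleaning, then a $C_4$-free-aided counting lemma built on the densification step. I expect establishing that lemma to be the hardest step.
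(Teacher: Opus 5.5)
\textbf{There is a genuine gap.} Everything rests on a counting lemma for a single reduced $C_5$. You leave that lemma open, and the densification you sketch does not work at a scale where regularity says anything.

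With $m\approx n/t$ and $p=n^{-1/2}$, the $2$-path graph between $V_{i_2}$ and $V_{i_4}$ through $V_{i_3}$ has about $d^2p^2m^3=d^2m^2/t$ edges, i.e.\ relative density $\Theta(d^2/t)$. It is dense only in absolute terms. The number of parts must be allowed to be large compared with $1/\varepsilon$, so this density lies below the resolution $\varepsilon$ of the partition, and no dense regularity or counting statement applies to it. Nor does $C_4$-freeness transfer regularity: it only says that each pair $(x,z)$ lies on at most one such $2$-path, which is a factor $\Theta(t)$ above the average.

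Both of your proposed closings eventually need to count sparse edges or $2$-paths between sets of size $O(m/t)$, for instance the densified neighbourhood of one vertex. $\varepsilon$-regularity is blind to sets that small. A lower bound for $C_5$'s from one reduced copy, with $\varepsilon$ independent of $t$, is exactly the hard part, and the proposal does not supply it. (The paper itself only cites this theorem, but its proof of the hypergraph analogue shows how to get around the obstacle.)

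\textbf{How the paper avoids it.} The route taken for $\Cs$ transfers directly to graphs: never argue from a single surviving copy. Let $R$ be the reduced graph of regular pairs with density between $dp$ and $Cp$.
\begin{enumerate}
\item If $R$ has fewer than $\eta t^5$ homomorphic copies of $C_5$, a partite dense removal lemma (as in Proposition~\ref{prop:reducedremoval}) destroys them all. It deletes $\rho t^2$ pairs of $R$, i.e.\ at most $\rho Cn^{3/2}$ edges of $G$.
\item Otherwise, pigeonhole fixes $i_1,i_3,i_4$ with $\Omega(\eta t)$ admissible classes for each of $i_2$ and $i_5$. These are made disjoint by a random split, as in the proof of Proposition~\ref{prop:counting}.
\end{enumerate}
In the second case, take a typical $v_1\in V_{i_1}$. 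The endpoints $v_3$ of the $2$-paths $v_1v_2v_3$, with $v_2$ in any admissible class, are pairwise distinct by $C_4$-freeness. Because $p^2mt\approx 1$, there are about $\eta t\cdot d^2p^2m^2\approx \eta d^2m$ of them. This is a subset of $V_{i_3}$ whose density does not depend on $t$; the same holds in $V_{i_4}$. Sparse regularity of $(V_{i_3},V_{i_4})$ now applies to these linear-size sets and gives $\Omega_t(n^{5/2})$ copies of $C_5$.

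This is Claim~\ref{clm:44} followed by Lemma~\ref{lem:43}. Summing over the free middle class recovers the factor $t$ that densifying through one class loses.

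\textbf{Your cleaning step also needs repair.} The K\H{o}v\'ari--S\'os--Tur\'an bound gives a density ratio of $\eta^{-1/2}$ at scale $\eta n$, hence $\Theta(\sqrt t)$ for pairs of classes. So there is no fixed upper-uniformity constant, and the Kohayakawa--R\"odl hypothesis fails. If you use Scott's lemma instead, the $\varepsilon t^2$ irregular pairs may carry up to $\varepsilon t^2m^{3/2}\approx\varepsilon\sqrt t\,n^{3/2}$ edges, which is not small.

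What you need is the graph analogue of Proposition~\ref{prop:crowded}. Apply K\H{o}v\'ari--S\'os--Tur\'an to $V_i$ against the union of all $V_j$ with $e(V_i,V_j)\ge Cpm^2$. This shows there are only $O(t/C^2)$ such $j$, so crowded pairs carry $O(n^{3/2}/C)$ edges in total. After deleting them, the irregular pairs cost only $O(C\varepsilon n^{3/2})$.
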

We prove an analogous statement for a class of sparse~$3$-uniform hypergraphs. To state the result, we first introduce the following~$3$-uniform hypergraphs which play the r\^ole of~$C_5$ and~$C_4$ in our analogue of Theorem~\ref{thm:C4Spremoval}.

Given a positive integer $k\geq 4$, let the tight cycle~$\cC_k$ on~$k$ vertices be the~$3$-uniform hypergraph with vertex set~$\ZZ/k\ZZ$ and edge set
\[
	E(\cC_k)= \big\{\{i-1, i, i+1\}\colon i\in \ZZ/k\ZZ\big\}\,.
\]
Similarly, for $k\geq 4$, let~$\cC_k^{-}$, the tight cycle on~$k$ vertices minus an edge, be the~$3$-uniform hypergraph with vertex set~$\ZZ/k\ZZ$ and
\[
	E(\cC_k^{-})= \big\{\{i-1, i, i+1\}\colon i\in (\ZZ/k\ZZ)\sm\{0\}\big\}\,.
\]
The hypergraph~$\Cs$, which plays the r\^ole of~$C_5$, is illustrated in Figure~\ref{fig:c7minus}.
\begin{figure}[ht]
	\centering
	\begin{tikzpicture}
		\foreach \k in {0,...,6}
		\coordinate (v\k) at ({90-360/7*\k}:2.1);
		\foreach \a/\b/\c in {0/1/2, 1/2/3, 2/3/4, 3/4/5, 4/5/6, 5/6/0}
		{\tedge{(v\a)}{(v\b)}{(v\c)}}
		\foreach \k in {0,...,6}
		{\hvertex{(v\k)}}
		\foreach \k in {0,...,6}
		\node at ({90-360/7*\k}:2.5) {$\k$};
	\end{tikzpicture}
	\caption{The hypergraph~$\Cs$: the six shaded triples are its hyperedges, and the triple $\{6,0,1\}$
		is the removed hyperedge from~$\cC_7$.}
	\label{fig:c7minus}
\end{figure}

We denote by~$K_{1,2,2}$ the complete tripartite~$3$-uniform hypergraph with four hyperedges and vertex classes of size one, two, and two. This hypergraph will play the r\^ole of~$C_4$. Recall that the \emph{link} of a vertex~$w$ in a~$3$-uniform hypergraph~$H$ is the graph with vertex set $V(H)\sm\{w\}$ and edge set $\{xy\colon wxy\in E(H)\}$. Observe that~$H$ is~$K_{1,2,2}$-free if and only if no link of a vertex of~$H$ contains a~$C_4$. Just as for~$C_4$-free graphs, the extremal edge-density for~$K_{1,2,2}$-free~$3$-uniform hypergraphs is~$\Theta(n^{-1/2})$. Indeed, since the link of each vertex in a~$K_{1,2,2}$-free hypergraph~$H$ is~$C_4$-free, $H$ can have at most
$
	\frac{n}{3}\cdot \big(\frac{1}{2}+o(1)\big)n^{3/2}
	=
	\big(\frac{1}{6}+o(1)\big)n^{5/2}
$
hyperedges. On the other hand, it was shown by Mubayi~\cite{Mub02} that there exist~$K_{1,2,2}$-free hypergraphs on~$n$ vertices with~$\Omega(n^{5/2})$ hyperedges. Furthermore, it can be shown that the~$K_{1,2,2}$-free hypergraph constructed in~\cite{Mub02} contains $\Theta(n^4)$ copies of~$\Cs$.

Our main result parallels Theorem~\ref{thm:C4Spremoval} in the context of hypergraphs. We show that a~$K_{1,2,2}$-free hypergraph on~$n$ vertices with $o(n^4)$ copies of~$\Cs$ can be made~$\Cs$-free by removing $o(n^{5/2})$ hyperedges.
In the following statement, the subscript~$\mathrm{SR}$ of the constant~$\csr$ stands for `sparse removal'.
\begin{thm}[Sparse removal lemma for~$\Cs$]
	\label{thm:spremovalmain}
	For every $\mu>0$, there exists $\csr>0$ such that for sufficiently
	large~$n$ and $p = n^{-1/2}$ the following holds.

	If a~$K_{1,2,2}$-free~$3$-uniform hypergraph~$H$ on~$n$ vertices contains
	at most $\csr p^6n^7$ copies of~$\Cs$, then it can be
	made~$\Cs$-free by removing fewer than $\mu pn^3$ hyperedges.
\end{thm}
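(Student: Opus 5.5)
We follow the regularity-method template used for Theorem~\ref{thm:C4Spremoval}, adapted to $3$-uniform hypergraphs with $C_4$-free links. Throughout, $p=n^{-1/2}$, so $e(H)=O(pn^3)$.

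\textbf{Step 1: a sparse regularity lemma for $K_{1,2,2}$-free hypergraphs.} The $C_4$-freeness of links should make $H$ upper uniform: every edge set $A\subseteq E(H)$ supported on large vertex sets should satisfy $|A|\le (1+o(1))p|X||Y||Z|$. This lets us run a weak (Frieze--Kannan type) or Szemerédi type regularity argument relative to density~$p$. Concretely, I would partition $V$ into $t$ parts and partition the pairs into auxiliary bipartite graphs (a ``$2$-partition''). Then $H$ is $\eps$-regular, in the sparse sense, with respect to most triads, up to $o(pn^3)$ exceptional hyperedges. Since $H$ is sparse relative to pairs, the pair graphs are dense, and the regularity should be measured with the $K_{1,2,2}$-type deviation, via an octahedron/Cauchy--Schwarz count normalised by~$p$.

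\textbf{Step 2: cleaning.} Delete the hyperedges that lie in irregular triads, in triads of relative density $<\mu' p$, or on the few bad vertex/pair classes. This removes fewer than $\mu pn^3$ hyperedges in total. Suppose the remaining hypergraph $H'$ still contains a copy of $\Cs$. Its six edges then lie in six dense, regular triads that are arranged along the tight path $0,1,\dots,6$, with all relevant pair-graphs dense.

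\textbf{Step 3: a counting lemma for $\Cs$.} This is the main obstacle. We must show that such a configuration of triads yields at least $c\,p^6n^7$ copies of $\Cs$ in $H'$. Hence $H$ itself has more than $\csr p^6n^7$ copies, which contradicts the hypothesis. In the sparse setting, counting lemmas typically fail without a lower-level argument, and this is where the structure of $\Cs$ matters.

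The idea is to count by Cauchy--Schwarz along the tight path. Fix the middle vertex $3$ and its link structure. The edges $123,234,345$ together with the outer edges $012,456,560$ should be controlled by repeatedly applying the regularity estimate. In each step, one vertex sequence is extended and the count of ``tight walks'' is compared with the expected $p^{k}n^{k+2}$.

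The upper-bound counterparts of these estimates, needed to control error terms, come from the $C_4$-free links. Two hyperedges sharing a pair, together with a codegree bound, yield at most $O(n^{1/2})$ extensions on average. The same kind of bound shows that degenerate configurations, such as $K_{1,2,2}$-like repetitions, contribute negligibly. The missing edge $601$ is exactly what allows the count to be organised as a tree-like sequence of regular extensions, with no closing constraint. I expect the lower bound to follow from a density-increment-free iteration: at each of the six steps, regularity guarantees that a $(1-o(1))$ proportion of partial tight paths extend with roughly the expected number of choices.

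Finally, we choose the constants in the order $\mu\gg\mu'\gg\eps\gg 1/t\gg\csr$. The standard contradiction argument then completes the proof.
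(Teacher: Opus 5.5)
\textbf{Main gap: Step~3 and the final contradiction.} You want one surviving copy of~$\Cs$, i.e.\ one configuration of six dense regular triads, to force $c\,p^6n^7$ copies. This cannot be obtained from regularity whose parameter is fixed before~$t$.

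First, deleting $601$ does not make the count tree-like. The edges of~$\Cs$ are the consecutive triples of the closed walk $0,1,2,3,4,5,6,0$, so some constraint always closes a cycle. If you embed along $v_0,\dots,v_5$, the last vertex must satisfy $v_6\in N_H(v_4,v_5)\cap N_H(v_0,v_5)$. This is a common neighbourhood in the $C_4$-free link $L(v_5)$, so it has at most one element, and its expected size is only of order $d^2/(t\l^2)$. Hence no statement of the form ``a $(1-o(1))$-proportion extends with the expected number of choices'' is available. Indeed $p^6n^7=n^4$ is the trivial upper bound, and once $v_0,v_2,v_3,v_5$ are fixed, each of $v_1,v_4,v_6$ is a $0/1$ question.

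The workable organisation fixes~$v_0$ and considers the set $Q_{23}(v_0)$ of pairs $v_2v_3$ reached by tight paths $v_0v_1v_2v_3$. By $K_{1,2,2}$-freeness, $v_1$ is unique for each such pair. For a single choice of class and pair graphs for~$v_1$, however, this set has only about $d^2p^2m^3/\l^3\approx d^2m^2/(t\l^3)$ elements, i.e.\ relative density of order $1/(t\l^2)$ in $P^{i_2i_3}_{j_{23}}$. With your ordering $\eps\gg 1/t$, hypergraph regularity says nothing about hyperedges $v_2v_3v_4$ on so thin a set. A density independent of $t$ and~$\l$ only appears after summing over $\Omega(\l^2t)$ choices of $(i_1,j_{01},j_{12})$, and similarly for~$v_6$. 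That requires $\Omega(\l^7t^7)$ $\Cs$-conspiracies in the reduced system, not one.

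So the counting lemma can only say ``many conspiracies imply $\Omega(n^4)$ copies''. You additionally need a removal step at the reduced level. Apply the dense partite removal lemma to the $5$-uniform hypergraph on fourteen vertices that encodes conspiracies; this deletes $\rho\l^2t^3$ triads so that no conspiracy survives, and you then remove the hyperedges on those triads. Your plan contains neither the averaged counting lemma nor this second removal step.

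\textbf{Step~2 rests on a false premise.} $K_{1,2,2}$-freeness does not give $(1+o(1))$-upper uniformity. An extremal $K_{1,2,2}$-free hypergraph placed on $\gamma n$ vertices has density of order $\gamma^{-1/2}p$ there. This matches the K\H{o}v\'ari--S\'os--Tur\'an bound $e(X,Y,Z)\lesssim|X||Y|\sqrt{|Z|}$ coming from the links. In particular, a single triad can carry more than its share $pm^3/\l^2$ by a factor growing with~$t$. Therefore the up to $\delta\l^2t^3$ irregular triads need not be cheap to delete.

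The paper handles this with a separate estimate: triads of density at least $Cp$ carry at most $2pn^3/C$ hyperedges in total. The proof counts pairs inside $N_{P^{ij}_\alpha}(v)$ using the $C_4$-free links, together with convexity. These crowded triads are deleted as well. Only after that are the irregular triads, and the triads removed at the conspiracy level, cheap.

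Finally, measuring regularity by octahedron counts is degenerate here. Since $K_{2,2,2}\supseteq K_{1,2,2}$, $H$ contains no octahedra at all. Regularity has to be defined directly through densities on $\ee$-partite subgraphs.
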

We remark that every~$K_{1,2,2}$-free hypergraph~$H$ on~$n$ vertices contains at most~$n^4$ copies of~$\Cs$, so the hypothesis of Theorem~\ref{thm:spremovalmain} lies only a constant factor below this trivial bound. Indeed, as Figure~\ref{fig:K122free} shows, for fixed vertices~$w$,~$y$, and~$z$ there is at most one vertex~$x$ such that both triples $wxy$ and~$xyz$ are hyperedges in~$H$, since two such vertices would span a copy of~$K_{1,2,2}$. Here and below, we call a quadruple $wxyz$ of vertices a \emph{tight path} in~$H$ if both $wxy$ and~$xyz$ are hyperedges of~$H$. Now choose the vertices $v_0,v_2,v_3,v_5$ of a copy of~$\Cs$. Once these four vertices are fixed, the same observation, applied successively to the tight paths $v_0v_1v_2v_3$, $v_5v_4v_3v_2$, and $v_0v_6v_5v_4$, determines at most one choice for each of~$v_1$, $v_4$, and~$v_6$. Thus~$H$ contains at most~$n^4$ copies of~$\Cs$.

\begin{figure}[ht]
	\centering
	\begin{tikzpicture}[line width=0.75pt]
		\coordinate (w)  at (-2.1,0);
		\coordinate (x)  at (0,1.7);
		\coordinate (z)  at (2.1,0);
		\coordinate (xp) at (0,-1.7);
		\coordinate (y)  at (0,0);
		\tedge[draw=blue!70!black, fill=blue!70!black]{(y)}{(w)}{(x)}
		\tedge[draw=blue!70!black, fill=blue!70!black]{(y)}{(x)}{(z)}
		\tedge{(y)}{(z)}{(xp)}
		\tedge{(y)}{(xp)}{(w)}
		\draw (w) -- (x) -- (z) -- (xp) -- cycle;
		\draw[densely dotted] (y) -- (w) (y) -- (x) (y) -- (z) (y) -- (xp);
		\foreach \p in {w,x,xp,y,z}
		{\hvertex{(\p)}}
		\node[left=8pt]        at (w)  {$w$};
		\node[above=8pt]       at (x)  {$x$};
		\node[right=8pt]       at (z)  {$z$};
		\node[below=8pt]       at (xp) {$x'$};
		\node[above right=3pt] at (y)  {$y$};
	\end{tikzpicture}
	\caption{The vertices $\{w,x,x',y,z\}$ form a~$K_{1,2,2}$ with vertex classes~$\{y\}$, $\{x,x'\}$, and $\{w,z\}$: the four shaded regions are its four hyperedges, and the link of~$y$ is the~$4$-cycle $wxzx'$ (solid). The upper blue (resp.\ lower red) hyperedges form the tight path $wxyz$ (resp.\ $wx'yz$).}
	\label{fig:K122free}
\end{figure}

The rest of the paper is organised as follows. In~\ssign\ref{sec:outline} we
introduce the sparse hypergraph regularity method for~$K_{1,2,2}$-free
hypergraphs and deduce Theorem~\ref{thm:spremovalmain} from four propositions.
The proofs of those propositions are given in \S\ssign\ref{sec:crowded}\,--\,\ref{app:regularity}.

\section{Outline of the proof}\label{sec:outline}
The proof has three ingredients: a sparse hypergraph regularity lemma together with a
structural consequence of $K_{1,2,2}$-freeness (see Propositions~\ref{prop:regularity} and~\ref{prop:crowded} below),
a sparse counting lemma for~$\Cs$ (see Proposition~\ref{prop:counting}), and a
removal lemma for homomorphic images of~$\Cs$ in reduced hypergraphs arising in the context of the hypergraph regularity lemma (see Proposition~\ref{prop:reducedremoval}).
We introduce the relevant notation and then deduce Theorem~\ref{thm:spremovalmain} from those propositions.

The first ingredient is a
customised version of the sparse hypergraph regularity lemma,
which can be proved in a standard way.
For two disjoint sets~$X$ and~$Y$ we denote by~$K[X,Y]$ the complete bipartite graph
with these vertex classes.
We say that a bipartite graph $P=(X\dcup Y,E)$ is \emph{$(\eps, d)$-regular} if for all subsets
$X'\subseteq X$ and $Y'\subseteq Y$ we have
\[
	\big|e(X',Y')-d|X'||Y'|\big|\le \eps |X||Y|\,,
\]
where $e(X',Y')$ denotes the number of edges of~$P$ with one vertex in~$X'$ and one
vertex in~$Y'$. 
We say~$P$ is
\emph{$\eps$-regular} if it is $(\eps,d)$-regular for some $d\in[0,1]$.

By a~$\ee$-partite graph we mean a tripartite graph $P=(X\dcup Y\dcup Z,E)$
without any edges from~$X$ to~$Z$. Formally, this is the same as a bipartite graph
with vertex classes~$Y$ and $X\dcup Z$ together with a fixed partition of the second
class into~$X$ and~$Z$, but we think of~$\ee$-partite graphs as tripartite rather
than bipartite.
Such a~$\ee$-partite graph $P=(X\dcup Y\dcup Z, E)$ is called \emph{$(\eps, d)$-regular} (resp.\ \emph{$\eps$-regular})
if its induced bipartite subgraphs $P[X, Y]$ and~$P[Y, Z]$ are $(\eps, d)$-regular (resp.\ $\eps$-regular).

\begin{dfn}[Equitable partition]\label{def:equit-partition}
	For~$\tau>0$ and positive integers~$t$, $\l$
	we say that a family of bipartite graphs
	\[
		\ccP=\big\{P^{ij}_\alpha\colon ij\in[t]^{(2)}\tand \alpha\in[\l]\big\}
	\]
	is a \emph{$(\tau,t,\l)$-partition} on a given vertex set~$V$
	if there exists a partition $V_0\dcup V_1\dcup\cdots\dcup V_t=V$ such that
	\begin{enumerate}[label=\rmlabel]
		\item $|V_0|\leq \tau|V|$ and $|V_1|=\dots=|V_t|$, and
		\item for all $ij\in[t]^{(2)}$, the bipartite graph $K[V_i,V_j]$
		      is partitioned as
		      \[
			      K[V_i,V_j] = \bigdcup_{\alpha\in [\l]}P^{ij}_\alpha\,.
		      \]
	\end{enumerate}
	If, in addition, $\eps>0$ and the bipartite graphs~$P^{ij}_\alpha$ are
	$(\eps,1/\l)$-regular for all $ij\in[t]^{(2)}$ and $\alpha\in [\l]$,
	we say that~$\ccP$
	is a \emph{$(\tau,t,\l,\eps)$-equitable partition}.
\end{dfn}

Given~$\ccP$ as above, let $i,j,k\in[t]$ be pairwise distinct and let
$\alpha,\beta\in[\l]$. The ordered pair
\[
	(P^{ij}_\alpha,P^{jk}_\beta)
\]
will be called a~$\ee$-triad of~$\ccP$. We write
\[
	P^{ijk}_{\alpha\beta}=P^{ij}_\alpha\dcup P^{jk}_\beta
\]
for the associated~$\ee$-partite graph with vertex partition
$V_i\dcup V_j\dcup V_k$.
We shall
freely identify a~$\ee$-triad $(P^{ij}_\alpha,P^{jk}_\beta)$ with its associated graph $P^{ijk}_{\alpha\beta}$
whenever the meaning is clear.
The set of all~$\ee$-triads is denoted by
\[
	\Triad(\ccP)
	=
	\bigl\{(P^{ij}_\alpha,P^{jk}_\beta)\simeq P^{ijk}_{\alpha\beta}
	\colon i,j,k\in[t]\ \text{pairwise distinct and}\
	\alpha,\beta\in[\l]\bigr\}\,.
\]
Here and below, the ordering of $i,j,k$ in the superscript is part of
the notation. Thus, for example, $P^{ijk}_{\alpha\beta}$ and~$P^{ikj}_{\alpha\beta}$ in general denote different graphs.

Having described the type of partition the sparse regularity lemma provides, we
need to discuss how a given hypergraph~$H$ can interact with such~$\ee$-triads. In
general, for a~$\ee$-partite graph $P=(X\dcup Y\dcup Z, E_P)$ we set
\[
	\ccKee(P)=\{(x, y, z)\in X\times Y\times Z\colon xy, yz\in E_P\}\,.
\]
If in addition to~$P$ we have a hypergraph $H=(V, E)$
with $V\supseteq X\dcup Y\dcup Z$, we set
\[
	\Eee(H\,|\,P)=\bigl\{xyz\in E\colon (x, y, z)\in \ccKee(P)\bigr\}
\]
as well as $e_{\ee}(H\,|\,P)=\bigl|\Eee(H\,|\,P)\bigr|$ and
\[
	\dee(H\,|\,P)=\frac{e_{\ee}(H\,|\,P)}{|\ccKee(P)|}\,,
\]
with the convention that $\dee(H\,|\,P)=0$ when $|\ccKee(P)|=0$ (which, however, never
occurs below).
In analogy with the notion of sparse regularity for graphs
in~\cites{KRSparse,scott}, we define a concept of hypergraph regularity
suitable for sparse~$3$-uniform hypergraphs and~$\ee$-triads.

\begin{dfn}[Hypergraph regularity]\label{def:reg}
	A~$3$-uniform hypergraph $H=(V,E)$ is \emph{$(\delta,d,p)$-regular with respect to
		a~$\ee$-partite graph $P=(X\dcup Y\dcup Z,E_P)$}
	with $V\supseteq X\dcup Y\dcup Z$ if for every~$\ee$-partite subgraph $Q\subseteq P$
	with $|\ccKee(Q)|\ge \delta|\ccKee(P)|$ we have
	\[
		\big|\dee(H\,|\,Q)-d\big|\le \delta p\,.
	\]
	Moreover, we simply say \emph{$H$ is $(\delta,p)$-regular with respect to~$P$}
	if it is $(\delta,d,p)$-regular for $d=\dee(H\,|\,P)$.
\end{dfn}

With these definitions at hand, we can state a sparse version of the hypergraph regularity
lemma suitable for our needs, a proof of which is sketched in~\ssign\ref{app:regularity}.

\begin{prop}[Sparse hypergraph regularity lemma]
	\label{prop:regularity}
	For all $\tau, \delta > 0$, $\eps\colon\NN\to (0,1]$, and positive integers~$t_0$,
	$\l_0$, there exist positive integers~$T_0$, $L_0$, and~$n_0$ such that for every
	integer $n\geq n_0$ and $p=n^{-1/2}$ the following holds.

	Every~$K_{1,2,2}$-free~$3$-uniform hypergraph~$H$ on~$n$ vertices with at most $pn^3$ hyperedges
	admits for some integers~$t$ and~$\l$ with $t_0\leq t\leq T_0$ and $\l_0\leq \l\leq L_0$
	a $(\tau,t,\l,\eps(\l))$-equitable
	partition~$\ccP$ such
	that for all but at most $\delta \l^2t^3$ of the~$\ee$-triads $P^{ijk}_{\alpha\beta}\in\Triad(\ccP)$,
	the hypergraph~$H$ is $(\delta, p)$-regular
	with respect to $P^{ijk}_{\alpha\beta}$.
\end{prop}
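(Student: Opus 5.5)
Our plan is to run the standard energy-increment (index-pumping) argument, as in the dense hypergraph regularity lemma, with two layers. The outer layer refines the vertex and pair partitions. The inner layer controls the pair partition through Szemerédi's regularity lemma applied to the cells of the current partition. Throughout, densities are normalised by $p$, and we use $K_{1,2,2}$-freeness only to guarantee that the normalised densities stay bounded. The energy (index) of a $(\tau,t,\l)$-partition~$\ccP$ is
\[
	\mathrm{ind}(\ccP)=\frac{1}{t^3\l^2}\sum_{P^{ijk}_{\alpha\beta}\in\Triad(\ccP)}\Bigl(\frac{\dee(H\,|\,P^{ijk}_{\alpha\beta})}{p}\Bigr)^2\,,
\]
or better a version weighted by $|\ccKee(P^{ijk}_{\alpha\beta})|/|V_i||V_j|^2|V_k|$, so that it is a genuine $L^2$-norm of a conditional expectation. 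This index is monotone under refinement by convexity.

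\emph{Boundedness.} We need $\mathrm{ind}(\ccP)=O(1)$ independently of $n$, and this is where $K_{1,2,2}$-freeness enters. For each middle vertex $y$, the number of edges $xyz$ is the number of edges in the link of $y$. That link is $C_4$-free, so it has at most $O(n^{3/2})$ edges. Moreover, for any sets $A,B$ its edges between them number at most roughly $|A||B|^{1/2}+|B|$ (Kővári--Sós--Turán). Averaging over $y$ in a regular $\ee$-triad, this bounds $\dee(H\,|\,Q)$ by $O(p)$ whenever $|\ccKee(Q)|$ is at least a fixed fraction of $n^3$, once $n$ is large compared to $t,\l$. Hence all normalised densities, and so the index, are bounded by an absolute constant $C$.

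\emph{Increment step.} Suppose more than $\delta\l^2t^3$ triads are irregular. For each irregular triad $P^{ijk}_{\alpha\beta}$ we pick a witness $Q\subseteq P^{ijk}_{\alpha\beta}$ with $|\ccKee(Q)|\ge\delta|\ccKee(P)|$ and $|\dee(H\,|\,Q)-\dee(H\,|\,P)|>\delta p$. The witness is given by subgraphs $Q^{ij}\subseteq P^{ij}_\alpha$ and $Q^{jk}\subseteq P^{jk}_\beta$. We then refine every $P^{ij}_\alpha$ by the common (Venn) refinement of all witness graphs lying in it. The number of pieces is at most $2^{2t\l}$ per pair. As in the dense case, the Cauchy--Schwarz defect gives an index increase of at least about $\delta^4$ relative to the $p^2$ normalisation. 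The boundedness from the previous step guarantees that the contributions of the small pieces are negligible.

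\emph{Restoring equitability.} The refined pieces are neither regular nor of equal density $1/\l'$. We fix this as in Rödl--Schacht or Frankl--Rödl. First we refine the vertex partition so that the new pieces can be regularised: we apply the graph regularity lemma simultaneously to all pieces with parameter $\eps(\l')$, which is where the dependence $\eps\colon\NN\to(0,1]$ is needed. Then we split each regular piece at random into subgraphs of density exactly $1/\l'$ for a suitable common $\l'$. Leftover parts of density not divisible by $1/\l'$, together with the exceptional vertices, are dumped into $V_0$ and into a small number of non-equitable pairs. Their total contribution to the index is a small error, which we absorb because $\tau$ and the errors are chosen much smaller than $\delta^4$. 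Monotonicity of the index under refinement, up to these errors, together with the bound~$C$, ensures that the process stops after at most $C/\delta^4$ iterations. This yields the bounds $T_0$ and $L_0$.

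\emph{Main obstacle.} We expect the hardest part to be the boundedness step combined with the error bookkeeping. The witness sets and the discarded parts must have enough $\ccKee$-mass that sparse $C_4$-free link counting controls $H$ on them. This amounts to a sparse hereditary upper-density condition analogous to $(\eta,p,D)$-upper-uniformity in~\cites{KRSparse,scott}, but relative to $\ee$-partite graphs rather than vertex sets. We would prove it directly: bound the number of hyperedges $xyz$ with $(x,y,z)\in\ccKee(Q)$ by summing, over $y$, the edges of the link of $y$ between $N_{Q^{ij}}(y)$ and $N_{Q^{jk}}(y)$, applying the Kővári--Sós--Turán bound to each term, and then using Cauchy--Schwarz over $y$.
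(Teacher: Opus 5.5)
\textbf{The gap is in your Boundedness step}, and the whole increment scheme depends on it. A triad $P=P^{ijk}_{\alpha\beta}$ has $|\ccKee(P)|\approx m^3/\l^2\approx n^3/(t^3\l^2)$, which is not a fixed fraction of $n^3$.

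\emph{What your estimate actually gives.} K\H{o}v\'ari--S\'os--Tur\'an, averaged over the middle vertices, only yields $\dee(H\,|\,P)\le O(\sqrt{t\l})\,p$. In general your link argument gives $e_{\ee}(H\,|\,Q)\le n\sqrt{|\ccKee(Q)|}+n^2$, which gets worse as the path mass shrinks. Taking $n$ large does not remove the dependence on $t$ and $\l$.

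\emph{The loss is real.} Place a tripartite $K_{1,2,2}$-free hypergraph with $\Theta(m^{5/2})$ hyperedges (obtained from Mubayi's construction) on $V_1\dcup V_2\dcup V_3$. Then some triad $P^{123}_{\alpha\beta}$ has $\dee(H\,|\,P)=\Omega(\sqrt t\,p)$. Equivalently, $K_{1,2,2}$-free hypergraphs are not upper-uniform with a fixed constant: on $\eta n$ vertices their density can be of order $\eta^{-1/2}p$. So no bound $\dee\le Dp$ with $D$ independent of the partition exists. Since $t$ and $\l$ grow at every iteration, an index bound that depends on them does not stop the iteration. Note that your link counting is perfectly adequate for the bookkeeping you flagged as the main obstacle: the hyperedges containing a pair from a set $F$ of pairs number at most $2\sqrt n\,|F|+n^2$. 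It just cannot give a uniform density bound.

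\textbf{How the paper avoids this.} Following Scott, it replaces $x^2$ by the convex function $\phi$ that equals $x^2$ on $[0,2D]$ and is linear with slope $4D$ beyond, where $D=48/\delta^2$. Then $\phi(x)\le 4Dx$ gives $\Ind(H;\ccP)\le 4D\,e(H)/(pn^3)\le 4D$ from the global edge count alone. The price is twofold:
\begin{itemize}
\item[(a)] The convexity gain from a witness is only guaranteed for irregular triads of density at most $\delta Dp$ (Claim~\ref{clm:indinc1}). Claim~\ref{clm:indinc2} shows at least half of the irregular triads qualify, because $\sum_P e_{\ee}(H\,|\,P)\le 6pn^3$ leaves room for fewer than $\frac{\delta}{2}\l^2t^3$ denser ones.
\item[(b)] The $4D$-Lipschitz property of $\phi$ bounds the index lost to discarded hyperedges by their number. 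The paper controls that number via Proposition~\ref{prop:crowded}.
\end{itemize}

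\textbf{Keeping $x^2$.} If you want the plain quadratic index, you need an aggregate rather than a pointwise estimate. The pair-counting inequality~\eqref{eq:1459}, combined with Cauchy--Schwarz over $v\in V_j$, does give $\sum_P\dee(H\,|\,P)^2|\ccKee(P)|=O(p^2n^3)$ for equitable partitions with $n\gg(\l t)^2$. You would then also have to cope with $x^2$ not being Lipschitz when hyperedges are discarded.

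\textbf{A smaller point.} The output must be a genuine equitable partition, so ``a small number of non-equitable pairs'' is not allowed. Every $K[V_i,V_j]$ must be split into exactly $\l$ graphs that are $(\eps(\l),1/\l)$-regular. This is why the paper slices the leftovers and absorbs the remainder (Fact~\ref{fact:step3}). It takes the final $\l_1$ much larger than the number of witness pieces, and the Step~2 regularity parameter much smaller than $\eps(\l_1)$.
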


The absence of subhypergraphs isomorphic to~$K_{1,2,2}$ limits how many hyperedges
can accumulate on~$\ee$-triads of unusually high density, whether or not these
triads are regular. To state this precisely, we first fix some terminology.
Arbitrary sets of quintuples $(i, j, k, \alpha, \beta)\in [t]^3\times [\l]^2$
with pairwise distinct~$i$, $j$, $k$ will be called
\emph{$(t, \l)$-quintuple systems}. Moreover, given
an equitable partition~$\ccP$ of a hypergraph~$H$ as well as $C\ge 1$
and $p=n^{-1/2}$, the \emph{crowded}~$\ee$-triads of~$\ccP$ are those indexed
by the quintuple system
\[
	\fQcr(C, p)=\big\{(i, j, k, \alpha, \beta)\in [t]^3\times [\l]^2\colon
	i,j,k\text{ are pairwise distinct and }
	\dee(H\,|\,P^{ijk}_{\alpha\beta})\ge Cp\big\}\,.
\]
The following proposition, which we prove in~\ssign\ref{sec:crowded}, bounds
the number of hyperedges supported by crowded~$\ee$-triads.

\begin{prop}[Crowded triads]\label{prop:crowded}
	Let $C\ge 1$, $t, \l\in \NN$, $\eps\le (8\l^2)^{-1}$, $\tau\le \frac 14$, and $p=n^{-1/2}$.
	If~$\ccP$ denotes a~$(\tau,t,\l,\eps)$-equitable partition of a~$K_{1,2,2}$-free
	hypergraph~$H$ on $n\ge (32\l t)^2$ vertices and
	\[
		m
		=|V_1|
		=\dots
		=|V_t|\,,
	\]
	then
	\[
		\sum\big\{e_{\ee}(H\,|\,P^{ijk}_{\alpha\beta})\colon (i, j, k, \alpha, \beta)\in \fQcr(C, p)\big\} \le \frac{2}{C}pn^3\,.
	\]
\end{prop}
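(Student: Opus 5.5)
The plan is a Markov-type argument combined with a Cauchy--Schwarz estimate that uses $K_{1,2,2}$-freeness. For every $\ee$-triad $T=P^{ijk}_{\alpha\beta}$ write $e_T=e_{\ee}(H\,|\,T)$ and $K_T=\ccKee(T)$. If $T$ is crowded, then $e_T/|K_T|\ge Cp$, so
\[
	e_T\le \frac{e_T^2}{Cp\,|K_T|}\,,
\]
and it suffices to show
\[
	\sum_T \frac{e_T^2}{|K_T|}\le 2pn^3\,,
\]
where the sum runs over all $\ee$-triads.

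\emph{Step 1: Cauchy--Schwarz in a single triad.} Fix $T$ and group its hyperedges by the pair $yz\in P^{jk}_\beta$. Let $\deg_T(yz)$ be the number of $x\in V_i$ with $xy\in P^{ij}_\alpha$ and $xyz\in E(H)$. Cauchy--Schwarz gives
\[
	e_T^2\le e(P^{jk}_\beta)\sum_{yz}\deg_T(yz)^2\,.
\]
The sum $\sum_{yz}\deg_T(yz)^2$ counts triples $(y,z,\{x,x'\})$ with $x=x'$ allowed. The diagonal terms contribute $e_T$. For the off-diagonal terms we use that for fixed $y$ and $x\ne x'$ there is at most one $z$ with $xyz,x'yz\in E(H)$, since otherwise the link of $y$ contains a $4$-cycle $xzx'z'$. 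These terms therefore contribute at most
\[
	S_T=\sum_{y\in V_j}\deg_{P^{ij}_\alpha}(y)^2\,.
\]
Hence
\[
	e_T^2\le e(P^{jk}_\beta)\,(e_T+S_T)\,.
\]

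\emph{Step 2: quantities controlled by graph regularity.} From $(\eps,1/\l)$-regularity with $\eps\le(8\l^2)^{-1}$ we want three estimates.
\begin{itemize}
	\item $e(P^{jk}_\beta)\le(1/\l+\eps)m^2$, which is immediate.
	\item $|K_T|=\sum_y\deg_{P^{ij}_\alpha}(y)\deg_{P^{jk}_\beta}(y)\ge c_1m^3/\l^2$.
	\item $S_T\le c_2m^3/\l^2$.
\end{itemize}
Here $c_1,c_2$ are absolute constants, and $c_2/c_1$ should come out close to $1$. Both nontrivial bounds follow from the standard degree argument: by regularity, the sets of $y$ whose degree into $V_i$ (or $V_k$) deviates from $m/\l$ by more than $\eps^{1/2}m$ or so are small. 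For the upper bound on $S_T$, I expect to apply the regularity inequality to $Y'=\{y\colon \deg(y)\ge(1/\l+s)m\}$ for dyadic thresholds $s$ and sum. This is where the hypothesis $\eps\le(8\l^2)^{-1}$ is used. I expect it to be the main technical step, because the constant $2$ in the statement leaves limited room for the losses. If the clean dyadic argument is too lossy, I would instead bound the excess $\sum_y(\deg(y)-m/\l)^2$ directly via $e(Y',V_i)$ for $Y'$ the set of high-degree vertices.

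\emph{Step 3: summation.} Combining Steps 1 and 2 gives
\[
	\frac{e_T^2}{|K_T|}\lesssim \frac{\l}{m}\Bigl(e_T+\frac{m^3}{\l^2}\Bigr)
	=\frac{\l}{m}e_T+\frac{m^2}{\l}\,.
\]
We sum this over all $t^3\l^2$ triads.

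The first term is small. Each hyperedge lies in at most $6$ triads, because once the middle index is chosen its indices $\alpha,\beta$ are determined. Moreover $e(H)\le n^{5/2}/6$, since all links are $C_4$-free. So this contribution is at most $\l n^{5/2}/m$, which is $o(pn^3)$ because $m\ge n/(2t)$ by $\tau\le1/4$ and $n$ is large compared with $\l t$.

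The second term sums to about $t^3\l m^2\le t\l n^2$. This is at most $pn^3=n^{5/2}$ precisely when $t\l\le n^{1/2}$, and the hypothesis $n\ge(32\l t)^2$ gives this with a factor $32$ to spare. That spare factor is what absorbs the constants $c_1,c_2$ and the $(1+\eps\l)$ losses, leaving a total of at most $2pn^3$.

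Dividing by $Cp$ then yields
\[
	\sum\bigl\{e_{\ee}(H\,|\,P^{ijk}_{\alpha\beta})\colon (i,j,k,\alpha,\beta)\in\fQcr(C,p)\bigr\}\le \frac{2}{C}pn^3\,.
\]
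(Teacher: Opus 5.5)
Your Step~1 inequality $e_T^2\le e(P^{jk}_\beta)(e_T+S_T)$ is correct. The argument nevertheless has a genuine gap, hidden by a slip in the normalisation. From $e_T\le e_T^2/(Cp|K_T|)$ you need $\sum_T e_T^2/|K_T|\le 2p^2n^3=2n^2$, not $2pn^3$: dividing $2pn^3$ by $Cp$ gives $2n^3/C$, not $\frac 2C pn^3$. Against the correct target, Step~3 fails. The $S_T$-term contributes about $\frac{\ell}{m}\cdot\frac{m^3}{\ell^2}=\frac{m^2}{\ell}$ per triad, hence about $t^3\ell m^2\approx t\ell n^2$ in total, which exceeds $2n^2$ by a factor of order $t\ell$. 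Your argument therefore only proves that the crowded triads carry $O(t\ell\,pn^3/C)$ hyperedges. That is worthless, because the statement must hold for every $C$ independently of $t$ and $\ell$, and in the applications $t\ell$ is much larger than $C$.

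This loss is structural, not a matter of constants. Treating each triad on its own, Step~1 only gives $e_T$ at most about $m^{5/2}/\ell^{3/2}$, i.e.\ density at most about $\sqrt{\ell/m}\approx\sqrt{t\ell}\,p$. Together with the global bound on $e(H)$, nothing in your inequalities rules out that a $\Theta(1/\sqrt{t\ell})$-fraction of the path mass has density about $\sqrt{t\ell}\,p$. That would put $\Theta(pn^3)$ hyperedges into triads that are crowded for all $C$ up to order $\sqrt{t\ell}$.

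The waste is that the pairs $\{x,x'\}\subseteq A_y=N_{P^{ij}_\alpha}(y)$ are paid for separately in each of the roughly $t\ell$ triads $P^{ijk}_{\alpha\beta}$ sharing the first graph $P^{ij}_\alpha$. Yet $C_4$-freeness of $L(y)$ allows each such pair at most one common link-neighbour in all of $V$.

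The missing idea, which is the paper's proof, is to aggregate over those triads:
\begin{itemize}
\item[(i)] Fix $(i,j,\alpha)$ and let $I$ be the set of $(k,\beta)$ for which $(i,j,k,\alpha,\beta)$ is crowded.
\item[(ii)] For $v\in V_j$ put $B_v=\bigcup_{(k,\beta)\in I}N_{P^{jk}_\beta}(v)$ and use $\sum_{y\in B_v}\binom{|A_v\cap N_{L(v)}(y)|}{2}\le\binom{|A_v|}{2}$. Each pair is then charged once across all crowded triads.
\item[(iii)] Sum over $v$ and apply convexity with $S=\sum_v|B_v|\le|I|\cdot\frac{9m^2}{8\ell}$.
\item[(iv)] Insert the threshold only through $e(H^{ij}_\alpha)\ge|I|\cdot Cp\cdot\frac{3m^3}{4\ell^2}$, rather than through a global second moment over all triads.
\end{itemize}
This gives $e(H^{ij}_\alpha)\le 2m^2\sqrt n/(C\ell)$. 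Summing over the at most $t^2\ell$ choices of $(i,j,\alpha)$ yields $\frac 2C pn^3$. The hypothesis $n\ge(32\ell t)^2$ is used exactly to ensure $e(H^{ij}_\alpha)/S-1\ge\frac{5Cm}{8\ell\sqrt n}$ in this computation.
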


Proposition~\ref{prop:crowded} therefore suggests discarding the crowded~$\ee$-triads
and retaining only those that are regular and of density at least~$dp$. This
motivates the following concept.

\begin{dfn}[Reduced quintuple system]\label{def:reduced}
	Given a $(\tau,t,\l,\eps)$-equitable partition~$\ccP$ of the vertex set~$V$ of
	a hypergraph~$H=(V,E)$ and positive real numbers~$\delta$, $d$, and~$p$, the \emph{reduced quintuple system}~$\fQ(\delta, d, p)$ of~$H$ with respect to~$\ccP$ is defined by
	\begin{multline*}
		\fQ(\delta, d, p)=\Bigl\{(i, j, k, \alpha, \beta)\in [t]^3\times [\l]^2
		  \colon i,j,k\text{ are pairwise distinct,}          \\
		  \text{$\dee(H\,|\,P^{ijk}_{\alpha\beta})\ge dp$,
		  	and~$H$ is $(\delta, p)$-regular
			with respect to~$P^{ijk}_{\alpha\beta}$}\Bigr\}\,.
	\end{multline*}
\end{dfn}

Suppose now that $v_0v_1v_2v_3v_4v_5v_6$
denotes the vertices of a copy of~$\Cs$ in a hypergraph~$H$ with hyperedges $v_{k-1}v_kv_{k+1}$ for
all non-zero~$k$.
If, in addition, we have a $(\tau,t,\l,\eps)$-equitable partition~$\ccP$ of~$H$,
then by tracing the indices of the vertex classes containing the vertices~$v_k$ and
the indices of the bipartite graphs containing the pairs~$v_kv_{k+1}$ we get fourteen
indices forming the following kind of configuration.

\begin{dfn}[$\Cs$-conspiracy]\label{def:conspiracy}
	A \emph{$\Cs$-conspiracy} in a $(t, \l)$-quintuple system~$\fQ$ is a $14$-tuple
	\[
		(i_0, i_1, i_2, i_3, i_4, i_5, i_6, j_{01}, j_{12}, j_{23}, j_{34}, j_{45}, j_{56}, j_{60})\in [t]^7\times [\l]^7
	\]
	such that, reading indices modulo~$7$,
	we have $(i_{k-1}, i_k, i_{k+1}, j_{k-1, k}, j_{k, k+1})\in \fQ$
	for all non-zero $k\in \ZZ/7\ZZ$.
\end{dfn}

Note that the indices $i_0, \dots, i_6$ in a $\Cs$-conspiracy need not be
distinct. For instance, $i_0=i_3$ is possible.
The connection between copies of~$\Cs$ and~$\Cs$-conspiracies just described will
be elaborated on in~\ssign\ref{sec:counting}, where we establish the following
sparse counting lemma.

\begin{prop}[Sparse counting lemma for~$\Cs$]\label{prop:counting}
	Suppose reals $d, \eta, \tau, \delta,\eps, \lambda\in (0, 1]$, and integers~$t$, $\l\ge 1$
	satisfy
	\[
		\tau\le \frac{1}{65}\,,\qquad
		\delta\le \frac{d^6\eta^3}{2^{27}}\,, \qquad
		\eps\le \frac{d^4\eta^2}{2^{21}\l^{2}}\,,\qquad
		\lambda=\frac{d^{10}\eta^4}{2^{31}\l^3 t^5}\,, \qqand
		t \ge \frac{16}{\eta}\,.
	\]
	If a~$K_{1,2,2}$-free hypergraph~$H$ on~$n$ vertices admits
	a $(\tau,t,\l,\eps)$-equitable partition~$\ccP$ such that
	for some $p\ge n^{-1/2}$ the reduced quintuple system $\fQ(\delta, d, p)$ contains
	at least $\eta \l^7 t^7$ different~$\Cs$-conspiracies, then~$H$ contains at least $\lambda n^4$
	copies of~$\Cs$.
\end{prop}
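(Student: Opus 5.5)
We aim to show that a single $\Cs$-conspiracy whose indices are pairwise distinct in the vertex classes supports $\Omega(n^4/(\l^7t^7))$ copies of $\Cs$, and then sum over conspiracies. First we discard degenerate conspiracies. The number of $14$-tuples in which two of $i_0,\dots,i_6$ coincide is at most $\binom72 t^6\l^7\le \frac{21}{t}\l^7t^7$. Note, however, that a conspiracy can have coincident indices only if these are non-consecutive; consecutive indices are already distinct because quintuples have distinct entries. To absorb this we need $t$ somewhat larger than $16/\eta$. We therefore handle the degenerate case by a cruder bound: either we count only the non-coincident pairs, or we note that the counting argument below only uses regularity along tight paths. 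In those paths all indices are consecutive, and different vertex classes may coincide as sets provided the chosen vertices are distinct. So we keep at least $\eta\l^7t^7/2$ conspiracies and count injective copies, losing only $O(n^3)$ non-injective maps.

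For a fixed conspiracy, write $P_k=P^{i_{k-1}i_ki_{k+1}}_{j_{k-1,k}j_{k,k+1}}$ for $k=1,\dots,6$. Each $P_k$ is a $(\delta,p)$-regular triad of density at least $dp$. Since the graphs $P^{ij}_\alpha$ are $(\eps,1/\l)$-regular, $|\ccKee(P_k)|\approx m^3/\l^2$ with $m\ge (1-\tau)n/t$. We count copies following the structure used in the upper bound $n^4$: choose $v_0,v_2,v_3,v_5$, and then each of $v_1,v_4,v_6$ is determined by a tight path. Concretely, define $A$ to be the set of tight paths $v_0v_1v_2v_3$ with $v_0v_1v_2\in \Eee(H\,|\,P_1)$ and $v_1v_2v_3\in\Eee(H\,|\,P_2)$. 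Define the tight path $v_2v_3v_4v_5$ through $P_3,P_4$ similarly, and the path $v_4v_5v_6v_0$ through $P_5,P_6$. A copy of $\Cs$ corresponds to a compatible triple of such paths. The key step is a lower bound on tight paths through two consecutive regular triads sharing the graph $P^{i_1i_2}_{j_{12}}$. For most edges $v_1v_2$ of $P^{i_1i_2}_{j_{12}}$ we use regularity (applied to subtriads $Q\subseteq P$ obtained by restricting to the edges $v_1v_2$ of small or large codegree) to show that codegrees into $V_{i_0}$ and $V_{i_3}$ are about $dp\cdot m/\l$. This yields about $d^2p^2m^4/\l^3$ tight paths, i.e.\ order $d^2 m^3/\l^3$ with $p^2m\approx 1/t$.

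For the gluing step we use Cauchy--Schwarz. Consider the $4$-partite "shadow" $3$-graph of pairs $(v_0,v_2,v_3)$ extendable by a $v_1$; by $K_{1,2,2}$-freeness each such triple has a unique $v_1$. We then use regularity of $P_3,\dots,P_6$ restricted to those triples, via Definition~\ref{def:reg} applied to subgraphs $Q$ of density at least $\delta$, to propagate densities along the chain. Each extension step is performed by restricting the current triad to a subgraph $Q$ (the pairs $v_2v_3$ or $v_4v_5$ carrying many partial copies). We need $|\ccKee(Q)|\ge\delta|\ccKee(P)|$, which is where $\delta\le d^6\eta^3/2^{27}$ enters. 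The resulting density of hyperedges in $Q$ stays at least $(d-\delta)p$. Multiplying the six factors $dp$ with the $m^7/\l^7$ vertex and pair factors gives about $d^6p^6m^7/\l^7\gtrsim d^6 n^4/(\l^7t^7)$ copies per conspiracy, up to losses.

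The main obstacle is closing the cycle: the last hyperedge $v_5v_6v_0$ must hit a prescribed $v_0$ already chosen at the start. Hypergraph regularity with respect to triads only controls densities on sets of $\ccKee$-triples, not on a pair $(v_5,v_0)$ fixed at both ends. Our first approach is to treat the cycle as two halves meeting at $v_0$ and $v_3$. Let $f(v_0,v_3)$ count the paths $v_0v_1v_2v_3$, and $g(v_0,v_3)$ count the paths $v_3v_4v_5v_6v_0$. The count is then $\sum f g$, and we bound it below by showing that the supports of $f$ and $g$ overlap on a $\Omega(\eta)$-fraction, using an averaging over conspiracies rather than per conspiracy. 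This is the reason the hypothesis counts $\eta\l^7t^7$ conspiracies and the final constant involves $\eta^4$ and $t^{-5}$. Finally we summing over conspiracies; distinct conspiracies yield disjoint sets of labelled copies, since the partition determines all indices. Dividing by automorphisms yields at least $\lambda n^4$ copies.
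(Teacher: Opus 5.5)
There is a genuine gap, and it sits where you yourself locate the main obstacle. Your plan is a per-conspiracy sparse counting lemma. You propagate densities along the six triads by restricting to subtriads $Q$ with $|\ccKee(Q)|\ge\delta|\ccKee(P)|$. Inside one conspiracy, however, the sets you would restrict to are far too thin.

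Fix $v_0\in V_{i_0}$. Since $K_{1,2,2}$-freeness allows at most one $v_1$ per pair $v_2v_3$, the number of pairs of $P^{i_2i_3}_{j_{23}}$ reachable from $v_0$ through $V_{i_1}$ equals the number of tight paths $v_0v_1v_2v_3$. For $p=n^{-1/2}$ and triads of density of order $dp$ (a case you must cover), this is typically about $d^2p^2m^3/\l^3\approx d^2m^2/(t\l^3)$, i.e.\ a $\Theta(d^2/(t\l^2))$-fraction of the edges. Dually, a pair $v_2v_3$ is reached from only a $\Theta(d^2/(t\l^2))$-fraction of $V_{i_0}$ on average.

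Here $\delta$ is bounded only in terms of $d$ and $\eta$, while $t$ and $\l$ may be arbitrarily large. So Definition~\ref{def:reg} says nothing about such sets, and your propagation step cannot be carried out within a single conspiracy. If instead you do not fix $v_0$, you lose exactly the information needed to close the cycle at $v_0$. Hence the heuristic $d^6p^6m^7/\l^7$ copies per conspiracy is not justified.

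Your fallback does not repair this. With $f(v_0,v_3)$ counting paths $v_0v_1v_2v_3$ and $g(v_0,v_3)$ counting paths $v_3v_4v_5v_6v_0$, the sum $\sum fg$ does not count copies of $\Cs$. The hyperedge $v_2v_3v_4$ straddles the two halves and is never checked. The claimed $\Omega(\eta)$ overlap `by averaging over conspiracies' is asserted without any mechanism. Note also that $\Omega(d^6n^4/(\l^7t^7))$ per conspiracy, summed over $\eta\l^7t^7$ conspiracies, would give $\Omega(\eta d^6n^4)$, which is inconsistent with the $\l^{-3}t^{-5}$ you invoke.

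The missing idea is to aggregate over the free indices \emph{before} using regularity, so that the sets fed into regularity have density independent of $t$. The paper proceeds as follows.
\begin{enumerate}[label=\rmlabel]
	\item By pigeonhole, fix $i_0,i_2,i_3,i_4,i_5,j_{23},j_{34},j_{45}$ shared by $\frac12\eta\l^4t^2$ conspiracies with distinct vertex indices.
	\item Split the remaining classes as $T_1\dcup T_6$. This decouples the admissible triples $(i_1,j_{01},j_{12})\in M_1$ from the triples $(i_6,j_{56},j_{60})\in M_6$, with $|M_1|,|M_6|\ge\frac18\eta\l^2t$.
	\item Lemma~\ref{lem:42} gives, for each $s\in M_1$, that $\frac78$ of the $v_0$ start $\frac14d^2p^2m^3/\l^3$ tight paths through $P^{i_0i_1}_{j_{01}},P^{i_1i_2}_{j_{12}},P^{i_2i_3}_{j_{23}}$.
	\item Take a $v_0$ that is good for half of $M_1$. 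Its paths end in pairwise distinct pairs $v_2v_3$, even across different $s$: distinct $v_1,v_1'$ ending in the same pair would create the $4$-cycle $v_0v_1v_3v_1'$ in the link of $v_2$. Hence $Q_{23}(v_0)$ has at least $\zeta m^2/\l$ pairs, where $\zeta=d^2\eta/65$. The factor $\l^2t$ from $|M_1|$ cancels the loss $p^2m\approx 1/t$ via $p^2mt\ge 1-\tau$. The same holds for $Q_{45}(v_0)$.
	\item Only now is triad regularity usable. Apply the two-sided Lemma~\ref{lem:43} to the middle tight path $v_2v_3v_4v_5$, with end pairs in $Q_{23}(v_0)$ and $Q_{45}(v_0)$. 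This gives $\theta p^2m^4$ such paths, where $\theta=\zeta^4d^2/(2^5\l^3)$. Each closes a copy of $\Cs$ via the witnessing $v_1$ and $v_6$.
\end{enumerate}
The total is $\frac m2\cdot\theta p^2m^4\ge\lambda n^4$ copies from a single index hub, which is where $\eta^4$, $\l^{-3}$ and $t^{-5}$ come from.

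A smaller point: your $\binom72$ bound for degenerate conspiracies is too weak. You need that only the $8$ index pairs $\{1,6\}$ and $\{k,k+3\}$ can coincide. This gives $8\l^7t^6\le\frac\eta2\l^7t^7$ exactly when $t\ge16/\eta$.
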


Finally, an argument based on the dense hypergraph removal lemma, which we shall give
in~\ssign\ref{sec:reduced-removal}, shows that if there are few~$\Cs$-conspiracies in $\fQ(\delta, d, p)$,
then one does not have to delete many quintuples to destroy all of them.
In the following statement, the subscript~$\mathrm{DR}$ of the constant~$\cdr$
stands for `dense removal'.

\begin{prop}[$\Cs$-conspiracy removal lemma]
	\label{prop:reducedremoval}
	For every $\rho>0$ there exists $\cdr>0$ such that for
	all natural numbers~$t$ and~$\l$ the following holds.

	If a $(t, \l)$-quintuple system~$\fQ$ (as introduced before
	Proposition~\ref{prop:crowded}) contains at most $\cdr \l^{7}t^7$ different
	$\Cs$-conspiracies, then there is some~${\fQ'\subseteq \fQ}$ of
	size $|\fQ'|\le \rho\l^{2}t^3$ such that there are no~$\Cs$-conspiracies
	in $\fQ\sm \fQ'$.
\end{prop}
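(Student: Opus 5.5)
We deduce Proposition~\ref{prop:reducedremoval} from the dense hypergraph removal lemma, Theorem~\ref{thm:graphremovallemma}. The plan is to encode the quintuple system~$\fQ$ as a dense $k$-uniform hypergraph~$\cH$ on $O(t\l)$ vertices in which copies of a fixed hypergraph~$F$ correspond to $\Cs$-conspiracies. Quintuples $(i,j,k,\alpha,\beta)$ involve three vertex indices and two pair indices, so the natural choice is a $5$-uniform hypergraph. We make it partite and ordered so that no spurious copies arise.

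\textbf{Encoding.} Take fourteen disjoint vertex classes: $A_0,\dots,A_6$, each a copy of~$[t]$, and $B_{01},B_{12},\dots,B_{60}$, each a copy of~$[\l]$. For every non-zero $k\in\ZZ/7\ZZ$ and every $(i_{k-1},i_k,i_{k+1},\alpha,\beta)\in\fQ$, add the $5$-edge
\[
\{i_{k-1}\in A_{k-1},\ i_k\in A_k,\ i_{k+1}\in A_{k+1},\ \alpha\in B_{k-1,k},\ \beta\in B_{k,k+1}\}.
\]
Let~$F$ be the $5$-uniform hypergraph on fourteen vertices $a_0,\dots,a_6,b_{01},\dots,b_{60}$ with the six edges $\{a_{k-1},a_k,a_{k+1},b_{k-1,k},b_{k,k+1}\}$ for $k\ne0$.

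\textbf{Counting copies.} A \emph{partite} copy of~$F$ in~$\cH$, meaning one that sends each $a_k$ into~$A_k$ and each $b_{k,k+1}$ into~$B_{k,k+1}$, is exactly a $\Cs$-conspiracy. Non-partite copies are not automatically excluded, so we handle them as follows. Every edge of~$\cH$ meets each class in at most one vertex, and for an edge of type~$k$ the ordered position of each vertex is determined by its class. Hence any copy of~$F$ induces a map from $V(F)$ to the fourteen classes that is a homomorphism of~$F$ into the ``type pattern'', and this map is one of boundedly many maps. For each such map~$\phi$, the copies of~$F$ that follow~$\phi$ are in bijection with conspiracy-like tuples obtained by identifying coordinates. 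Instead of analysing these cases, we apply the removal lemma directly to the $14$-partite setting via its standard partite (coloured) version. This version follows from Theorem~\ref{thm:graphremovallemma} applied to a blow-up, or equivalently to the $14$-partite $F$-homomorphism count.

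\textbf{Quantitative step.} The vertex count is $N=7t+7\l$. If there are at most $\cdr\l^7t^7$ conspiracies, then there are at most $\cdr\l^7t^7\le\cdr N^{14}$ partite copies. Choosing $\cdr$ as the~$\delta$ of the removal lemma for~$F$ with parameter $\eps'$, we can delete fewer than $\eps' N^5$ edges of~$\cH$ to destroy all partite copies. Each deleted edge corresponds to one quintuple, and we put these quintuples into~$\fQ'$.

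\textbf{Main obstacle.} The danger is unbalanced parameters: the bound $\eps'N^5$ should be at most $\rho\l^2t^3$, but if $\l\ll t$ (or vice versa) we have $N^5\gg\l^2t^3$. The fix is to blow up: replace each vertex of~$A_k$ by~$\l$ copies and each vertex of~$B$ by~$t$ copies. Then all classes have size $t\l$, we have $N=14t\l$, and a quintuple corresponds to $\l^3t^2$ blown-up edges. Conspiracies correspond to $(\l t)^7\cdot\#\mathrm{consp}$ partite copies, which is at most $\cdr (t\l)^{14}$. Removal then deletes fewer than $\eps'(14t\l)^5$ blown-up edges. We place a quintuple in~$\fQ'$ whenever at least half of its $\l^3t^2$ blow-up edges are deleted, which yields $|\fQ'|\le 2\eps'14^5 t^5\l^5/(\l^3t^2)=O(\eps')\l^2t^3$.

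For a quintuple with fewer than half of its blow-up edges deleted, we must show it cannot sit in a surviving conspiracy. For this we use a supersaturation-type averaging: if a conspiracy in $\fQ\sm\fQ'$ survived, then among the $(\l t)^7$ partite blow-up copies over it, each of its six edge-classes loses less than half of its blow-up edges. The problem is that a union bound over six classes does not go through with the threshold one half. We therefore use the threshold $1/7$ instead: each class then loses less than a $1/7$ fraction of the copies, so some copy survives, which is a contradiction. This changes only constants, and it is the step I expect to need the most care.
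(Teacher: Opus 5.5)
Your argument is correct, but its quantitative step differs from the paper's. The paper uses the same $14$-partite $5$-uniform encoding and the same $F$. It then invokes the partite removal lemma in the form of Theorem~\ref{thm:partite}, where the deleted hyperedges are bounded separately for each edge $e$ of $F$ by $\eps\prod_{i\in e}|V_i|=\eps t^3\l^2$. With $\eps=\rho/6$ it simply puts into $\fQ'_k$ the quintuples whose type-$k$ hyperedge was deleted, so the imbalance between $t$ and $\l$ never arises.

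You instead use a cruder form normalised by the total number of vertices. You compensate for the loss $N^5\gg\l^2t^3$ in three moves:
\begin{itemize}
\item blow up to balanced classes of size $t\l$;
\item flag a (quintuple, type) pair once at least a $1/7$ fraction of its $\l^3t^2$ blow-up edges is deleted;
\item pull back by a union bound over the six edge types of a uniformly random blow-up copy above a surviving conspiracy.
\end{itemize}
This is sound (any threshold at most $1/6$ works) and gives $|\fQ'|\le 7\cdot 14^5\eps'\l^2t^3$. In effect, your averaging argument recovers the per-edge-class bound from the crude one. So your route needs only the weaker, total-vertex normalisation, at the price of the blow-up and averaging bookkeeping. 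The paper's route is a one-step application with explicit constants, because Tao's formulation is already tailored to unbalanced class sizes.

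One caveat concerns your aside that the partite version ``follows from Theorem~\ref{thm:graphremovallemma} applied to a blow-up''. As stated this does not work, because non-transversal copies of $F$ are not controlled by the conspiracy count. Indeed, $F$ is not a core. Colour $a_0,\dots,a_6$ by $1,2,3,4,2,3,4$ and $b_{01},\dots,b_{60}$ by $4,5,1,5,1,5,2$; then every edge of $F$ is rainbow, so $F$ folds onto a single edge. Now take $\fQ=T_1\times T_2\times T_3\times[\l]^2$ with $[t]=T_1\dcup T_2\dcup T_3$ and parts of size about $t/3$. There are no conspiracies, since $i_1$ would have to lie in both $T_2$ and $T_1$. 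Yet there are $\Theta(t^8\l^6)$ non-transversal copies of $F$ in $\cH$, which is of order $N^{14}$ when $t\asymp\l$. You should simply cite the partite removal lemma itself (Theorem~\ref{thm:partite}, due to Tao), as the paper does.
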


We conclude this section by deducing the main result from these propositions.

\begin{proof}[Proof of Theorem~\ref{thm:spremovalmain} assuming
		Propositions~\ref{prop:regularity},~\ref{prop:crowded},~\ref{prop:counting}, and~\ref{prop:reducedremoval}]

	It suffices to consider $\mu\in(0,1)$.
		Given such a~$\mu$, we need to specify an appropriate constant $\csr>0$.
	We begin by setting
	\[
		C=\frac 8\mu\,, \qquad d= \frac{\mu}{8}\,, \qqand \rho=\frac{\mu}{8C}\,.
	\]
	Proposition~\ref{prop:reducedremoval} applied to~$\rho$ yields some $\cdr>0$.
	Decreasing~$\cdr$ if necessary, we may assume that $\cdr\le 1$.
	Now we set
	\[
		\tau
		=
		\frac{\mu}{65}\,,\qquad
		\delta
		=
		\frac{d^6\cdr^3}{2^{27}}\,, \qquad
		\l_0
		=1\,,\qqand
		t_0
		=\max\left\{\left\lceil\frac{24}{\mu}\right\rceil, \left\lceil\frac{16}{\cdr}\right\rceil\right\}
	\]
	and define the function $\eps\colon \NN\lra (0, 1]$ by
	setting $\eps(\l)=2^{-21}d^4\cdr^2\l^{-2}$ for every $\l\in \NN$.

	The sparse hypergraph regularity lemma (Proposition~\ref{prop:regularity})
	applied to~$\tau$, $\delta$, $\eps(\cdot)$, $t_0$, and~$\l_0$ yields three integers~$T_0$, $L_0$, and~$n_0$. Without loss of generality we can
	assume $n_0\ge (32\mu^{-1}L_0T_0)^2$.
	We claim that the constant
	\[
		\csr=\frac{d^{10}\cdr^4}{2^{32}L_0^3T_0^5}
	\]
	has the desired property.

	Having defined~$\csr$, we consider a~$K_{1,2,2}$-free~$3$-uniform hypergraph $H=(V,E)$
	on~${n\ge n_0}$ vertices containing at most $\csr p^6n^7$ copies of~$\Cs$ for $p=n^{-1/2}$.
	We need to show that~$H$ can be made~{$\Cs$-free} by removing fewer
	than $\mu pn^{3}$ hyperedges.

	Since $K_{1,2,2}\not\subseteq H$ implies $e(H)\le n^{5/2}=pn^3$, the
	sparse regularity lemma provides for some integers~$t$, $\l$ with $t_0\leq t \le T_0$ and $\l_0\le \l\le L_0$
	a $(\tau,t,\l,\eps(\l))$-equitable partition~$\ccP$
	such that for all but at most $\delta \l^2t^3$ of the
	$\ee$-triads the hypergraph~$H$ is
	$(\delta, p)$-regular.

	We denote the vertex partition of~$\ccP$ by
	$
		V= V_0 \dcup V_1 \dcup V_2\dcup \cdots \dcup V_t
	$,
	where $|V_0|\le \tau n$ and~$|V_1|= \cdots = |V_t| = m$, and the partition
	of the pairs by
	\[
		K[V_i,V_j]= \bigdcup_{\alpha\in [\l]}P^{ij}_\alpha
	\]
	for every $ij\in [t]^{(2)}$.

	First, we delete every hyperedge~$e$ satisfying
	\begin{enumerate}[label=\Alabel]
		\item\label{it:xa} $e\cap V_0\ne \vn$ or
		\item\label{it:xb} $|e\cap V_i|\ge 2$ for some $i\in [t]$,
	\end{enumerate}
	and all hyperedges~$e$ supported by a~$\ee$-triad~$P^{ijk}_{\alpha\beta}\in\Triad(\ccP)$ such that
	\begin{enumerate}[label=\Alabel, resume]
		\item\label{it:xc} $\dee(H\,|\,P^{ijk}_{\alpha\beta})\le dp$ or
		\item\label{it:xd} $\dee(H\,|\,P^{ijk}_{\alpha\beta})\ge Cp$.
	\end{enumerate}
	Having deleted all the above hyperedges, we finally delete all hyperedges supported by a~$\ee$-triad~$P^{ijk}_{\alpha\beta}$ such that
	\begin{enumerate}[label=\Alabel, resume]
		\item\label{it:xe} $H$ is not $(\delta, p)$-regular
		      with respect to $P^{ijk}_{\alpha\beta}$.
	\end{enumerate}
	Since~$H$ is~$K_{1,2,2}$-free, every link of a vertex is~$C_4$-free. In particular,
	the maximum vertex degree is bounded by $n^{3/2}$ and the number of hyperedges removed in item~\ref{it:xa}
	amounts to at most~$|V_0|n^{3/2}\leq\tau n^{5/2}$.

	Moreover, for fixed $i\in[t]$ and $u\in V_i$, the bipartite part of the
	link of~$u$ between $V_i\sm\{u\}$ and $V\sm V_i$ contains
	at most $m\sqrt n+n$ edges by the standard two-path count for~$C_4$-free bipartite graphs. Summing over $u\in V_i$, every
	hyperedge meeting~$V_i$ in exactly two vertices is counted twice.
	Hence there are at most~$\big(m^2\sqrt n+mn\big)/2$ such hyperedges.
	The same two-path count shows that every~$C_4$-free graph
	on at most~$m$ vertices has at most~$m^{3/2}$ edges. Since every hyperedge of
	$H[V_i]$ occurs in the links of each of its three vertices,
	\[
		e\big(H[V_i]\big)
		\le \frac13m\cdot m^{3/2}
		\le \frac13m^2\sqrt n\,.
	\]
	Consequently, item~\ref{it:xb} deletes fewer than
	$t(m^2\sqrt n+mn)$ hyperedges. Together with
	$t\ge t_0\ge 24/\mu$,
	the first three items cause the deletion of at most
	\[
		\tau n^{5/2}
		+
		t\cdot\big(m^2\sqrt n+mn\big)
		+
		dp n^3
		\leq
		\big(\tau+t^{-1}+n^{-1/2}+d\big)\cdot n^{5/2}
		<
		\frac{\mu}{4} n^{5/2}
	\]
	hyperedges.

	Proposition~\ref{prop:crowded} tells us that item~\ref{it:xd} leads to
	the deletion of at most $2n^{5/2}/C=\mu n^{5/2}/4$ additional hyperedges.

	Since, by the dense graph counting lemma, each~$\ee$-triad of~$\ccP$
	supports at most
	\[
		\Big(\frac{1}{\l^2}+2\eps(\l)\Big) m^3
		<
		2\frac{m^3}{\l^2}
	\]
	triples and, by the deletion in item~\ref{it:xd}, the density of each
	remaining~$\ee$-triad is at most~$Cp$, item~\ref{it:xe} concerns at most
	\[
		\delta \l^2 t^3\cdot 2Cp\frac{m^3}{\l^2}
		\le
		2C\delta n^{5/2}
		<
		\frac{\mu}{4}n^{5/2}
	\]
	hyperedges. Summarising the discussion so far, the set~$E'$ of hyperedges deleted at this
	moment satisfies $|E'|\le \frac 34\mu n^{5/2}$.

	For the hypergraph $H'=(V,E\sm E')$ the choice of constants above
	allows for an application of Proposition~\ref{prop:counting} with $\eta=\cdr$. Therefore,
	the reduced quintuple system $\fQ(\delta, d, p)$ for $H'$ contains fewer than $\cdr \l^7 t^7$ different~$\Cs$-conspiracies,
	since otherwise~$H'\subseteq H$ would contain at least
	$2\csr n^4=2\csr p^6n^7$
	copies of~$\Cs$.

	Consequently, Proposition~\ref{prop:reducedremoval} applies and as a result there is a
	subset $\fQ'\subseteq \fQ(\delta, d, p)$
	of at most $\rho \l^2 t^3$ quintuples such that $\fQ(\delta, d, p)\sm \fQ'$ contains no~$\Cs$-conspiracies.
	The set~$\fQ'$ corresponds to a collection of at most $\rho \l^2 t^3$ different~$\ee$-triads in $\Triad(\ccP)$.
	In view of the deletion in item~\ref{it:xd}, these~$\ee$-triads~$P^{ijk}_{\alpha\beta}$
	satisfy $\dee(H\,|\,P^{ijk}_{\alpha\beta})\le Cp$ and support at most
	\[
		\rho \l^2 t^3 \cdot 2Cp\frac{m^3}{\l^2}
		\le
		2C\rho n^{5/2}
		=
		\frac{\mu}{4} n^{5/2}
	\]
	hyperedges of~$H$. So we can delete those hyperedges as well, and in total
	fewer than
	\[
		\tfrac34\mu n^{5/2}+\tfrac{\mu}{4} n^{5/2}
		=
		\mu pn^3
	\]
	hyperedges are deleted.

	It remains to observe that no copies of~$\Cs$ survive all these deletions.
	Indeed, a surviving copy avoids the deletions in items~\ref{it:xa}
	and~\ref{it:xb}, so tracing the indices of its vertices and pairs as described
	before Definition~\ref{def:conspiracy} yields fourteen indices. The deletions
	in items~\ref{it:xc}--\ref{it:xe}, together with the deletion of
	the~$\ee$-triads corresponding to~$\fQ'$, ensure that the six quintuples
	arising in this way belong to~$\fQ(\delta,d,p)\sm\fQ'$; they would therefore
	form a~$\Cs$-conspiracy there, which is impossible.
\end{proof}

\section{Crowded triads}\label{sec:crowded}

In this section we prove Proposition~\ref{prop:crowded}.
For a graph~$P$, we write
$N_P(v)$ for the neighbourhood of a vertex~$v$ in~$P$.
Moreover, for two vertices~$x$, $y\in V$ of a $3$-uniform hypergraph $H=(V,E)$ we
denote by
\[
	N_H(x, y)=\{z\in V \colon xyz\in E\}
\]
their neighbourhood.
Finally, we write
\[
	L(x)=\big(V\sm\{x\},\{yz\colon xyz\in E\}\big)
\]
for the link of a vertex~$x$ of~$H$.

\begin{proof}[Proof of Proposition~\ref{prop:crowded}]
	Let us fix distinct indices $i, j\in [t]$ as well as some $\alpha\in[\l]$.
	Setting
	\[
		I=\bigl\{(k, \beta)\in [t]\times [\l]\colon (i, j, k, \alpha, \beta)\in \fQcr(C, p)\bigr\}
	\]
	and writing~$H^{ij}_{\alpha}$ for the subhypergraph of~$H$ whose hyperedges lie
	in~$\ee$-triads~$P^{ijk}_{\alpha\beta}$ with $(k, \beta)\in I$, it suffices
	to show
	\begin{equation}
		\label{eq:crowded-goal}
		e(H^{ij}_{\alpha})\le 2\frac{m^2\sqrt n}{C\l}\,.
	\end{equation}
	In the special case $I=\vn$ there are no
	hyperedges in~$H^{ij}_{\alpha}$, and our claim is clear. We may therefore assume $I\ne\vn$ from now on.

	The dense graph counting lemma gives
	\[
		\big|\ccKee(P^{ijk}_{\alpha\beta})\big|
		\ge
		\Bigl(\frac 1{\l^2}-2\eps\Bigr)|V_i||V_j||V_k|\ge \frac{3m^3}{4\l^2}
	\]
	for every $k\in [t]\sm\{i, j\}$ and every $\beta\in [\l]$.
	Thus the definition of~$I$ immediately yields the lower bound
	\begin{equation}
		\label{eq:1721}
		e(H^{ij}_{\alpha})
		\ge
		\sum_{(k, \beta)\in I} e_{\ee}(H\,|\,P^{ijk}_{\alpha\beta})
		\ge
		|I|\cdot \frac{3m^3}{4\l^2}\cdot\frac{C}{\sqrt{n}}\,,
	\end{equation}
	which turns out to be useful later.

	For every $v\in V_j$ we set
	\[
		A_v= N_{P^{ij}_\alpha}(v)
		\qqand
		B_v= \bigdcup_{(k, \beta)\in I} N_{P^{jk}_\beta}(v)\,.
	\]
	The number of partite homomorphisms from~$K_{1,2}$ to
	the $(\eps, 1/\l)$-regular bipartite graph~$P^{ij}_\alpha$
	such that the central vertex is mapped to~$V_j$ is, again by
	the dense graph counting lemma,
	\begin{equation}
		\label{eq:1448}
		\sum_{v\in V_j}|A_v|^2
		\le
		\left(\frac 1{\l^2}+2\eps\right)|V_i|^2|V_j|
		\le
		\frac{5m^3}{4\l^2}\,.
	\end{equation}
	Next, we observe that the sum
	\[
		S=\sum_{v\in V_j} |B_v|
	\]
	can be rewritten as
	\[
		S
		=\sum_{(k, \beta)\in I} e(P^{jk}_\beta)
		\le
		|I|\cdot\left(\frac{1}{\l}+\eps\right)m^2
		\le
		|I|\cdot\frac{9m^2}{8\l}\,,
	\]
	whence
	\begin{equation}
		\label{eq:1737}
		\frac{e(H^{ij}_{\alpha})}{S}
		\overset{\eqref{eq:1721}}{\ge}
		\frac{2Cm}{3\l \sqrt{n}}\,.
	\end{equation}
	Since our assumptions on~$C$, $\tau$, $n$, and the bound $|V_0|\le \tau n$ imply
	\[
		Cm
		\ge
		m
		=
		\frac{n-|V_0|}t
		\ge
		\frac{3n}{4t}
		\ge
		\frac{3\cdot 32\l t\sqrt{n}}{4t}
		=
		24\l\sqrt{n}\,,
	\]
	we conclude from~\eqref{eq:1737} that
	\begin{equation}
		\label{eq:1739}
		\frac{e(H^{ij}_{\alpha})}{S}-1
		\ge
		\frac{Cm}{\l \sqrt{n}}\left(\frac 23-\frac 1{24}\right)
		=
		\frac{5Cm}{8\l \sqrt{n}}\,.
	\end{equation}
	
	\begin{figure}[ht]
	\centering
	\begin{tikzpicture}[line width=0.75pt, scale=0.92, every node/.style={scale=0.92}]
		\tikzset{
			cls/.style ={rounded corners=5pt, draw=black!35, line width=0.6pt},
			sub/.style ={rounded corners=4pt, draw=blue!55!black, line width=0.7pt,
			             fill=blue!70!black, fill opacity=0.10},
			pij/.style ={draw=blue!70!black},
			link/.style={draw=red!70!black, line width=1.0pt},
			lbl/.style ={blue!70!black},
		}
\foreach \k in {1,...,7} {
			\coordinate (a\k) at (5.02+0.52*\k, 4.2);   \coordinate (b\k) at (5.02+0.52*\k, 2.4);   \coordinate (c\k) at (-0.18+0.52*\k, 0.6);  \coordinate (d\k) at (5.02+0.52*\k, 0.6);   \coordinate (e\k) at (10.22+0.52*\k, 0.6);  }
		\coordinate (v)  at (7.10,2.4);
		\coordinate (yy) at (13.34,0.6);
		\coordinate (ys) at (0.34,0.6);
\tedge[fill opacity=0.12]{(v)}{(a4)}{(yy)}
		\tedge[fill opacity=0.12]{(v)}{(a6)}{(yy)}
\draw[cls] (5.2,3.8) rectangle (9.0,4.6);
		\draw[cls] (5.2,2.0) rectangle (9.0,2.8);
		\foreach \xo in {0,5.2,10.4} \draw[cls] (\xo,0.2) rectangle (\xo+3.8,1.0);
\draw[sub] (6.84,3.86) rectangle (8.40,4.54);
		\foreach \xo in {0.08,2.16,5.28,6.84,10.48,13.08}
			\draw[sub] (\xo,0.26) rectangle (\xo+1.04,0.94);
\foreach \p in {a4,a5,a6,c1,c2,c5,c6,d1,d2,d4,d5,e1,e2,e6,e7} \draw[pij] (v) -- (\p);
\draw[link] (ys) -- (a4);
		\draw[link] (ys) -- (a5);
\draw[blue!70!black, line width=1.0pt, fill=white] (v) circle[radius=3.4pt];
		\foreach \k in {1,...,7} {\hvertex{(a\k)} \hvertex{(b\k)}
		                          \hvertex{(c\k)} \hvertex{(d\k)} \hvertex{(e\k)}}
\node at (4.5,0.6) {$\cdots$};
		\node at (9.7,0.6) {$\cdots$};
		\node[anchor=west]      at (9.15,4.2)  {$V_i$};
		\node[anchor=west]      at (9.15,2.4)  {$V_j$};
		\node[anchor=west]      at (14.35,0.6) {$V_k$};
		\node[anchor=east, lbl] at (5.05,4.2)  {$A_v$};
		\node[anchor=east, lbl] at (-0.2,0.6)  {$B_v$};
		\node[anchor=south east] at (7.01,2.84) {$v$};
		\node[anchor=north] at (13.34,0.10) {$y$};
		\node[anchor=north] at (0.34,0.10)  {$y'$};
		\node[anchor=south] at (7.10,4.66)  {$x$};
		\node[anchor=south] at (8.14,4.66)  {$x'$};
	\end{tikzpicture}
	\caption{The vertex $v\in V_j$ has neighbourhood~$A_v$
		in~$V_i$, and the neighbourhoods $N_{P^{jk}_\beta}(v)$ with $(k,\beta)\in I$ make
		up~$B_v$. The blue edges at~$v$ are
		those of the graphs $P^{ij}_\alpha$ and~$P^{jk}_\beta$. The red edges at~$y'$ lead
		to $A_v\cap N_{L(v)}(y')$, and for~$y$ the two hyperedges $vxy$ and~$vx'y$ behind
		such edges are drawn as shaded triples instead. Here~$y$ and~$y'$ share only~$x$,
		because sharing~$x'$ as well would close the~$4$-cycle $xyx'y'$ in~$L(v)$.}
	\label{fig:paircount}
\end{figure}

	After these preparations, the main point (which is illustrated in
	Figure~\ref{fig:paircount}) is that for each $v\in V_j$ we have
	\begin{equation}
		\label{eq:1459}
		\sum_{y\in B_v} \binom{|A_v\cap N_{L(v)}(y)|}2\le \binom{|A_v|}2\,,
	\end{equation}
	because both sides count pairs of vertices in~$A_v$.
	If some pair $\{x, x'\}\in A_v^{(2)}$ were counted twice on the left-hand side,
	there would have to be distinct vertices $y, y'\in B_v$ adjacent to both~$x$ and~$x'$ in~$L(v)$.
	But then~$L(v)$ would contain the~$4$-cycle~$xyx'y'$, contrary to~$H$ being~$K_{1,2,2}$-free.

	Having proved~\eqref{eq:1459}, we sum it over all $v\in V_j$. In view
	of~\eqref{eq:1448} this leads to
	\begin{equation}
		\label{eq:crowded-CS1}
		\sum_{v\in V_j}\sum_{y\in B_v} \binom{|A_v\cap N_{L(v)}(y)|}2 \le \frac{5m^3}{8\l^2}\,.
	\end{equation}
	The left-hand side is a sum of~$S$ binomial coefficients. Since
	\[
		\sum_{v\in V_j}\sum_{y\in B_v} \big|A_v\cap N_{L(v)}(y)\big|=e(H^{ij}_{\alpha})\,,
	\]
	a standard convexity argument shows
	\[
		e(H^{ij}_{\alpha})\left(\frac{e(H^{ij}_{\alpha})}{S}-1\right)
		=
		2S\binom{e(H^{ij}_{\alpha})/S}2
		\overset{\eqref{eq:crowded-CS1}}{\le}
		\frac{5m^3}{4\l^2}\,,
	\]
	which together with~\eqref{eq:1739} proves the desired estimate~\eqref{eq:crowded-goal} on $e(H^{ij}_{\alpha})$.
\end{proof}

\section{Sparse counting lemma}\label{sec:counting}

Before proving Proposition~\ref{prop:counting}, we briefly describe our strategy. The assumption that there are $\Omega(\l^7t^7)$
different~$\Cs$-conspiracies in the reduced quintuple system $\fQ(\delta, d, p)$
(Definition~\ref{def:reduced}) entails that there
are $\Omega(\l^4t^2)$ of them sharing some choice~$i_0$, $i_2$, $i_3$, $i_4$, $i_5$, $j_{23}$, $j_{34}$, and~$j_{45}$.
As it turns out, after fixing those indices we can still find~$\Omega_{t, \l}(n^4)$
copies $v_0\dots v_6$ of~$\Cs$ corresponding to these choices in the sense that
for $k\in \{0, 2, 3, 4, 5\}$ the vertex~$v_k$ will be in~$V_{i_k}$ and for $k\in\{2, 3, 4\}$
the pair $v_kv_{k+1}$ will belong to the bipartite graph~$P^{i_ki_{k+1}}_{j_{k, k+1}}$.

The detection of these copies proceeds in two major steps (see Figure~\ref{fig:strategy}).
First (cf.\ Lemma~\ref{lem:42}), from
a `typical' vertex $v_0\in V_{i_0}$ a sufficiently large proportion of the
pairs $v_2v_3\in E(P^{i_2i_3}_{j_{23}})$ can be reached by a tight path $v_0v_1v_2v_3$
in~$H$.
By the same argument, sufficiently many pairs $v_4v_5\in E(P^{i_4i_5}_{j_{45}})$ are reachable
by tight paths $v_0v_6v_5v_4$. As we shall see, for at least one half of the
vertices $v_0\in V_{i_0}$ this argument produces dense sets of reachable pairs
$Q_{23}(v_0)\subseteq V_{i_2}\times V_{i_3}$
and $Q_{45}(v_0)\subseteq V_{i_4}\times V_{i_5}$. Having found those, it remains
to observe in the second step that, by the regularity of~$H$, for sufficiently many pairs of pairs
$(v_2v_3, v_4v_5)\in Q_{23}(v_0)\times Q_{45}(v_0)$
the quadruple $v_2v_3v_4v_5$ is a tight path in~$H$ (cf.\ Lemma~\ref{lem:43}).
Both steps of the argument can be
studied in the following~$4$-partite context.

\begin{figure}[ht]
	\centering
	\begin{tikzpicture}[line width=0.75pt]
\foreach \k in {0,...,6}
		\coordinate (v\k) at ({90-360/7*\k}:2.4);
\foreach \a/\b/\c in {0/1/2, 1/2/3, 4/5/6, 5/6/0}
		{\tedge[draw=blue!70!black, fill=blue!70!black]{(v\a)}{(v\b)}{(v\c)}}
\foreach \a/\b/\c in {2/3/4, 3/4/5}
		{\tedge{(v\a)}{(v\b)}{(v\c)}}
		\draw[blue!70!black, line width=1.1pt] (v2) -- (v3) (v4) -- (v5);
		\draw[line width=1.1pt] (v3) -- (v4);
		\foreach \k in {0,...,6}
		{\hvertex{(v\k)}}
		\foreach \k in {0,...,6}
		\node at ({90-360/7*\k}:2.9) {$v_\k$};
		\node[blue!70!black] at ({90-360/7*2.5}:3.45) {$Q_{23}(v_0)$};
		\node at ({90-360/7*3.5}:3.45) {$P^{i_3i_4}_{j_{34}}$};
		\node[blue!70!black] at ({90-360/7*4.5}:3.45) {$Q_{45}(v_0)$};
		\node[blue!70!black] at (0,0.75)  {first step};
		\node[red!70!black]  at (0,-0.40) {second step};
	\end{tikzpicture}
	\caption{The two steps of the proof of Proposition~\ref{prop:counting}.}
	\label{fig:strategy}
\end{figure}

\begin{setup}\label{setup}
	Suppose that $P=(W\dcup X\dcup Y\dcup Z, E_P)$ is a~$4$-partite graph whose
	bipartite subgraphs $P[W, X]$, $P[X, Y]$, $P[Y, Z]$ are $(\eps, 1/\l)$-regular for
	some $\eps>0$ and $\l\ge 1$, and which has no further edges. Further,
	let~$\delta$, $d$, and~$p$ be positive reals, and let $H=(V, E)$ be a hypergraph
	with $V\supseteq W\dcup X\dcup Y\dcup Z$ that is $(\delta, p)$-regular with respect to
	the~$\ee$-partite graphs~$P[W, X, Y]$, $P[X, Y, Z]$
	and satisfies
	\[
		\min\big\{\dee\big(H\,|\,P[W, X, Y]\big)\,,\, \dee\big(H\,|\,P[X, Y, Z]\big)\big\}\ge dp\,.
	\]
\end{setup}

The first step amounts essentially to the following.

\begin{lemma}\label{lem:42}
	Given Setup~\ref{setup}, assume that $\eps\le \frac{1}{2^8\l^2}$
	and $\delta\le \min\{\frac{d}{2^4}, \frac{1}{2^7}\}$.
	Then there are at least $\frac78 |W|$ vertices $w\in W$ for which there
	are at least $\tfrac 14d^2p^2|X||Y||Z|/\l^3$ triples $(x, y, z)\in X\times Y\times Z$
	such that $wxyz$ is a path in~$P$ and a tight path in~$H$.
\end{lemma}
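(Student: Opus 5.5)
The plan is to find, for a typical $w\in W$, many pairs $xy$ that close a hyperedge with~$w$. I then show that few of these pairs are `bad' for the second triad $P[X,Y,Z]$. The obvious route fails: fixing~$w$ and applying the $(\delta,p)$-regularity of~$H$ to the $\ee$-partite graph spanned by the pairs $xy$ with $wxy\in E$ does not work. Such a graph carries only about a $dp/\l$-fraction of $\ccKee(P[X,Y,Z])$, far below the threshold~$\delta$. So regularity must only be applied to subgraphs that carry a constant proportion of the triads, and the per-$w$ conclusions must come from averaging (Markov).

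Write $d_1=\dee(H\,|\,P[W,X,Y])\ge dp$ and $d_2=\dee(H\,|\,P[X,Y,Z])\ge dp$. For $w\in W$ let $L_w$ be the set of edges $xy\in P[X,Y]$ with $wx\in P$ and $wxy\in E$. For an edge $xy\in P[X,Y]$ let $b(xy)$ be the number of $z\in N_P(y)\cap Z$ with $xyz\in E$. The number of triples wanted for~$w$ is exactly $\sum_{xy\in L_w}b(xy)$.

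\emph{Step 1 (typical degrees and $|L_w|$).} By $(\eps,1/\l)$-regularity of $P[W,X]$, all but $2\eps\l|W|$ (say) vertices of~$W$ have $|N_P(w)\cap X|\ge |X|/(2\l)$. For a lower bound on $|L_w|$, let $B\subseteq W$ be the set of $w$ with $|L_w|<\tfrac12 d_1\,|\ccKee(\{w\}\times X\times Y)|$. If $|B|\ge |W|/32$, I take $Q=P[B,X,Y]$. The dense counting lemma gives $|\ccKee(Q)|\ge \delta|\ccKee(P[W,X,Y])|$ using $\delta\le 2^{-7}$ and the bound on~$\eps$. Then $\dee(H\,|\,Q)<d_1/2\le d_1-\delta p$ because $\delta\le d/16$, a contradiction. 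The same argument with the roles of $X,Y,Z$ shows that all but $2\eps\l|Y|$ vertices $y$ have $|N_P(y)\cap Z|\ge|Z|/(2\l)$. Together, for at least $\tfrac{15}{16}|W|$ vertices~$w$ we get $|L_w|\ge \tfrac14 d_1p^{-1}\cdot p\,|X||Y|/\l^2$, i.e.\ roughly $\tfrac{d_1}{4}|X||Y|/\l^2$ up to the counting-lemma error.

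\emph{Step 2 (bad pairs are few).} Let $D\subseteq E(P[X,Y])$ be the set of pairs with $b(xy)<\tfrac12 d_2|Z|/(2\l)$; this includes all pairs whose $y$ has low $Z$-degree. Let $Q_D$ consist of the $X$--$Y$ edges in~$D$ together with all of $P[Y,Z]$. If $|D|$ were at least about $4\delta\l\,e(P[X,Y])$, then after discarding low-degree~$y$ we would have $|\ccKee(Q_D)|\ge\delta|\ccKee(P[X,Y,Z])|$, while $\dee(H\,|\,Q_D)<d_2-\delta p$, contradicting regularity. Hence $|D|\le 4\delta\l\,|X||Y|$.

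\emph{Step 3 (transfer to most~$w$).} I need $\sum_w|L_w\cap D|$ to be small. This equals $e_{\ee}(H\,|\,Q')$, where $Q'$ is the subgraph of $P[W,X,Y]$ whose $X$--$Y$ part is~$D$. To apply regularity I pad~$D$ with further edges of $P[X,Y]$ until $Q'$ carries exactly a $\delta$- to $2\delta$-fraction of $\ccKee(P[W,X,Y])$. Regularity then bounds the hyperedge count by $(d_1+\delta p)\cdot 2\delta\,|\ccKee(P[W,X,Y])|$. By Markov, for all but $|W|/16$ vertices $w$ we get $|L_w\cap D|\le 32\delta(d_1+\delta p)\,|X||Y|/\l^2$. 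Since $\delta\le d/16$, this is at most half of the Step~1 lower bound for $|L_w|$. For the remaining at least $\tfrac78|W|$ vertices~$w$, at least half of $L_w$ avoids~$D$. Each such pair contributes $b(xy)\ge \tfrac14 d_2|Z|/\l$, so we obtain
\[
\sum_{xy\in L_w} b(xy)\ \ge\ \tfrac18 d_1 p\,|X||Y|/\l^2\cdot \tfrac14 d_2 |Z|/\l\ \ge\ \tfrac14 d^2p^2|X||Y||Z|/\l^3,
\]
after tuning constants. (In Step~1 the bound $|L_w|\approx d_1|X||Y|/\l^2$ should of course read $d_1\cdot|X||Y|/\l^2$ with $d_1\ge dp$.)

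The main obstacle is the one flagged at the start: regularity cannot be applied per vertex~$w$. The real content is Step~3, where the padding trick keeps $Q'$ above the $\delta$ threshold while still giving an upper bound on the hyperedges through bad pairs. The numerical constants will be adjusted against $\eps\le 2^{-8}\l^{-2}$ and $\delta\le\min\{d/16,2^{-7}\}$.
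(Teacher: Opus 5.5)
Step~3 is where your argument breaks. As written, it does not prove the lemma with its stated constants.

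First, the padding presupposes that the pairs of~$D$ carry at most a $2\delta$-fraction of $\ccKee(P[W,X,Y])$. But Step~2 only bounds the pairs of~$D$ at vertices~$y$ of large $Z$-degree by $4\delta|X||Y|/\ell$, which is already about a $4\delta$-fraction of the path mass. On top of this come the pairs of~$D$ at vertices~$y$ of small $Z$-degree, which are controlled only by~$\eps$, not by~$\delta$. Contrary to your remark, such pairs are not automatically in~$D$, because $b(xy)$ is compared with the tiny quantity $d_2|Z|/(4\ell)$. There is also a path-mass error of order $\eps\ell^2$ from vertices~$x$ of atypically large $W$-degree.

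Second, even granting the padding, Markov gives only $|L_w\cap D|\le 32\delta(d_1+\delta p)|X||Y|/\ell^2$. This is at most $\frac18 d_1|X||Y|/\ell^2$ only if $\delta\le 2^{-8}$. The hypothesis $\delta\le d/16$ is irrelevant here, since it merely compares $\delta p$ with~$d_1$. For $\delta=2^{-7}$ the bound is at least $\frac14 d_1|X||Y|/\ell^2$, which is your entire Step~1 lower bound for~$|L_w|$.

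Third, your final display yields $\frac18\cdot\frac14=\frac1{32}$, not~$\frac14$. With thresholds at one half of the density on both sides, no choice of the remaining parameters can reach~$\frac14$. Suppose you push the thresholds up to about $\frac{15}{16}$ of the density, which is the most regularity allows when $d_1=d_2=dp$ and $\delta=d/16$. You then still need $|L_w\setminus D|\ge\frac4{15}dp|X||Y|/\ell^2$. So the margin left for Markov is at most about $\frac23 d_1|X||Y|/\ell^2$, and the exceptional fraction $\frac1{16}$ requires the average bad mass per vertex to be below about $\frac1{24}d_1|X||Y|/\ell^2$. That has to hold with $\delta$ up to $2^{-7}$ and $\eps\ell^2$ up to $2^{-8}$, while the degree-exceptional sets also use part of the $\frac18|W|$ budget. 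So `tuning constants' is far from routine, and your stated choices do not achieve it.

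The missing idea is to drop Markov altogether and reuse your own Step~1 trick. Apply regularity directly to the set of bad vertices~$w$, but with the $X$--$Y$ edges restricted to \emph{good} pairs:
\begin{itemize}
\item Let $Y'$ be the set of $y$ with $|N_P(y)\cap Z|\ge\frac{15}{16}|Z|/\ell$.
\item Let $R$ be the set of pairs $xy\in E_P$ with $y\in Y'$ admitting fewer than $\frac78dp|Z|/\ell$ vertices $z\in N_P(y)$ with $xyz\in E$. Regularity with respect to $P[X,Y,Z]$ gives $|R|\le 4\delta|X||Y|/\ell$.
\item Put $S=P[X,Y']-R$, and let $X'$ be the set of $x$ with $d_S(x)\ge\frac12|Y|/\ell$.
\item Discard the few~$w$ with at most $\frac58|X|/\ell$ neighbours in~$X'$.
\item Let $W''$ be the set of remaining~$w$ with at most $\frac27dp|X||Y|/\ell^2$ hyperedges $wxy$ such that $wx\in E_P$ and $xy\in E(S)$.
\end{itemize}
Now test the regularity of~$H$ with respect to $P[W,X,Y]$ on the $\ee$-partite graph~$T$ with $T[W,X]=P[W'',X']$ and $X$--$Y$ edges~$S$. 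Each $w\in W''$ contributes at least $\frac5{16}|X||Y|/\ell^2$ paths but few hyperedges. Hence $\dee(H\,|\,T)\le\frac{32}{35}dp<d_1-\delta p$, which forces $|W''|\le 8\delta|W|\le\frac1{16}|W|$. This controls the bad vertices at a cost linear in~$\delta$, with no Markov loss. Every surviving~$w$ then yields at least $\frac27\cdot\frac78=\frac14$ times $d^2p^2|X||Y||Z|/\ell^3$ tight paths.
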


\begin{proof}
	All paths we are about to exhibit will meet~$Y$ in its subset
	\[
		Y'=\{y\in Y\colon |N_P(y)\cap Z|\ge \tfrac{15}{16}|Z|/\l\}\,.
	\]
	By the $(\eps, 1/\l)$-regularity of $P[Y, Z]$, we have
	\[
		\frac{|Y\sm Y'||Z|}{\l}-\eps |Y||Z|
		\le
		e(Y\sm Y', Z)
		\le \frac{15|Y\sm Y'||Z|}{16\l}\,,
	\]
	whence $|Y\sm Y'|\le 16\eps\l |Y|\le \tfrac{1}{16}|Y|$ and
	\[
		|Y'|\ge \tfrac{15}{16} |Y|\,.
	\]
	We are going to avoid the edges of $P[X, Y]$ belonging to the exceptional set
	\[
		R=\bigl\{xy'\in E_P\colon x\in X,\ y'\in Y', \tand |N_P(y')\cap N_H(x, y')\cap Z|<\tfrac 78 dp|Z|/\l\bigr\}\,.
	\]
	To show that this set is small, we work with
	the~$\ee$-partite graph
	\[
		Q=\big(X\dcup Y\dcup Z, R\cup E(P[Y, Z])\big)\,.
	\]
	Since the definitions of~$R$ and~$Y'$ imply
	\[
		|N_P(y')\cap N_H(x, y')\cap Z|<\tfrac{14}{15} dp \, |N_P(y')\cap Z|
	\]
	for every $xy'\in R$, we have $\dee(H\,|\,Q)<\frac{14}{15} dp\le \dee(H\,|\,P[X, Y, Z])-\delta p$.
	But~$H$ is $(\delta, p)$-regular with respect to $P[X, Y, Z]$, so this is only possible if
	\[
		|\ccKee(Q)|
		\le \delta|\ccKee(P[X, Y, Z])|
		\le \delta(\l^{-2}+2\eps)|X||Y||Z|
		\le 2\delta |X||Y||Z|/\l^2\,.
	\]
	As every pair in~$R$ contributes at least $\frac{15}{16}|Z|/\l$ triples
	to $\ccKee(Q)$, this entails
	\[
		|R|
		\le
		\frac{4\delta |X||Y|}{\l}
		\le
		\frac{|X||Y|}{32\l}\,.
	\]
	Consequently, the graph $S=P[X, Y']-R$ satisfies
	\[
		e(S)
		\ge
		\frac{|X||Y'|}{\l}-\eps|X||Y|-\frac{|X||Y|}{32\l}
		\ge
		\left(\frac{15}{16}-\frac 1{32}-\frac 1{32}\right)\frac{|X||Y|}{\l}
		=
		\frac{7|X||Y|}{8\l}\,.
	\]

	Next, we analyse the set
	\[
		X'=\bigl\{x\in X\colon d_S(x)\ge \tfrac 12|Y|/\l\bigr\}\,.
	\]
	Owing to
	\begin{align*}
		\frac{|X'||Y|}{\l}+\eps|X||Y|
		 & \ge
		e_P(X', Y)
		\ge
		e_S(X', Y)
		\ge
		\frac{7|X||Y|}{8\l}-e_S(X\sm X', Y) \\
		 & \ge
		\frac{7|X||Y|}{8\l}-\frac{|X\sm X'||Y|}{2\l}
		=
		\frac{(3|X|+4|X'|)|Y|}{8\l}
	\end{align*}
	we have $|X'|\ge \frac14(3-8\l\eps)|X|\ge\frac{11}{16}|X|$.

	We will now consider two exceptional subsets of~$W$. First,
	\[
		W'=\bigl\{w\in W\colon |N_P(w)\cap X'|\le \tfrac58 |X|/\l\bigr\}
	\]
	satisfies, by the same argument that bounded $|Y\sm Y'|$ above,
	$|W'|\le 16\eps\l|W|\le \tfrac{1}{16}|W|$.
	Second, we consider
	\[
		W''=\Bigl\{w\in W\sm W'\colon \big|\bigl\{xy\in E(S)\colon wx\in E_P \tand wxy\in E\bigr\}\big| \le \tfrac 27dp|X||Y|/\l^2\Bigr\}\,.
	\]

	We claim that~$W''$ is small as well. To see this, we look at the~$\ee$-partite subgraph $T\subseteq P[W, X, Y]$
	with $T[W, X]=P[W'', X']$ whose edges from~$X$ to~$Y$ are those of~$S$. Each $w\in W''$ has at least $\tfrac 58|X|/\l$ neighbours
	in~$X'$ (by $w\not\in W'$), which in turn have degree at least $\frac 12|Y|/\l$
	in~$S$. Therefore, every $w\in W''$ contributes at least $\frac{5}{16}|X||Y|/\l^2$
	triples to $\ccKee(T)$. On the other hand, by the definition of~$W''$, at most
	$\tfrac 27dp|X||Y|/\l^2$ of these triples support a hyperedge of~$H$.
	This shows $\dee(H\,|\,T)\le \frac{32}{35} dp< \dee(H\,|\,P[W, X, Y])-\delta p$. Owing to
	the $(\delta, p)$-regularity of~$H$ with respect to $P[W, X, Y]$, this tells us
	\[
		\tfrac{5}{16}|W''||X||Y|/\l^2
		\le
		|\ccKee(T)|
		\le
		\delta |\ccKee(P[W, X, Y])|
		\le
		2\delta |W||X||Y|/\l^2\,,
	\]
	whence $|W''|\le 8\delta |W|\le \tfrac{1}{16}|W|$.

	Altogether, there are at least $\frac 78|W|$ vertices in $W\sm(W'\cup W'')$,
	and it suffices to show that all of them have the property described in the lemma.
	For any $w\in W\sm (W'\cup W'')$, the definition of~$W''$ leads to at least
	$\frac 27 dp|X||Y|/\l^2$ pairs~$(x, y)\in X\times Y$ such that $wx, xy\in E_P$,
	$wxy\in E$, and, moreover, $xy\in E(S)$. By the definition of~$R$, for every
	such pair~$(x, y)$ there are at least $\frac 78dp|Z|/\l$ vertices $z\in Z$
	such that $yz\in E_P$ and $xyz\in E$. Thus there are indeed at least
	$\frac 14d^2p^2|X||Y||Z|/\l^3$ paths of the required kind.
\end{proof}

The second step in the plan laid out above is provided by the following result.

\begin{lemma}\label{lem:43}
	Given Setup~\ref{setup} and some $\zeta\in (0, 1]$, suppose
	$\eps\le \frac{\zeta^2}{2^8\l^2}$
	and $\delta\le \min\{\frac{d}{2^3}, \frac{\zeta^3}{2^8}\}$, and
	set $\theta=\frac{\zeta^4 d^2}{2^5\l^3}$.
	If a subgraph $Q\subseteq P[W, X]\cup P[Y, Z]$
	satisfies
	\[
		e_Q(W, X)\ge \zeta |W||X|/\l \qand e_Q(Y, Z)\ge \zeta |Y||Z|/\l\,,
	\]
	then there are at least $\theta p^2 |W||X||Y||Z|$ tight paths $wxyz$ in~$H$
	with $wx, yz\in E(Q)$ and $xy\in E_P$.
\end{lemma}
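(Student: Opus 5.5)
The plan is to count, for each edge $xy\in E(P[X,Y])$, how many $w$ and how many $z$ extend it to a tight path. For every $xy\in E_P$ with $x\in X$, $y\in Y$ put
\[
	a(xy)=\big|\{w\in W\colon wx\in E(Q),\ wxy\in E\}\big| \qand
	b(xy)=\big|\{z\in Z\colon yz\in E(Q),\ xyz\in E\}\big|\,.
\]
The number of tight paths we have to exhibit is exactly $\sum_{xy\in E(P[X,Y])}a(xy)b(xy)$. We will show that, for many edges $xy$ weighted by $d_Q(x)d_Q(y)$, the factors $a(xy)$ and $b(xy)$ are both of order $dp\,d_Q(x)$ and $dp\,d_Q(y)$ respectively.

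First we clean up degrees. By the $(\eps,1/\l)$-regularity of $P[W,X]$ and $P[Y,Z]$, all but at most $2\eps\l|X|$ vertices $x\in X$ satisfy $d_P(x,W)\le 2|W|/\l$. Likewise all but $2\eps\l|Y|$ vertices $y$ satisfy $d_P(y,Z)\le 2|Z|/\l$. The exceptional vertices carry at most $2\eps\l|W||X|\le \frac{\zeta}{4}|W||X|/\l$ edges of $Q$ (using $\eps\le \zeta^2/(2^8\l^2)$). After deleting them, $Q$ still has at least $\frac34\zeta|W||X|/\l$ edges in $Q[W,X]$ and $\frac34\zeta|Y||Z|/\l$ edges in $Q[Y,Z]$. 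Moreover all $Q$-degrees are now at most $2|W|/\l$, respectively $2|Z|/\l$.

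Next we use hypergraph regularity to control the bad edges. Let
\[
	B_a=\{xy\in E(P[X,Y])\colon a(xy)<\tfrac12 dp\, d_Q(x)\}\,,
\]
and consider the $\ee$-partite graph $T=Q[W,X]\cup B_a$. Its triple count is $\sum_{xy\in B_a}d_Q(x)$ and its $H$-density is below $\frac12dp<\dee(H\,|\,P[W,X,Y])-\delta p$. Hence $(\delta,p)$-regularity forces
\[
	\sum_{xy\in B_a}d_Q(x)\le \delta|\ccKee(P[W,X,Y])|\le 2\delta|W||X||Y|/\l^2\,.
\]
Define $B_b$ symmetrically, using $d_Q(y)$ and $P[X,Y,Z]$; the same argument bounds $\sum_{xy\in B_b}d_Q(y)$. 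Since $d_Q(y)\le 2|Z|/\l$ and $d_Q(x)\le 2|W|/\l$, we obtain
\[
	\sum_{xy\in B_a\cup B_b}d_Q(x)d_Q(y)\le 8\delta|W||X||Y||Z|/\l^3\,.
\]

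Third, we need a lower bound on the total weight. Here we use a weighted form of graph regularity: for functions $f\colon X\to[0,1]$ and $g\colon Y\to[0,1]$,
\[
	\sum_{xy\in E(P[X,Y])}f(x)g(y)\ge \frac1\l\sum_x f\sum_y g-\eps|X||Y|\,.
\]
This follows by writing $f=\int_0^1\mathds 1[f\ge s]\,ds$, doing the same for $g$, and applying $(\eps,1/\l)$-regularity to each pair of level sets. With $f=d_Q(x)\l/(2|W|)$ and $g=d_Q(y)\l/(2|Z|)$ this gives
\[
	\sum_{xy}d_Q(x)d_Q(y)\ge \Big(\frac{9\zeta^2}{16\l^3}-\frac{4\eps}{\l^2}\Big)|W||X||Y||Z|\ge \frac{\zeta^2}{2\l^3}|W||X||Y||Z|\,.
\]
Subtracting the bad weight is harmless because $8\delta\le \zeta^3/32$. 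On each remaining edge we have $a(xy)b(xy)\ge \frac14d^2p^2d_Q(x)d_Q(y)$, so we obtain at least $c\,\zeta^2d^2p^2|W||X||Y||Z|/\l^3$ tight paths, which is comfortably above $\theta p^2|W||X||Y||Z|$.

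The step I expect to be delicate is making the regularity application for $B_a$ legitimate. The definition of $(\delta,p)$-regularity needs $|\ccKee(T)|\ge\delta|\ccKee(P)|$ before it yields a density conclusion, so I will argue by contradiction: if the weight of $B_a$ were larger than the bound above, $T$ would be large and would have to have density near $\dee(H\,|\,P[W,X,Y])\ge dp$. I also need to check the $\l$-powers carefully, in particular that the degree truncation at $2|W|/\l$ and $2|Z|/\l$ is what makes the bad weight scale like $\l^{-3}$ rather than $\l^{-2}$.
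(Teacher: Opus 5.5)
Your proof is correct. It shares the paper's key mechanism but does the counting differently.

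In both arguments, the bad $XY$-edges together with $Q[W,X]$ (resp.\ $Q[Y,Z]$) form a $\ee$-partite subgraph of $P[W,X,Y]$ (resp.\ $P[X,Y,Z]$) whose $H$-density is below $\dee(H\,|\,P)-\delta p$. So $(\delta,p)$-regularity caps its path count by $\delta|\ccKee(P)|$.

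The paper truncates degrees from below. It passes to the sets $X'$, $Y'$ of vertices whose $Q$-degree is at least half the average, and declares $xy$ bad via the absolute thresholds $\frac{7}{16}dp\zeta|W|/\l$ and $\frac{7}{16}dp\zeta|Z|/\l$. It then counts the edges of $P[X',Y']$ by plain regularity and multiplies by the two thresholds.

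You instead truncate from above, use thresholds relative to $d_Q(x)$ and $d_Q(y)$, and lower-bound the weighted sum $\sum_{xy}d_Q(x)d_Q(y)$ by the level-set form of $(\eps,1/\l)$-regularity. The upper truncation is exactly what turns the regularity bound on $\sum_{xy\in B_a}d_Q(x)$ into a bound on $\sum_{xy\in B_a}d_Q(x)d_Q(y)$ with the correct power $\l^{-3}$.

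Your route gives a better constant. The paper loses $\zeta^2$ in $|X'||Y'|$ and another $\zeta^2$ in the degree thresholds, ending with $\theta=\zeta^4d^2/(2^5\l^3)$. You get a count of order $\zeta^2d^2p^2|W||X||Y||Z|/\l^3$, with constant $\frac{15}{128}$ from your own estimates. Your bad-weight estimate would also tolerate $\delta$ of order $\zeta^2$ rather than $\zeta^3$.

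The paper's route, in exchange, needs only unweighted regularity and no upper control on degrees.
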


\begin{proof}
	We are going to take~$x$ and~$y$ from the sets
	\begin{align*}
		X' & =\bigl\{x\in X\colon d_Q(x)\ge \tfrac12 \zeta |W|/\l\bigr\}\qand
		Y' =\bigl\{y\in Y\colon d_Q(y)\ge \tfrac12 \zeta |Z|/\l\bigr\}\,,
	\end{align*}
	so we begin by estimating their sizes.
	Owing to $e_Q(W, X\sm X')\le \frac12 \zeta |W||X|/\l$
	we have $e_Q(W, X')\ge \frac12 \zeta |W||X|/\l$ and, therefore,
	\[
		\frac{\zeta|W||X|}{2\l}
		\le
		e_Q(W, X')
		\le
		e_P(W, X')
		\le
		\frac{|W||X'|}{\l}+\eps |W||X|\,,
	\]
	whence $|X'|\ge (\tfrac 12\zeta-\l\eps)|X|$.
	The same reasoning applies to~$Y'$, and since
	$\eps\le \frac{\zeta^2}{2^8\l^2}\le \frac{\zeta}{2^4\l}$ we obtain
	\[
		|X'|\ge \tfrac{7}{16}\zeta |X| \qand |Y'|\ge \tfrac{7}{16}\zeta |Y|\,.
	\]

	Next, we analyse certain `bad choices' for the edge~$xy$, namely
	\begin{align*}
		R_W & =\bigl\{xy\in E_P\colon x\in X',\ y\in Y,
		\tand |N_Q(x)\cap N_H(x, y)\cap W|< \tfrac 7{16} dp\zeta |W|/\l\bigr\} \\
		R_Z & =\bigl\{xy\in E_P\colon x\in X,\ y\in Y',
		\tand |N_Q(y)\cap N_H(x, y)\cap Z|< \tfrac 7{16} dp\zeta |Z|/\l\bigr\}\,.
	\end{align*}

	To show that these sets are small, we look at the~$\ee$-partite subgraph $T\subseteq P[W, X, Y]$ with
	$T[W, X]=Q[W, X]$ whose edges from~$X$ to~$Y$ are the pairs in~$R_W$. By the definition
	of~$X'$ each pair~$xy \in R_W$ satisfies
	\[
		|N_Q(x)\cap N_H(x, y)\cap W|< \tfrac 78 dp \,|N_Q(x)|\,,
	\]
	whence $\dee(H\,|\,T)<\frac 78 dp\le \dee(H\,|\,P[W, X, Y])-\delta p$.
	Owing to the $(\delta, p)$-regularity of~$H$ with respect to $P[W, X, Y]$
	this implies
	\[
		|\ccKee(T)|
		\le \delta |\ccKee(P[W, X, Y])|
		\le \tfrac43 \delta |W||X||Y|/\l^2\,.
	\]
	On the other hand, each pair $xy\in R_W$ participates in at
	least $\frac12 \zeta |W|/\l$ triples from~$\ccKee(T)$, so
	it follows that $|R_W|\le \frac83 \delta\zeta^{-1}|X||Y|/\l$.
	Arguing similarly for~$R_Z$ and using our upper bound on~$\delta$, we infer
	\[
		\max\{|R_W|, |R_Z|\}\le \tfrac 1{96}\zeta^2 |X||Y|/\l\,.
	\]

	Thus the graph
	\[
		S=P[X', Y']-(R_W\cup R_Z)
	\]
	has size at least
	\begin{align*}
		e(S)
		 & \ge
		e(X', Y')-|R_W|-|R_Z|
		\ge
		\frac{|X'||Y'|}{\l}-\eps |X||Y|-\frac{\zeta^2|X||Y|}{48\l} \\
		 & \ge
		\left(\frac{49}{256}-\frac 1{256}-\frac 1{48}\right)\frac{\zeta^2|X||Y|}{\l}
		=
		\frac{\zeta^2|X||Y|}{6\l}\,.
	\end{align*}

	Each edge $x'y'\in E(S)$ is in at least $(\frac 7{16}dp\zeta/\l)^2 |W||Z|$
	paths $wx'y'z$ of the desired kind, because, by the definition of~$R_W$, there
	are at least $\frac 7{16}dp\zeta |W|/\l$ possibilities for~$w$ and, similarly,
	there are at least $\frac 7{16}dp\zeta |Z|/\l $ possibilities for~$z$.
	In total, the number of
	paths provided by this argument is indeed at least
	\[
		\frac{\zeta^2|X||Y|}{6\l}\cdot \frac{7\zeta dp |W|}{16\l} \cdot \frac{7 dp\zeta |Z|}{16\l} >\theta p^2 |W||X||Y||Z|\,. \qedhere
	\]
\end{proof}

Finally, we deduce the sparse counting lemma for~$\Cs$ from Lemmata~\ref{lem:42} and~\ref{lem:43}.

\begin{proof}[Proof of Proposition~\ref{prop:counting}]
	Fix auxiliary constants
	\[
		\zeta=\frac{d^2\eta}{65} \qqand \theta=\frac{\zeta^4d^2}{2^5\l^3}\,.
	\]
	Since $d, \eta\le 1$, we have $\zeta\le 1$, and our assumptions on~$\delta$
	and~$\eps$ imply the hypotheses of Lemmata~\ref{lem:42} and~\ref{lem:43}
	for this value of~$\zeta$.

	Let~$H$ be a~$K_{1,2,2}$-free hypergraph on~$n$ vertices admitting
	a $(\tau,t,\l,\eps)$-equitable partition~$\ccP$ such that for
	some $p\ge n^{-1/2}$ the reduced quintuple system $\fQ(\delta, d, p)$
	contains at least $\eta\l^7 t^7$ distinct~$\Cs$-conspiracies.
	Let $V_0\dcup V_1\dcup\cdots\dcup V_t$ be the vertex partition
	underlying~$\ccP$, and set
	\[
		m
		=|V_1|
		=\dots
		=|V_t|
		=\frac{n-|V_0|}{t}\,.
	\]
	If such a $\Cs$-conspiracy
	$(i_0, \dots,i_6,j_{01},\dots, j_{60})$ satisfies $i_r=i_s$ for two distinct indices~$r$ and~$s$,
	then $\{r, s\}$ is either~$\{1, 6\}$ or of the form $\{k, k+3\}$, which leaves eight possibilities.
	Consequently, at most $8\l^7 t^6$ of those~$\Cs$-conspiracies have the property that two
	of their first seven indices coincide.
	By our lower bound $t\ge 16/\eta$, it follows that the set~$\fC$ of~$\Cs$-conspiracies
	$(i_0, \dots, j_{60})$ in $\fQ(\delta, d, p)$ with $|\{i_0, \dots, i_6\}|=7$ satisfies
	\[
		|\fC|
		\ge
		\eta \l^7 t^7-8\l^7 t^6
		=
		(\eta-8/t) \l^7 t^7
		\ge
		\tfrac 12\eta \l^7 t^7\,.
	\]
	The pigeonhole principle yields eight indices $i_0, i_2, i_3, i_4, i_5\in [t]$
	and $j_{23}, j_{34}, j_{45}\in [\l]$ common to at least $\frac 12\eta \l^4 t^2$ different
	$\Cs$-conspiracies in~$\fC$. This means
	\begin{equation}
		\label{eq:2347}
		(i_2, i_3, i_4, j_{23}, j_{34}), (i_3, i_4, i_5, j_{34}, j_{45})\in \fQ(\delta, d, p)\,,
	\end{equation}
	and that the set
	\[
		\fD=\bigl\{\big((i_1, j_{01}, j_{12}), (i_6, j_{56}, j_{60})\big) \in \bl[t]\times [\l]\times [\l]\br^2 \colon
			(i_0, \dots,i_6,j_{01},\dots, j_{60})\in \fC\bigr\}
	\]
	satisfies $|\fD|\ge \frac 12\eta \l^4 t^2$. The reason for this
	particular ordering of the entries of the sextuples in~$\fD$ is that~$\fD$ is
	`almost' the Cartesian product of the set of possible triples~$(i_1, j_{01}, j_{12})$
	with the set of possible triples~$(i_6, j_{56}, j_{60})$. In fact, the only difference
	between~$\fD$ and this product is that the sextuples in~$\fD$ need to
	satisfy $i_1\ne i_6$. The next step in the argument allows us to process the triples
	$(i_1, j_{01}, j_{12})$ and $(i_6, j_{56}, j_{60})$ independently.

	For every partition $[t]\sm\{i_0, i_2, i_3, i_4, i_5\}=T_1\dcup T_6$ the sets
	\begin{align*}
		M_1(T_1) & =\bigl\{(i_1, j_{01}, j_{12})\in T_1\times [\l]^2\colon
		(i_0, i_1, i_2, j_{01}, j_{12}), (i_1, i_2, i_3, j_{12}, j_{23})\in \fQ(\delta, d, p)\bigr\}\,, \\
		M_6(T_6) & =\bigl\{(i_6, j_{56}, j_{60})\in T_6\times [\l]^2\colon
		(i_4, i_5, i_6, j_{45}, j_{56}), (i_5, i_6, i_0, j_{56}, j_{60})\in \fQ(\delta, d, p)\bigr\}
	\end{align*}
	satisfy $M_1(T_1)\times M_6(T_6)\subseteq\fD$. Conversely, every sextuple in~$\fD$
	belongs to this product for one quarter of the partitions
	$[t]\sm\{i_0, i_2, i_3, i_4, i_5\}=T_1\dcup T_6$.
	Consequently, we can fix one such partition such that the sets $M_1=M_1(T_1)$
	and $M_6=M_6(T_6)$ satisfy
	\begin{equation}
		\label{eq:1337}
		|M_1||M_6|
		\ge \tfrac14 |\fD|
		\ge \tfrac 18\eta \l^4 t^2\,.
	\end{equation}

	Setting $A_1=\bigdcup_{i_1\in T_1} V_{i_1}$ and $A_6=\bigdcup_{i_6\in T_6} V_{i_6}$,
	we will now locate $\lambda n^4$ copies of~$\Cs$ in~$H$ such that their
	vertices $v_0,\dots, v_6$ are taken from the mutually disjoint sets
	\[
		V_{i_0}, A_1, V_{i_2}, V_{i_3}, V_{i_4}, V_{i_5}, \text{ and } A_6\,.
	\]

	From~\eqref{eq:1337} and the trivial bounds
	$\max\{|M_1|, |M_6|\}\le \l^2 t$ we conclude
	\[
		\min\{|M_1|, |M_6|\}\ge \tfrac 18\eta \l^2 t\,.
	\]

	\begin{clm}\label{clm:44}
		For at least one half of the vertices $v_0\in V_{i_0}$ the sets
		\begin{align*}
			Q_{23}(v_0)
			  =\bigl\{(v_2, v_3)\in V_{i_2}\times V_{i_3}\colon{}
			 & v_2v_3\in E(P^{i_2i_3}_{j_{23}})\text{ and}\\
			 & \text{for some }v_1\in A_1\text{ there is a tight path }v_0v_1v_2v_3\text{ in }H\bigr\}\,, \\
			 \intertext{and}
			Q_{45}(v_0)
			  =\bigl\{(v_4, v_5)\in V_{i_4}\times V_{i_5}\colon{}
			 & v_4v_5\in E(P^{i_4i_5}_{j_{45}})\text{ and} \\
			 & \text{for some }v_6\in A_6\text{ there is a tight path }v_0v_6v_5v_4\text{ in }H\bigr\}
		\end{align*}
		satisfy $\min\{|Q_{23}(v_0)|, |Q_{45}(v_0)|\}\ge \zeta m^2/\l$.
	\end{clm}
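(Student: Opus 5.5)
The plan is to apply Lemma~\ref{lem:42} once for every triple in~$M_1$ (and, symmetrically, for every triple in~$M_6$), to average over~$v_0$, and finally to pass from tight paths to pairs $(v_2,v_3)$ using $K_{1,2,2}$-freeness.

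\emph{Step 1: one application of Lemma~\ref{lem:42} per triple.} Fix $(i_1,j_{01},j_{12})\in M_1$. Consider the $4$-partite graph with
\[
	W=V_{i_0},\quad X=V_{i_1},\quad Y=V_{i_2},\quad Z=V_{i_3},
\]
whose three bipartite parts are $P^{i_0i_1}_{j_{01}}$, $P^{i_1i_2}_{j_{12}}$, and $P^{i_2i_3}_{j_{23}}$. These are $(\eps,1/\l)$-regular. The two memberships $(i_0,i_1,i_2,j_{01},j_{12})\in\fQ(\delta,d,p)$ and $(i_1,i_2,i_3,j_{12},j_{23})\in\fQ(\delta,d,p)$ give exactly the regularity and density hypotheses of Setup~\ref{setup}. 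The constraints on~$\eps$ and~$\delta$ in Proposition~\ref{prop:counting} imply those of Lemma~\ref{lem:42}. Hence at least $\frac78 m$ vertices $v_0\in V_{i_0}$ are \emph{good} for this triple, meaning they start at least $\frac14 d^2p^2m^3/\l^3$ tight paths $v_0v_1v_2v_3$ that follow these three bipartite graphs.

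For~$M_6$ I apply the same lemma with $W=V_{i_0}$, $X=V_{i_6}$, $Y=V_{i_5}$, $Z=V_{i_4}$. Here I use that a $\ee$-partite graph and its reversal have the same triples~$\ccKee$. Consequently the quintuple $(i_5,i_6,i_0,j_{56},j_{60})$ yields regularity of~$H$ with respect to $P[W,X,Y]$ as required.

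\emph{Step 2: averaging over~$v_0$.} The number of pairs $(v_0,\text{triple})$ with $v_0$ not good is at most $\frac18 m|M_1|$. By Markov's inequality, at most $m/4$ vertices~$v_0$ fail to be good for at least half of~$M_1$. The same holds for~$M_6$. So at least $m/2$ vertices~$v_0$ are good for at least half of~$M_1$ and for at least half of~$M_6$; I fix such a~$v_0$.

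\emph{Step 3: counting tight paths.} Tight paths obtained from different triples of~$M_1$ are distinct. Indeed, $v_1$ determines~$i_1$, and the pairs $v_0v_1$ and $v_1v_2$ determine~$j_{01}$ and~$j_{12}$, because the $P^{ij}_\alpha$ partition $K[V_i,V_j]$. Using $|M_1|\ge\frac18\eta\l^2t$, the number of tight paths $v_0v_1v_2v_3$ with $v_1\in A_1$ and $v_2v_3\in E(P^{i_2i_3}_{j_{23}})$ is therefore at least
\[
	\frac{\eta\l^2t}{16}\cdot\frac{d^2p^2m^3}{4\l^3}=\frac{\eta d^2p^2m^3t}{64\,\l}.
\]

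\emph{Step 4: from tight paths to pairs $(v_2,v_3)$.} By the observation from the introduction (Figure~\ref{fig:K122free}), for fixed $v_0,v_2,v_3$ at most one~$v_1$ makes $v_0v_1v_2v_3$ a tight path in the $K_{1,2,2}$-free~$H$. Hence $|Q_{23}(v_0)|$ is at least the number of tight paths from Step~3. Now use $p^2\ge 1/n$ together with
\[
	mt=n-|V_0|\ge(1-\tau)n\ge\tfrac{64}{65}\,n.
\]
This gives $|Q_{23}(v_0)|\ge \frac{\eta d^2 m^2}{65\l}=\zeta m^2/\l$. The same argument gives the bound for $Q_{45}(v_0)$, which proves the claim.

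The only delicate point is Step~4. It is where the sparse factor $p^2$ must be cancelled, and this needs both the uniqueness of~$v_1$ coming from $K_{1,2,2}$-freeness and the disjointness of paths across different triples. The remaining steps are bookkeeping.
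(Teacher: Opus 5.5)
Your proposal is correct and follows the paper's proof essentially step for step. The paper likewise applies Lemma~\ref{lem:42} once per triple in $M_1$ (and symmetrically in $M_6$), and double counts to get at least $\frac34 m$ vertices $v_0$ lying in the good set for at least half of the triples. It then observes that paths coming from different triples are distinct, uses $K_{1,2,2}$-freeness to make $(v_2,v_3)$ determine $v_1$, and finishes with the same arithmetic $p^2mt\ge 1-\tau\ge\frac{64}{65}$. Your explicit check that reversing the orientation of the triads for $M_6$ preserves the sets of paths, the densities and the regularity is a detail the paper covers only with ``by symmetry''.
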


	\begin{proof}
		By symmetry, it suffices to show that at least three quarters of the vertices
		$v_0\in V_{i_0}$ satisfy $|Q_{23}(v_0)|\ge \zeta m^2/\l$.
		To this end, we observe that for every triple $s=(i_1, j_{01}, j_{12})\in M_1$
		Lemma~\ref{lem:42}
		applied to the graph
		$P_s=P^{i_0i_1}_{j_{01}}\cup P^{i_1i_2}_{j_{12}}\cup P^{i_2i_3}_{j_{23}}$
		shows that the set $W(s)$ of all~${v_0\in V_{i_0}}$ such that there are at least
		$\frac14d^2p^2m^3/\l^3$ triples $(v_1, v_2, v_3)$ for which $v_0v_1v_2v_3$
		is a path in~$P_s$ and a tight path in~$H$ satisfies $|W(s)|\ge\frac 78m$.
		Now a simple double counting argument applied to the set
		\[
			X_{i_0} = \big\{v_0\in V_{i_0}\colon \text{there are at least $|M_1|/2$ triples $s\in M_1$ with $v_0\in W(s)$}\big\}
		\]
		yields
		\[
			\tfrac78 |M_1|m
			\le \sum_{s\in M_1} |W(s)|
			\le
			(m-|X_{i_0}|)\tfrac 12|M_1|+|X_{i_0}||M_1|\,,
		\]
		whence $\frac 34m\le |X_{i_0}|$. So it suffices to show that we have
		$|Q_{23}(v_0)|\ge \zeta m^2/\l$ for each $v_0\in X_{i_0}$.

		Let $v_0\in X_{i_0}$. Since at least $\frac12 |M_1|$ triples $s\in M_1$
		satisfy $v_0\in W(s)$, they contribute
		\[
			\frac{|M_1|}2\cdot \frac{d^2p^2m^3}{4\l^3}
			\ge
			\frac{\eta p^2d^2m^3t}{64\l}
			\ge
			\frac{d^2 \eta m^2}{65\l}
		\]
		tight paths $v_0v_1v_2v_3$ in~$H$, where we used
		$p^2mt\ge mt/n\ge 1-\tau\ge \frac{64}{65}$.
		These paths are mutually distinct, since~$\ccP$ partitions the pairs
		between any two vertex classes, whence the triple~$s$ can be recovered
		from such a path.
		So it remains to observe that no two such paths can end in the same
		pair~$v_2v_3$. This is because if $v_0v_1v_2v_3$ and
		$v_0v'_1v_2v_3$ were two tight paths in~$H$ with $v_1\ne v'_1$,
		then~$v_0v_1v_3v'_1$ would be a~$4$-cycle in the link of~$v_2$.
	\end{proof}

	Now for each $v_0\in V_{i_0}$ as obtained by Claim~\ref{clm:44}, we apply
	Lemma~\ref{lem:43} to the graph
	$P^{i_2i_3}_{j_{23}}\cup P^{i_3i_4}_{j_{34}}\cup P^{i_4i_5}_{j_{45}}$,
	for which Setup~\ref{setup} is guaranteed by~\eqref{eq:2347}, and to the
	subgraph of $P^{i_2i_3}_{j_{23}}\cup P^{i_4i_5}_{j_{45}}$ whose edges are the
	pairs in $Q_{23}(v_0)\cup Q_{45}(v_0)$. This yields at least
	$\theta m^4p^2$ quadruples $(v_2, v_3, v_4, v_5)$ such that $(v_2, v_3)\in Q_{23}(v_0)$,
	$(v_4, v_5)\in Q_{45}(v_0)$, and $v_2v_3v_4v_5$ is a tight path in~$H$.

	Summarising this discussion, we have found at least $\frac m2$ vertices $v_0\in V_{i_0}$
	for which there are at least $\theta m^4p^2$ copies $v_0v_1v_2v_3v_4v_5v_6$ of~$\Cs$ in~$H$. Hence, this argument yields at least
	\[
		\frac{\theta p^2m^5}{2}
		\ge
		\frac{\theta p^2(1-\tau)^5n^5}{2t^5}
		\ge
		\frac{d^{10}\eta^4 n^4}{2^{31}\l^3 t^5}
		=
		\lambda n^4
	\]
	copies of~$\Cs$.
\end{proof}

\section{Removal of conspiracies}
\label{sec:reduced-removal}

In this section we prove Proposition~\ref{prop:reducedremoval}. We require a
partite version of the hypergraph removal lemma (Theorem~\ref{thm:graphremovallemma}), which deals
with the following kind of situation. Suppose that an~$r$-uniform
hypergraph~$F$ with vertex set~$[s]$ and an~$s$-partite, $r$-uniform
hypergraph~${H=(V,E)}$ with distinguished vertex partition
$V=V_1\dcup\cdots\dcup V_s$ are given.
By a \emph{transversal copy} of~$F$ in~$H$ we mean a homomorphism~$\psi$
from~$F$ to~$H$ satisfying $\psi(i)\in V_i$ for every~${i\in [s]}$.

\begin{thm}[Partite removal lemma]\label{thm:partite}
	Given an~$r$-uniform hypergraph~$F$ with vertex set~$[s]$ and $\eps>0$, there
	is some $\delta>0$ such that the following holds. 
	
	If an~$s$-partite, $r$-uniform
	hypergraph~$H=(V,E)$
	with distinguished vertex partition $V=V_1\dcup\cdots\dcup V_s$ contains at most
	$\delta\prod_{i=1}^s |V_i|$ transversal copies of~$F$, then there is a
	set of hyperedges $E'\subseteq E(H)$ such that $H-E'$ contains no transversal copies of~$F$
	and every $e\in E(F)$ satisfies
	$\big|\big\{f\in E'\colon f\subseteq \bigdcup_{i\in e} V_i\big\}\big|\le \eps\prod_{i\in e} |V_i|$. \qed
\end{thm}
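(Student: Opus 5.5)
The partite removal lemma (Theorem~\ref{thm:partite}) is stated with a \qed and no proof, so it is presumably meant to be deduced from the hypergraph removal lemma, Theorem~\ref{thm:graphremovallemma}. The plan is to apply Theorem~\ref{thm:graphremovallemma} to $H$ itself after stripping away every hyperedge that cannot play the r\^ole of an edge of~$F$ in a transversal copy. Note that Theorem~\ref{thm:graphremovallemma} counts copies, i.e.\ injective embeddings, whereas transversal copies are partition-respecting homomorphisms. Since the classes $V_1,\dots,V_s$ are disjoint and $\psi(i)\in V_i$, every transversal homomorphism is automatically injective. So the real issues are two: the possibly unbalanced class sizes, and the mismatch between ``copies'' and ``transversal copies''.

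\emph{Step 1 (restricting the hyperedges).} First I delete from~$H$ every hyperedge that is not of the form $\{x_j\colon j\in e\}$ with $x_j\in V_j$ for some $e\in E(F)$. This destroys no transversal copy and costs nothing in the required bound. Call the result~$H_0$.

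\emph{Step 2 (balancing the classes).} I would blow up~$H_0$. Let $N=\prod_i|V_i|$ and replace each vertex of~$V_i$ by $N/|V_i|$ clones, so that every class has exactly $N$ vertices, and a hyperedge of type~$e$ is replaced by all its clone combinations. Transversal copies of~$F$ in the blow-up $H^*$ correspond to transversal copies in~$H_0$ with multiplicity $\prod_i (N/|V_i|)$. Hence their number is at most $\delta N^s$. In the opposite direction, a hyperedge set of type~$e$ of density at most $\eps$ in~$H^*$ should descend to one of density at most~$\eps$ in~$H_0$. I would achieve this not by projecting the deletion set, but by averaging: choose a random clone layer for each original vertex, or delete in~$H_0$ those hyperedges a large fraction of whose clones were deleted. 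Then surviving copies in~$H_0$ lift to copies in~$H^*$ avoiding the deletions with positive fraction, so every surviving copy can be destroyed by a threshold argument. Concretely, delete $f$ if at least a $1/(2e(F))$-fraction of its clones lies in~$E^*$. A copy in $H_0-E'$ then has, for each of its edges, fewer than a $1/(2e(F))$-fraction of clone-edges deleted, and a union bound over its $e(F)$ edges leaves a surviving lifted copy, which is a contradiction. The density loss is only a factor $2e(F)$, which is absorbed into~$\eps$.

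\emph{Step 3 (applying the dense lemma).} Now $H^*$ has $n=sN$ vertices and equal classes. Every copy of~$F$ in~$H^*$ contains, through its type structure, information about the classes, but a non-transversal copy could use a class twice or permute classes. To handle this I would count all copies. Since every hyperedge of~$H^*$ has a type, every copy $\phi$ induces a map $[s]\to[s]$ that sends edges of~$F$ to edges of~$F$, i.e.\ an endomorphism~$\sigma$ of~$F$. The main obstacle is bounding copies whose~$\sigma$ is not the identity. The remedy I would try first is to pass to~$F$'s colour-marked version. Replace~$F$ by the $r$-uniform hypergraph~$F^+$ obtained by attaching to each vertex~$i$ a distinct gadget, and attach the corresponding gadget to each class~$V_i$ of~$H^*$ (the gadgets use new vertices and edges, complete between the class and a small new vertex set). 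Then copies of~$F^+$ correspond to transversal copies up to a bounded factor. The simplest alternative is to use the standard colored form of the removal lemma, obtained from Theorem~\ref{thm:graphremovallemma} by the same proof, and cite it.

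Finally, applying Theorem~\ref{thm:graphremovallemma} with $\eps'=\eps N^{-r}\cdot$const yields a deletion set of size at most $\eps' n^r$. This gives density at most~$\eps$ on each type once constants are adjusted, and the result transfers back via Step~2.
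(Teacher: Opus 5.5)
Your Steps~1 and~2 are sound. The blow-up to equal classes, the thresholded descent (delete $f$ once a $1/(2e(F))$-fraction of its clones is deleted upstairs), and the random-lift plus union-bound argument are all correct. They play the r\^ole of the \emph{standard blow-up argument} the paper mentions. There are two slips. First, $\eps'$ must be a constant such as $\eps/(2e(F)s^r)$, not $\eps N^{-r}\cdot$const, since otherwise your $\delta$ depends on~$N$. Second, you should blow up by a further factor so that $sN\ge n_0$.

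The genuine gap is Step~3, which carries the whole content of the statement. Theorem~\ref{thm:graphremovallemma} needs few copies of~$F$ \emph{altogether}, and copies induced by endomorphisms $\sigma\ne\mathrm{id}$ can be abundant even when there are no transversal copies. For instance, take $r=2$, let $F$ be the path with edges $12$ and $23$, and let $H$ be complete between $V_1,V_2$ and empty between $V_2,V_3$. There are no transversal copies, but $\Theta(N^3)$ paths with centre in~$V_1$ and both ends in~$V_2$.

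Your gadget remedy does not repair this as described. If the new vertex sets have bounded size, there are only $o(n^r)$ gadget edges. The deletion set returned by Theorem~\ref{thm:graphremovallemma} may then contain all of them, after which a surviving transversal copy of~$F$ no longer extends to a copy of~$F^+$, so you get no contradiction. If the new sets have linear size and are complete to~$V_i$, the folding problem simply reappears for~$F^+$:
\begin{itemize}
\item With pendant-star gadgets, the folded paths above extend in $\Theta(n)$ ways at every leaf, giving $\Theta(n^{v(F^+)})$ copies of~$F^+$.
\item With clique gadgets of distinct sizes exceeding~$s$, all of~$F^+$ maps homomorphically into the largest clique gadget. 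Its host version is a dense complete $r$-partite hypergraph, so again there are $\Theta(n^{v(F^+)})$ copies.
\end{itemize}
The claim that copies of~$F^+$ correspond to transversal copies up to a bounded factor is exactly what needs proof. Designing gadgets that rigidify every homomorphism of~$F^+$ into the host pattern is the missing idea. Your fallback, citing the coloured form of the removal lemma, is circular, because that coloured form is Theorem~\ref{thm:partite} itself.

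The paper does not attempt a black-box reduction to Theorem~\ref{thm:graphremovallemma}. It cites Tao (Corollary~1.14) and notes that the regularity proof of the non-partite lemma can be run directly in the partite setting. There one regularises $H$ together with its given partition and applies the counting lemma only to transversal copies, so non-transversal copies never enter. A blow-up, or an adjustment of~$\delta$, is used only to remove a lower bound on the class sizes. So your Steps~1--2 are a reasonable preliminary, but the core step has to be that partite regularity-and-counting argument, not an application of the non-partite lemma.
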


This version of the removal lemma was stated explicitly by Tao~\cite{Tao}*{Corollary 1.14}.
Note that it can be proved by the standard application of the hypergraph
regularity method that is familiar from the non-partite case.
Depending on how one organises the argument, it may seem that an additional assumption of the
form $|V_i|\ge \Omega_\eps(1)$ is needed. This hypothesis, however, can always be eliminated
either by adjusting~$\delta$ or by employing a standard blow-up argument. We omit the
details.

\begin{proof}[Proof of Proposition~\ref{prop:reducedremoval}]
	Let~$F$ be the~$5$-uniform hypergraph with fourteen vertices
	\[
		V(F)=\{0, 1, 2, 3, 4, 5, 6, (0,1), (1,2), (2, 3), (3, 4), (4, 5), (5, 6), (6, 0)\}
	\]
	and six hyperedges
	\[
		E(F) = \Bigl\{\bigl\{k-1, k, k+1, \bl k-1, k\br, \bl k, k+1\br\bigr\}\colon k\in\{1, 2, 3, 4, 5, 6\}\Bigr\}\,,
	\]
	where here and throughout the argument indices are read modulo~$7$.
	Identifying $V(F)$ with~$[14]$ in an arbitrary way, we apply
	Theorem~\ref{thm:partite} to~$F$ and $\eps=\rho/6$ and obtain
	some $\delta>0$. We claim that $\cdr=\delta$ has the desired property.

	To verify this, consider a $(t, \l)$-quintuple system~$\fQ$ containing at
	most $\cdr \l^7 t^7$ different~$\Cs$-conspiracies. We construct a
	$14$-partite~$5$-uniform hypergraph~$\cQ$ as follows:
	\begin{enumerate}[label=\rmlabel]
		\item Each $k\in \ZZ/7\ZZ$ contributes two vertex classes
		      \[
			      V_k=[t]\times \{k\} \qand V_{k, k+1}=[\l]\times \{(k, k+1)\}
		      \]
		      to~$\cQ$.
		\item For every quintuple $(u, v, w, \alpha, \beta)\in \fQ$
		      and every non-zero $k\in \ZZ/7\ZZ$, we put the hyperedge
		      \[
			      \bigl\{(u, k-1), (v, k), (w, k+1), \bl\alpha, (k-1, k)\br, \bl\beta, (k, k+1)\br\bigr\}
		      \]
		      into $E(\cQ)$.
	\end{enumerate}

	Every~$\Cs$-conspiracy
	\[
		(i_0, i_1, i_2, i_3, i_4, i_5, i_6, j_{01}, j_{12}, j_{23}, j_{34}, j_{45}, j_{56}, j_{60})
	\]
	in~$\fQ$ gives rise to a transversal copy of~$F$ in~$\cQ$ whose vertices are
	all $(i_k, k)\in V_k$ and all $\bl j_{k, k+1}, (k, k+1)\br\in V_{k, k+1}$
	with $k\in \ZZ/7\ZZ$. Conversely, every transversal copy of~$F$ in~$\cQ$ corresponds
	to a $\Cs$-conspiracy in~$\fQ$. Therefore, $\cQ$ contains at most $\cdr \l^7t^7$
	transversal copies of~$F$. Since the fourteen vertex classes of~$\cQ$ satisfy
	$\prod_{i=1}^{14}|V_i|=\l^7t^7$, our choice of~$\cdr$ yields a set
	$E'\subseteq E(\cQ)$ such that $\cQ-E'$ has no transversal copies of~$F$
	and for every non-zero $k\in\ZZ/7\ZZ$ the set
	\begin{multline*}
		\fQ'_k=\Bigl\{(u, v, w, \alpha, \beta)\in [t]^3\times [\l]^2\colon \\
		\bigl\{(u, k-1), (v, k), (w, k+1), \bl\alpha, (k-1, k)\br,
		\bl\beta, (k, k+1)\br\bigr\}\in E'\Bigr\}
	\end{multline*}
	satisfies $|\fQ'_k|\le \rho \l^2t^3/6$. Since every hyperedge of~$\cQ$ arises
	from a quintuple in~$\fQ$, the set~$\fQ'=\bigcup_{k\ne 0}\fQ'_k$ is a subset
	of~$\fQ$ of size $|\fQ'|\le \rho\l^2 t^3$.

	Finally, if $\fQ\sm \fQ'$ contained a $\Cs$-conspiracy, then for every
	non-zero $k\in\ZZ/7\ZZ$ the quintuple $(i_{k-1}, i_k, i_{k+1}, j_{k-1, k}, j_{k, k+1})$
	would lie outside~$\fQ'_k$, so that none of the six corresponding hyperedges
	of~$\cQ$ belongs to~$E'$. Thus this conspiracy would still yield a transversal
	copy of~$F$ in $\cQ-E'$, which is impossible.
\end{proof}

\section{Proof sketch of the sparse regularity lemma}\label{app:regularity}
The proof of Proposition~\ref{prop:regularity} follows along the lines of the proof of the hypergraph regularity lemma in~\cite{FR02} and uses ideas from~\cite{scott}.

Since the hypergraph~$H$ in Proposition~\ref{prop:regularity} is sparse, we
measure the progress of the regularisation by an index taken from~\cite{scott}.
To define it, fix $D\ge 1$ and consider the function
\begin{align*}
	\phi(x) = \begin{cases}
		          x^2
		           & \text{ if } x\le 2D, \\
		          4D(x-D)
		           & \text{ if } x\ge 2D.
	          \end{cases}
\end{align*}
Note that~$\phi$ is convex (see~\cite{scott}) and that $\phi(x) \le 4Dx$ for
every $x\ge 0$.
For~$\delta$ given by Proposition~\ref{prop:regularity} we will set 
\[
	D = \frac{48}{\delta^2}
\] 
for the remainder of this section.
\begin{dfn}[Index]
	Given a hypergraph~$H$ on~$n$ vertices, $p\in (0,1)$, and any $(\tau,t,\l)$-partition of the pairs
	\[
		\ccP = \big\{P^{ij}_\alpha\colon \alpha\in [\l]\tand ij\in [t]^{(2)}\big\}
	\]
	and
	\[
		\Triad(\ccP) = \big\{(P^{ij}_\alpha,P^{jk}_\beta)\colon i,j,k\in[t]\text{ are pairwise distinct} \tand \alpha,\beta\in [\l]\big\}\,,
	\]
	let
	\[
		\Ind(H;\ccP) = \frac{1}{6n^3}\sum_{P\in \Triad(\ccP)}\phi\left(\frac{\dee(H\,|\,P)}{p}\right)|\ccKee(P)|\,.
	\]
\end{dfn}
The factor~$1/6$ accounts for the ordering of the~$\ee$-triads: every hyperedge whose vertices lie in three distinct non-exceptional vertex classes occurs in exactly six members of~$\Triad(\ccP)$. In particular,
\[
	\sum_{P\in\Triad(\ccP)}e_{\ee}(H\,|\,P)\le 6e(H)\,.
\]
Since $\phi(x)\le 4Dx$ and, for~$n$ sufficiently large, every~$K_{1,2,2}$-free hypergraph on~$n$ vertices has at most~$n^{5/2}$ hyperedges, for any~$\ccP$ we have
\[
	\Ind(H;\ccP)
	\le \frac{4D}{6pn^3}\sum_{P\in\Triad(\ccP)}e_{\ee}(H\,|\,P)
	\le 4D \frac{e(H)}{pn^{3}}
	\le 4D\,.
\]
The proof of Proposition~\ref{prop:regularity} follows by iterating the following index increment lemma.
\begin{lemma}
	\label{lem:indinc}
	Let $\tau>0$ and $\delta\in(0,1/4]$, and let
	$\eps\colon \NN\lra (0,1]$ satisfy $\eps(x)\le(8x^2)^{-1}$
	for every $x\in\NN$. Let~$t$, $\l$ be positive integers with
	$\tau+\delta2^{-\l}\le 1/4$. Then there exist positive integers~$T$, $L$, and~$N_0$ such that for every $n\ge N_0$ and $p = n^{-1/2}$
	the following holds.

	Given a~$K_{1,2,2}$-free~$3$-uniform hypergraph~$H$ and a $(\tau,t,\l,\eps(\l))$-equitable partition~$\ccP$ with at least $\delta\l^2t^3$ $\ee$-triads that are not $(\delta,p)$-regular with respect to~$H$, there exists a $(\tau+\delta\cdot 2^{-\l},t_1,\l_1,\eps(\l_1))$-equitable partition~$\ccP_1$ such that
	\[
		\Ind(H;\ccP_1) \ge \Ind(H;\ccP) + \frac{\delta^4}{48} - \frac{20D}{\sqrt{\l2^{2t\l}}}
	\]
	and $t\le t_1\le T$ and $\l< \l_1\le L$.
\end{lemma}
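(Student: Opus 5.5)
We follow the standard index-increment scheme of~\cite{FR02}, with the sparse energy of~\cite{scott} in place of the mean square density. For every $\ee$-triad $P=P^{ijk}_{\alpha\beta}$ with respect to which~$H$ is not $(\delta,p)$-regular, we fix a witness $Q_P\subseteq P$, i.e.\ an $\ee$-partite subgraph with $|\ccKee(Q_P)|\ge\delta|\ccKee(P)|$ and $|\dee(H\,|\,Q_P)-\dee(H\,|\,P)|>\delta p$. Each witness contributes two bipartite graphs, $Q_P[V_i,V_j]\subseteq P^{ij}_\alpha$ and $Q_P[V_j,V_k]\subseteq P^{jk}_\beta$. Fix a pair $ij$ and a class $\alpha$. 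We refine~$P^{ij}_\alpha$ by the Venn diagram of all witness graphs living inside it. There are at most $2t\l$ such graphs, because~$P^{ij}_\alpha$ can appear as the first or second part of a triad with at most $t\l$ partners each, so the refinement has at most $2^{2t\l}$ cells. Next we apply the dense graph regularity lemma (in its equitable, partition-of-pairs form, with a refinement of the vertex classes to $t_1$ classes) to all these cells simultaneously, with an accuracy depending on $\eps(\cdot)$ evaluated at the eventual number of classes. After this we re-equalise, so that every pair $V'_{i'}V'_{j'}$ is partitioned into exactly $\l_1$ bipartite graphs that are $(\eps(\l_1),1/\l_1)$-regular. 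The standard way to do this is to chop each regular cell of density~$d$ into $\lfloor d\l_1\rfloor$ random pieces of density $1/\l_1$. The leftover, whose density is below $1/\l_1$ per cell, is pushed into the exceptional set~$V_0$, or, more precisely, we discard vertices incident to too many leftover pairs. Choosing $\l_1$ large in terms of $2^{2t\l}$ and~$\l$ keeps the loss of vertices below $\delta2^{-\l}n$, which explains the new exceptional bound $\tau+\delta2^{-\l}$. The bounds~$T$ and~$L$ come from the regularity lemma and depend only on $t,\l,\delta,\eps(\cdot)$.

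The index computation goes as follows. For each old triad~$P$, the new triads inside it, namely the triads of~$\ccP_1$ whose pairs lie in $P^{ij}_\alpha$ and $P^{jk}_\beta$ between refined classes, almost partition $\ccKee(P)$. By the dense counting lemma, their $\ccKee$-sizes are $(1\pm o(1))m_1^3/\l_1^2$, so the weighted average of their relative densities $\dee/p$ is $\dee(H\,|\,P)/p$ up to an error. Convexity of~$\phi$ (Jensen) then shows that the index does not decrease on any triad, up to error terms. For an irregular triad~$P$, the witness $Q_P$ is, up to $\eps$-errors, a union of new triads. Splitting into the parts inside and outside $Q_P$ therefore gives a gain. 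Here we need~$\phi$ to behave quadratically. If $\dee(H\,|\,P)/p\le D$ and the deviation is $\delta$, the variance gain is at least $\delta\cdot\delta^2\cdot|\ccKee(P)|$ roughly. Summing over at least $\delta\l^2t^3$ irregular triads, each with $|\ccKee(P)|\approx m^3/\l^2$, and normalising by $6n^3$ yields about $\delta^4/48$.

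The main obstacle is twofold and lies in the sparse setting. First, for triads whose density exceeds $2Dp$, $\phi$ is linear and Jensen gives no gain. We handle this with the crowded-triad bound. Proposition~\ref{prop:crowded} with $C=D$ shows that triads of density at least~$Dp$ carry at most $2pn^3/D$ hyperedges. Hence at most $O(\l^2t^3/D)$ of them are non-negligible, which is less than $\delta\l^2t^3/2$ for $D=48/\delta^2$. So at least half of the irregular triads have density below~$Dp$. For these, both sub-densities stay in the quadratic region as long as they are at most $2D$. Otherwise the gain from the linear piece together with the quadratic piece is still at least the required amount, by the standard computation from~\cite{scott}. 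Second, the error terms must be controlled. Since hyperedge densities are of order~$p$, multiplicative errors in $|\ccKee|$ coming from graph regularity only enter multiplied by $\phi(\cdot)\le4D\,\dee/p$. The total is then bounded by $4D$ times the relative error, and the relative error from the Venn-diagram cells of size $2^{-2t\l}$ together with the equalisation is of order $(\l2^{2t\l})^{-1/2}$. With a constant of~$20$ for the various sources, this yields the claimed error term $20D/\sqrt{\l2^{2t\l}}$. Finally, we choose $N_0$ so that all rounding and the counting lemmas apply on classes of size $n/T$.
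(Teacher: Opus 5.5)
Your outline (cutting each $P^{ij}_\alpha$ along the at most $2t\l$ witness graphs, multicolour graph regularity, slicing, convexity of~$\phi$) matches the paper's architecture. However, the step where sparseness actually matters is not handled.

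You bound the index loss by ``$4D$ times the relative error'' in path counts. This tacitly assumes that the hyperedges of a triad are spread over its paths in proportion to their number, and for $p=n^{-1/2}$ nothing guarantees this. The paths lost in passing to the fine partition are those meeting $W_0\sm V_0$, those through irregular fine class pairs, and those through the mixed parts created by the slicing. Their total mass can be of order $n^3/\l'$, where $\l'=\l2^{2t\l}$, and this exceeds the total number $pn^3$ of hyperedges. So a priori almost all hyperedges could sit on these paths, and then neither your ``no decrease up to errors'' step nor your gain on the witnesses $Q_P$ follows. Correspondingly, nothing in your argument produces the square root in $20D/\sqrt{\l2^{2t\l}}$.

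The missing idea is to bound the \emph{number of hyperedges} on bad paths, using $K_{1,2,2}$-freeness. The paper proceeds as follows.
First, it obtains the gain $\delta^4/48$ exactly on the intermediate partition~$\ccQ$, which has the same vertex classes and pairs cut along the witnesses.
Second, it passes to the subhypergraph~$H'$ of hyperedges all of whose pairs are good. Then Jensen gives $\Ind(H';\ccP_1)\ge\Ind(H';\ccQ)$ without error, while the $4D$-Lipschitz property of~$\phi$ gives $\Ind(H';\ccQ)\ge\Ind(H;\ccQ)-4D\bigl(e(H_{\ccQ})-e(H')\bigr)/(pn^3)$.
Third, it bounds $e(H_{\ccQ})-e(H')$ by applying Proposition~\ref{prop:crowded} to~$\ccP_1$ with $C=\sqrt{\l'}$. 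Crowded fine triads carry at most $2pn^3/C$ hyperedges. The at most $t_1n^2$ hyperedges meeting $W_0\sm V_0$ are negligible. The remaining bad hyperedges lie in fine triads of density below~$Cp$ whose total path mass is $O(n^3/\l')$, so they number $O(Cpn^3/\l')$.
Balancing $2pn^3/C$ against $Cpn^3/\l'$ is exactly where $\sqrt{\l'}$ comes from. It gives $5n^{5/2}/\sqrt{\l'}$ and hence the loss $20D/\sqrt{\l'}$.

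A K\H{o}v\'ari--S\'os--Tur\'an count in the $C_4$-free links would serve equally well: it shows that a set of $\gamma n^2$ pairs lies in at most $2\gamma n^2+\sqrt{2\gamma}\,n^{5/2}$ hyperedges. By contrast, your use of Proposition~\ref{prop:crowded} to find low-density irregular triads is unnecessary. The trivial bound $\sum_P e_{\ee}(H\,|\,P)\le 6pn^3$ suffices, as in Claim~\ref{clm:indinc2}.

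Your equalisation step also fails as described. Leftover pairs cannot be removed by discarding vertices, because they are spread over essentially all vertices. Moreover, an equitable partition must cover every pair of $K[W_i,W_j]$ by exactly $\l_1$ graphs that are $(\eps(\l_1),1/\l_1)$-regular. The paper instead observes that the leftover $\Gamma^{ij}_0$ is itself regular, being the complement of the regular slices. It slices $\Gamma^{ij}_0$ again and absorbs the tiny remainder. This produces mixed parts of total size at most $(m')^2/\l'$ per class pair, and these are precisely the parts whose hyperedges must be controlled as above. Finally, the growth $\delta2^{-\l}n$ of the exceptional set comes only from $|W_0\sm V_0|\le t_1$ in the graph regularity lemma together with $n$ being large.
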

\begin{proof}[Proof of Proposition~\ref{prop:regularity} assuming Lemma~\ref{lem:indinc}]
	By decreasing~$\tau$, $\delta$, and the values of~$\eps$, if necessary, we may assume
	that $\delta\le\tau\le 1/4$ and $\eps(x)\le (8x^2)^{-1}$ for every $x\in\NN$.
	We may also increase~$\l_0$ so that $\l_0>1$ and
	\begin{align}
		\label{eqn:regproofl0islarge}
		\frac{20D}{\sqrt{\l_0}} \le \frac{\delta^4}{96}\,.
	\end{align}

	Set $K=\lceil 2^{11}3^2\delta^{-6}\rceil+1$. Starting with
	$(t_0,\l_0,\tau/2)$, form the rooted tree of parameter triples
	reachable in at most~$K$ applications of Lemma~\ref{lem:indinc}, with one
	child for every admissible integer pair $(\l_1,t_1)$ within the bounds
	supplied at its parent. This tree is finite, since it has depth at most~$K$
	and every node has finitely many children. Let~$T_0$, $L_0$, and~$n_0$ be
	the maxima of all bounds~$T$, $L$, and~$N_0$ occurring in the tree, and
	increase~$n_0$
	if necessary so that an initial
	$(\tau/2,t_0,\l_0,\eps(\l_0))$-equitable partition exists.

	Now let $n\ge n_0$, set $p=n^{-1/2}$, and let~$H$ be a~$K_{1,2,2}$-free
	$3$-uniform hypergraph on~$n$ vertices. Choose such an initial partition~$\ccP_0$
	and set $\tau_0=\tau/2$. Suppose that after~$i$ steps we have constructed an
	$(\tau_i,t_i,\l_i,\eps(\l_i))$-equitable partition~$\ccP_i$, where
	\[
		\tau_i=\frac{\tau}{2}+\delta\sum_{h=0}^{i-1}2^{-\l_h}
	\]
	(with the sum empty when $i=0$). If at most $\delta\l_i^2t_i^3$ of the $\ee$-triads
	fail to be $(\delta,p)$-regular with respect to~$H$, then we are done.
	Otherwise, since the~$\l_h$ are strictly increasing integers and $\l_0>1$,
	we have $\l_h\ge h+2$, and hence
	\begin{align*}
		\tau_i+\delta2^{-\l_i}
		 & =\frac{\tau}{2}+\delta\sum_{h=0}^{i}2^{-\l_h}
		 \le \frac{\tau}{2}+\delta\sum_{h=0}^{i}2^{-(h+2)}
		=\frac{\tau}{2}+\delta\bigl(\tfrac12-2^{-(i+2)}\bigr)
		<\frac{\tau}{2}+\frac{\delta}{2}
		\le\tau
		\le\frac14\,.
	\end{align*}
	Thus Lemma~\ref{lem:indinc} applies and yields an
	$(\tau_{i+1},t_{i+1},\l_{i+1},\eps(\l_{i+1}))$-equitable partition~$\ccP_{i+1}$,
	where
	\[
		\tau_{i+1}=\tau_i+\delta2^{-\l_i},
		\qquad \l_{i+1}>\l_i,
		\qquad t_{i+1}\ge t_i\,.
	\]
	The displayed formula for~$\tau_i$ is therefore preserved at the next step.
	In particular, $\tau_i<\tau$ throughout the process.

	Moreover, \eqref{eqn:regproofl0islarge} and Lemma~\ref{lem:indinc} show that
	every unsuccessful step increases the index by at least
	\begin{align*}
		\Ind(H;\ccP_{i+1})-\Ind(H;\ccP_i)
		 & \ge \frac{\delta^4}{48}
		-\frac{20D}{\sqrt{\l_i2^{2t_i\l_i}}}
		\ge \frac{\delta^4}{48}-\frac{20D}{\sqrt{\l_0}}
		\ge \frac{\delta^4}{96}\,.
	\end{align*}
	As $\Ind(H;\ccP_i)\le 4D$ for every~$i$, there can be at most
	\[
		\frac{4D}{\delta^4/96}=2^{11}3^2\delta^{-6}
	\]
	unsuccessful steps. Since $K>2^{11}3^2\delta^{-6}$, the process terminates in fewer than~$K$ applications with a partition~$\ccP_i$
	for which at most $\delta\l_i^2t_i^3$ of the $\ee$-triads fail to be
	$(\delta,p)$-regular with respect to~$H$. The bound $\tau_i<\tau$ shows that
	this is a $(\tau,t_i,\l_i,\eps(\l_i))$-equitable partition.
	By the construction of~$T_0$ and~$L_0$, the parameters of~$\ccP_i$ also satisfy $t_i\le T_0$ and
	$\l_i\le L_0$. This proves Proposition~\ref{prop:regularity}.
\end{proof}
We now outline the proof of Lemma~\ref{lem:indinc}. We will need the following definition.
\begin{dfn}[Refinement of partitions]
	Given a ground set~$S$ and families of subsets $\ccA, \ccB$ of~$S$, we say that~$\ccA$ is finer than~$\ccB$ if for every $A\in \ccA$, there exists $B\in \ccB$ such that $A\subseteq B$. We say that~$\ccA$ refines~$\ccB$ if further $\bigcup \ccA = \bigcup\ccB$.
\end{dfn}

As in~\cite{FR02}, we obtain the equitable partition~$\ccP_1$
in three steps, each refining the partition of the pairs in~$V(H)$.
The steps are summarised in Figure~\ref{fig:regscheme} and formalised
in Facts~\ref{fact:step1} (Step 1), \ref{fact:step21} (Step 2),
and~\ref{fact:step3} (Step 3) below.
\begin{figure}[ht]
	\centering
		$\ccP\
		\xrightarrow[\text{\small refine by irregular~$\ee$-triads}]{\text{\small Fact~\ref{fact:step1} (Step 1)}}
		\ \ccQ\
		\xrightarrow[\text{\small regularise graphs}]{\text{\ \small Fact~\ref{fact:step21} (Step 2)\ }}
		\ \ccR\
		\xrightarrow[\text{\small slice to nearly equal density}]{\text{\small Fact~\ref{fact:step3} (Step 3)}}
		\ \ccP_1$
	\caption{The three refinement steps in the proof of Lemma~\ref{lem:indinc}.}
	\label{fig:regscheme}
\end{figure}

Before stating these facts, we fix the setup we will work with.
\begin{setup}
	Let $\tau>0$, $\delta\in(0,1/4]$, a function $\eps\colon \NN\lra (0,1]$ satisfying $\eps(x)\le (8x^2)^{-1}$ for every $x\in\NN$, and positive integers $t,\l$ be as given in the statement of Lemma~\ref{lem:indinc}. Throughout this proof, we may enlarge~$N_0$ to meet the finitely many lower bounds stated below. All of them depend only on the displayed parameters. Let~$H$ be a~$K_{1,2,2}$-free hypergraph on $n\ge N_0$ vertices.

	Let~$\ccP$ be a $(\tau,t,\l,\eps(\l))$-equitable partition
	(Definition~\ref{def:equit-partition}) with vertex partition
	\begin{align}
		\label{eqn:initialpartitionP}
		V(H) = V_0 \dcup V_1 \dcup V_2\dcup \cdots \dcup V_t\,.
	\end{align}
	We record a lower bound on the number of paths in every~$\ee$-triad of~$\ccP$. Put
	\[
		m=|V_1|=\cdots=|V_t|\,.
	\]
	Since $\tau\le 1/4$, we have
	$m\ge 3n/(4t)$. For a fixed~$\ee$-triad $P=(P^{ij}_\alpha,P^{jk}_\beta)$ and
	$y\in V_j$, put
	\[
		a_y=\deg_{P^{ij}_\alpha}(y) \qand b_y=\deg_{P^{jk}_\beta}(y)\,.
	\]
	Writing $x_+=\max\{x, 0\}$ for the positive part of a real number~$x$,
	the definition of $(\eps(\l),1/\l)$-regularity, applied to the set of vertices
	whose degree is below~$m/\l$, gives
	\[
		\sum_{y\in V_j}\Bigl(\frac m\l-a_y\Bigr)_+\le \eps(\l)m^2
		\qand
		\sum_{y\in V_j}\Bigl(\frac m\l-b_y\Bigr)_+\le \eps(\l)m^2\,.
	\]
	For every~$y$,
	\[
		a_yb_y\ge \frac{m^2}{\l^2}-\frac m\l\Bigl(\Bigl(\frac m\l-a_y\Bigr)_++\Bigl(\frac m\l-b_y\Bigr)_+\Bigr)\,.
	\]
	Consequently, using $\eps(\l)\le 1/(8\l^2)$, we obtain
	\begin{align}
		\label{eqn:triadpathlowerbound}
		|\ccKee(P)|
		 & =\sum_{y\in V_j}a_yb_y
		\ge \Bigl(\frac1{\l^2}-\frac{2\eps(\l)}\l\Bigr)m^3
		\ge \frac{3m^3}{4\l^2}
		\ge \frac{n^3}{4\l^2t^3}\,.
	\end{align}
	Further, we assume that there are at least $\delta\l^2t^3$ distinct $\ee$-triads $P^{ijk}_{\alpha\beta}\in \Triad(\ccP)$ that fail to be $(\delta,p)$-regular with respect to~$H$. Finally, let
	\begin{align*}
		\l' = \l 2^{2t\l} \qand \l_1 = 4(\l')^2\,.
	\end{align*}
\end{setup}
The refinement proceeds in three steps, each increasing the index: Step~1 separates the~$\ee$-triads that fail to be regular, Step~2 regularises the resulting bipartite graphs, and Step~3 slices them into pieces of nearly equal density. Only the last step is delicate. We now state these three facts and complete the proof of Lemma~\ref{lem:indinc} assuming them. Then we sketch the proofs of Facts~\ref{fact:step1} and~\ref{fact:step3}, while Fact~\ref{fact:step21} will be an immediate consequence of a multicolour graph regularity lemma (see Theorem~\ref{thm:graphreg} below).
\begin{fact}[Step 1]
	\label{fact:step1}
	There exists a $(\tau,t,\l')$-partition
	\[\ccQ = \big\{Q^{ij}_\alpha\colon \alpha\in [\l'] \tand ij\in [t]^{(2)}\big\}\]
	on~$V(H)$ such that~$\ccQ$ refines~$\ccP$, the associated vertex partition of~$\ccQ$ is~\eqref{eqn:initialpartitionP}, and further
	\[
		\Ind(H;\ccQ) \ge \Ind(H;\ccP) + \frac{\delta^4}{48}\,.
	\]
\end{fact}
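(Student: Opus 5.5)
For every irregular $\ee$-triad $P=P^{ijk}_{\alpha\beta}$ we take a witness $Q_P\subseteq P$, i.e.\ an $\ee$-partite subgraph with $|\ccKee(Q_P)|\ge\delta|\ccKee(P)|$ and $|\dee(H\,|\,Q_P)-\dee(H\,|\,P)|>\delta p$. We write $Q_P=Q_P^{ij}\cup Q_P^{jk}$ with $Q_P^{ij}\subseteq P^{ij}_\alpha$ and $Q_P^{jk}\subseteq P^{jk}_\beta$. Fix a bipartite graph $P^{ij}_\alpha$. It lies in at most $2t\l$ triads, either as first or as second component: for each $k$ and $\beta$ it can appear as $P^{ijk}_{\alpha\beta}$ or as $P^{kij}_{\beta\alpha}$, with the roles of $i,j$ also swappable. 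These triads supply at most $2t\l$ witness subgraphs of $P^{ij}_\alpha$. Their Venn diagram partitions $P^{ij}_\alpha$ into at most $2^{2t\l}$ pieces. Padding with empty graphs gives exactly $2^{2t\l}$ pieces, and doing this for every $\alpha$ gives $\l'=\l2^{2t\l}$ classes per pair $ij$. This is the family $\ccQ$. It refines $\ccP$ and keeps the vertex partition~\eqref{eqn:initialpartitionP}. Empty classes are allowed, because a $(\tau,t,\l')$-partition imposes no regularity.

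We next show that the index does not decrease on any triad. Each triad $P$ of $\ccP$ is the disjoint union of the triads $R=(Q^{ij}_{\gamma},Q^{jk}_{\gamma'})$ of $\ccQ$ with $Q^{ij}_\gamma\subseteq P^{ij}_\alpha$ and $Q^{jk}_{\gamma'}\subseteq P^{jk}_\beta$. In particular $\ccKee(P)=\bigdcup_R\ccKee(R)$. The quantity $\dee(H\,|\,P)$ is therefore the $|\ccKee(R)|$-weighted average of the $\dee(H\,|\,R)$. Since $\phi$ is convex, Jensen's inequality gives
\[
\sum_R\phi\bigl(\dee(H\,|\,R)/p\bigr)|\ccKee(R)|\ge\phi\bigl(\dee(H\,|\,P)/p\bigr)|\ccKee(P)|
\]
for every $P$.

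For an irregular $P$ we need a quantitative gain. The witness $\ccKee(Q_P)$ is a union of some of the $\ccKee(R)$, because $Q_P^{ij}$ and $Q_P^{jk}$ are unions of $\ccQ$-classes. Split the $R$ into those inside $Q_P$ and the rest, and apply Jensen to each group separately. This reduces the problem to the two-point convexity gain for $\phi$. The two averages are $x_1=\dee(H\,|\,Q_P)/p$ with weight $w\ge\delta$, and $x_2$ with weight $1-w$, where $|x_1-x_0|>\delta$ and $x_0=\dee(H\,|\,P)/p$. When all of $x_0,x_1,x_2$ are at most $2D$, $\phi$ is quadratic there. The gain is then exactly the variance $w(x_1-x_0)^2+(1-w)(x_2-x_0)^2\ge w(x_1-x_0)^2\ge\delta^3$. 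Each irregular triad therefore contributes at least $\delta^3|\ccKee(P)|\ge\delta^3n^3/(4\l^2t^3)$ by~\eqref{eqn:triadpathlowerbound}.

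The main obstacle is the linear regime of $\phi$. Above $2D$ the function has no curvature, so a triad with $x_0>2D$, or whose witness pushes a density beyond $2D$, may gain nothing. This is where the choice $D=48/\delta^2$ enters. We plan to discard triads with $x_0\ge D$. Since $\sum_P\dee(H\,|\,P)|\ccKee(P)|\le 6e(H)\le 6pn^3$ and $|\ccKee(P)|\ge n^3/(4\l^2t^3)$, at most $24\l^2t^3/D=\delta^2\l^2t^3/2$ triads have $x_0\ge D$. At least $\frac12\delta\l^2t^3$ irregular triads therefore have $x_0<D$. For these we need a curvature bound for $\phi$ that survives when $x_1$ or $x_2$ exceeds $2D$. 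Our approach is to use the supporting line of $\phi$ at $x_0<D$, namely $\ell(x)=x_0^2+2x_0(x-x_0)$. We check that $\phi(x)-\ell(x)\ge\min\{(x-x_0)^2,\,D|x-x_0|\}$; on $[2D,\infty)$ this follows because $\phi-\ell$ is increasing there with slope $4D-2x_0\ge 2D$. Since $|x_1-x_0|>\delta$ and $\delta\le D$, the gain remains at least $w\delta^2\ge\delta^3$. Summing over the good irregular triads gives
\[
\Ind(H;\ccQ)-\Ind(H;\ccP)\ge\frac{1}{6n^3}\cdot\frac{\delta\l^2t^3}{2}\cdot\frac{\delta^3n^3}{4\l^2t^3}=\frac{\delta^4}{48}.
\]
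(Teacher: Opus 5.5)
Your argument is correct and is essentially the paper's proof. It uses the same witness-based common refinement into $\l 2^{2t\l}$ classes, Jensen for monotonicity on every triad, the $6e(H)\le 6pn^3$ count to discard high-density triads, and a supporting-tangent remainder bound giving a gain of $\delta^3|\ccKee(P)|$ even where $\phi$ is linear. Your cutoff $x_0<D$ (instead of the paper's $\delta D$) and your witness-versus-rest grouping (instead of the four cells $\Theta^{ab}$) are cosmetic differences.

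The one detail to state explicitly, as the paper does, is that a triad $P^{ijk}_{\alpha\beta}$ and its reversal $P^{kji}_{\beta\alpha}$ receive the same witness. Otherwise the four orientations you list per $(k,\beta)$ can give up to $4(t-2)\l$ witnesses, and the bound of $2^{2t\l}$ pieces per bipartite graph can fail.
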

Having obtained the partition~$\ccQ$ as above, we will now regularise the $|\ccQ| = \binom{t}{2}\l'$ graphs simultaneously and obtain a new vertex partition satisfying the properties stated in Fact~\ref{fact:step21} below. We first state the multicolour regularity lemma for graphs, which we will use.
\begin{thm}[Multicolour graph regularity]
	\label{thm:graphreg}
	For all integers~$h$, $t\geq 1$, and every $\eps > 0$ there exists an integer $\SZRL(\eps, h, t)$ such that given~$h$ graphs $G_1,\dots, G_h$ on the vertex set
	\[
		V = V_0\dcup V_1\dcup \cdots \dcup V_t\,,
	\]
	with $|V_1| = \cdots = |V_t|$, there exist an integer~$t'$ with $tt'<\SZRL(\eps,h,t)$ and a partition
	\[
		V
		= W_0 \dcup \bigdcup_{i=1}^t\left(\bigdcup_{j=1}^{t'} W_{i,j}\right)
		= W_0 \dcup W_1\dcup W_2\dcup \cdots \dcup W_{tt'}\,,
	\]
	such that
	\begin{enumerate}[label=\alabel]
		\item We have $W_{i,j}\subseteq V_i$ for each $i\in [t]$ and $j\in [t']$.
		\item For each $i\in [t]$, we have $V_i\subseteq W_0\dcup\bigdcup_{j\in [t']}W_{i,j}$.
		\item The sets~$W_{i,j}$ have equal size.
		\item\label{it:Reg3} The exceptional class satisfies $V_0\subseteq W_0$ and $|W_0\sm V_0|\leq tt'$.
		\item For all but at most $\eps(tt')^2$ choices of $(i_1,i_2,j_1,j_2)$ with $i_1i_2\in [t]^{(2)}$ and~$j_1$, $j_2\in [t']$,
		      \[
			      G_r[W_{i_1,j_1}, W_{i_2,j_2}] \text{ is } \eps\text{-regular} \text{ for every $r\in [h]$}\,.
		      \]
	\end{enumerate}
\end{thm}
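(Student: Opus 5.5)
The multicolour graph regularity lemma is a standard consequence of Szemerédi's regularity lemma run simultaneously on~$h$ graphs with a fixed initial partition, so I would prove Theorem~\ref{thm:graphreg} by the usual energy-increment argument. For a partition~$\ccW$ of $V\sm W_0$ into classes I would use the index
\[
	q(\ccW)=\sum_{r=1}^{h}\sum_{A,B}\frac{|A||B|}{|V|^2}\,d_{G_r}(A,B)^2\,,
\]
where the inner sum runs over ordered pairs of distinct classes. This quantity lies in $[0,h]$.

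First I would start from the partition $V_1,\dots,V_t$. If $|V_i|$ is small compared with the eventual bound, I would take each vertex as its own class, which gives $t'=|V_1|$ with all pairs trivially regular. This is why the bound must be allowed to depend on~$t$. At each stage all classes have equal size, every class lies inside a single~$V_i$, and every~$V_i$ contains the same number~$t'$ of classes. Second, if more than $\eps(tt')^2$ pairs $(W_{i_1,j_1},W_{i_2,j_2})$ with $i_1\ne i_2$ are irregular for some~$G_r$, I would pick a witness pair $(A',B')$ for each of them. These witnesses are the subsets on which the density deviates by more than~$\eps$. I would then refine every class by the Venn diagram of all witness sets lying in it, which yields at most $2^{htt'}$ atoms per class. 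The standard Cauchy--Schwarz defect argument shows that this raises~$q$ by at least $\eps^5$ up to constants: the increment coming from each irregular pair is summed over $\eps(tt')^2$ pairs of weight $(tt')^{-2}$, and this holds for every~$r$ separately, so the total gain is at least the gain from any single~$r$. Third, I would re-equalise. Each atom is cut into pieces of a common size~$s$, chosen as a small fraction of the class size so that the discarded leftovers total at most an $\eps^{6}$-fraction of~$V$. Cutting atoms separately inside each~$V_i$ keeps every piece inside its~$V_i$. The leftover vertices go into~$W_0$.

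Since~$q$ is bounded by~$h$ and rises by about~$\eps^5$ each round, there are at most $O(h\eps^{-5})$ rounds. This gives the bound $\SZRL(\eps,h,t)$, which is a tower in $h\eps^{-5}$ times~$t$.

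The main obstacle is property~\ref{it:Reg3}, which demands the very strong bound $|W_0\sm V_0|\le tt'$ rather than $\eps|V|$. My plan is to avoid discarding vertices in any round except the last. In the intermediate rounds, I would argue with weighted partitions that have unequal parts; the index increment goes through unchanged for arbitrary partitions. Only at the end would I equalise. I would do this by choosing~$t'$, splitting each~$V_i$ into~$t'$ parts of size $\lfloor |V_i|/t'\rfloor$ that follow the final (unequal) partition as closely as possible, and putting the at most $t'$ remainder vertices per~$V_i$ into~$W_0$. This yields $|W_0\sm V_0|\le tt'$. The pieces that straddle atoms of the unequal partition, at most one per atom, affect only a small fraction of the pairs if~$t'$ is much larger than the number of atoms. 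In addition, regularity with parameter~$\eps/2$ is inherited, with parameter~$\eps$, by large subsets that nearly coincide with an atom. I would therefore run the iteration with~$\eps/4$ and absorb these losses into the~$\eps$ allowance for irregular pairs.
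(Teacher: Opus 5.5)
The paper gives no proof of this theorem. It quotes it as the standard multicolour Szemer\'edi lemma with a prescribed initial partition, whose usual proof is the energy increment you outline. The gap is in the step you yourself flag as the main obstacle. You iterate on unequal partitions, equalise only once at the end, and obtain (e) by claiming that regularity of the atom pairs passes to the pieces. But to make the straddling pieces negligible you take $t'$ much larger than the number $a$ of atoms in a class. Then a piece $W$ inside an atom $A$ is only a $|V_i|/(t'|A|)$-fraction of it: about $a/t'$ for an atom of average size, and far less for large atoms. So $W$ does not nearly coincide with $A$. Moreover, $\eps_0$-regularity of $G_r[A,B]$ bounds discrepancies only by $\eps_0|A||B|$, which says nothing at the scale $|W||W'|$.

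The implication is in fact false. Take a random bipartite graph of density $\frac12$ between $V_1$ and $V_2$, and cut both sides into $t'$ blocks. On the product of fixed halves of every pair of blocks, shift the edge probability by $\pm 8\eps$, with an independent random sign for each pair of blocks. For small $\eps$, $t'$ above an absolute constant, $|V_1|$ large, and a typical choice of signs, $(V_1,V_2)$ is $\frac{\eps}{4}$-regular. Your iteration therefore stops at once with one atom per class. Yet no pair of blocks is $\eps$-regular, and nothing in your final step prevents you from cutting along exactly these blocks.

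The remedy is to perform your final equalisation in every round and to test (e) on the equalised partition itself. After refining by the witness Venn diagrams, cut each $V_i$ consecutively, atom by atom, into $t''$ parts of sizes $\lfloor |V_i|/t''\rfloor$ or $\lceil |V_i|/t''\rceil$, with $t''$ large in terms of $a$, $h$, and $\eps$. At most $a$ parts per $V_i$ straddle atoms. Comparing with the common refinement then shows that the index drops by at most $O(ha/t'')$, which is smaller than the gain from the irregular pairs, and no vertices are discarded.

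When the process stops, delete one vertex from each part of the larger size. This moves fewer than $t'$ vertices of each $V_i$ into $W_0$, which gives (c) and (d). It worsens the regularity parameter only by a factor $(1+O(1/s))^2$ for parts of size $s$; small $V_i$ are handled by singletons, as you say.

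Finally, since $|V_0|$ is unrestricted, normalise the index by $(t|V_1|)^2$ rather than $|V|^2$. Otherwise a round gains only $\eps^{O(1)}(t|V_1|/|V|)^2$, and the number of rounds is not bounded in terms of $\eps$, $h$, and $t$.
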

Applying Theorem~\ref{thm:graphreg} to the graphs in~$\ccQ$ with
$\eps' = (\eps(\l_1))^6$ and $h = |\ccQ| = \binom{t}{2}\l'$,
we obtain the following\footnote{This choice yields the explicit error bounds displayed
	in the proof of Fact~\ref{fact:step3} immediately before~\eqref{eqn:Gamma0}.}.
\begin{fact}[Step 2]
	\label{fact:step21}
	There exists a vertex partition
	\begin{align*}
		V(H)
		= W_0 \dcup \bigdcup_{i\in [t]}\left(W_{i,1}\dcup \cdots\dcup W_{i,t'}\right)
		= W_0 \dcup W_1 \dcup \cdots \dcup W_{t_1}
	\end{align*}
	satisfying
	\begin{enumerate}[label=\alabel]
		\item The family $\{W_1,\dots, W_{t_1}\}$ is finer, i.e., $W_{i,j}\subseteq V_i$ for every $i\in [t]$ and $j\in [t']$.
		\item For every $i\in [t]$, we have $V_i \subseteq W_0 \dcup \bigdcup_{j\in [t']}W_{i,j}$.
		\item\label{it:step23} The exceptional class satisfies $V_0\subseteq W_0$ and $|W_0\sm V_0| \le t_1$.
		      Choosing~$N_0$ large enough that $t_1\le \delta 2^{-\l}n$, we consequently have $|W_0|\le (\tau+\delta 2^{-\l})n$.
		\item We have $t_1 =tt'$ and $t_1 < \SZRL(\eps', h, t)$.
	\end{enumerate}
\end{fact}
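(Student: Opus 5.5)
Fact~\ref{fact:step21} is a direct application of Theorem~\ref{thm:graphreg}, so the plan is mostly a translation of its conclusions into the stated items. We apply Theorem~\ref{thm:graphreg} with $\eps'=(\eps(\l_1))^6$, with $h=|\ccQ|=\binom t2\l'$, and with the~$t$ of the current setup. The graphs $G_1,\dots,G_h$ are the bipartite graphs $Q^{ij}_\alpha\in\ccQ$, each regarded as a graph on the whole vertex set~$V(H)$. The vertex partition is~\eqref{eqn:initialpartitionP}; it has $|V_1|=\dots=|V_t|=m$, so the hypothesis of the theorem holds. We obtain~$t'$ with $tt'<\SZRL(\eps',h,t)$ and a partition $V(H)=W_0\dcup\bigdcup_{i,j}W_{i,j}$. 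We then set $t_1=tt'$ and relabel the classes $W_{i,j}$ as $W_1,\dots,W_{t_1}$.

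The items are then checked one at a time. Item~(a) is item~(a) of Theorem~\ref{thm:graphreg}, and item~(b) is its item~(b). Item~(d) holds by the choice $t_1=tt'$. For item~\ref{it:step23}, item~\ref{it:Reg3} of the theorem gives $V_0\subseteq W_0$ and $|W_0\sm V_0|\le t_1$. From this we get $|W_0|\le|V_0|+t_1\le\tau n+t_1$.

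The only point needing care is the size estimate for~$W_0$. The bound $t_1<\SZRL(\eps',h,t)$ depends only on $t$, $\l$, $\delta$, and the function~$\eps$. This is because $\l'=\l2^{2t\l}$ and $\l_1=4(\l')^2$ are determined by $t$ and~$\l$, and hence so are $\eps'$ and~$h$. So we may enlarge~$N_0$ so that $\delta2^{-\l}N_0\ge\SZRL(\eps',h,t)$. For every $n\ge N_0$ this gives $t_1\le\delta2^{-\l}n$, and therefore $|W_0|\le(\tau+\delta2^{-\l})n$, as required.

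I expect no real obstacle here. The only subtlety is making sure that the dependence of~$N_0$ is legitimate, namely that it involves only parameters fixed in the setup and not~$H$. The regularity property~(e) of Theorem~\ref{thm:graphreg} is not listed among the items of the fact; it is simply carried along for use in Step~3.
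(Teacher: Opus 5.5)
Your proposal is correct and follows the paper's own route: the paper gets Fact~\ref{fact:step21} as an immediate application of Theorem~\ref{thm:graphreg} to the graphs of~$\ccQ$, with $\eps'=(\eps(\l_1))^6$ and $h=\binom t2\l'$, and reads items (a)--(d) off that theorem. It likewise enlarges~$N_0$, depending only on the setup parameters, so that $t_1\le\delta2^{-\l}n$ and hence $|W_0|\le(\tau+\delta2^{-\l})n$.
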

Let
\[
	|W_1|
	= \cdots
	= |W_{t_1}|
	= m'\,.
\]
We now define the $(\tau + \delta 2^{-\l},t_1,\l')$-partition~$\ccR$ by considering the graphs induced by~$Q^{ij}_\alpha\in \ccQ$ on $K[W_i, W_j]$. We give a more precise description. Given a pair of integers $i,j\in [t_1]$, there exist $i_1,j_1\in [t]$ such that
$W_i\subseteq V_{i_1}$ and $W_j \subseteq V_{j_1}$; we call the classes
$W_1,\dots,W_{t_1}$ \emph{fine}, the classes $V_1,\dots,V_t$ \emph{old}, and
write $\pi(i)=i_1$ for the \emph{old parent} of a fine class~$W_i$. For $i_1\neq j_1$, we consider the partition of the pairs $K[W_i,W_j]$ induced by the bipartite graphs in~$\ccQ$. Let
\[
	R_{\alpha}^{ij} = Q^{i_1j_1}_\alpha\cap K[W_i,W_j] \text{ for all $\alpha\in [\l']$,} \qand K[W_i,W_j] = \bigdcup_{\alpha\in [\l']}R_\alpha^{ij}\,.
\]
On the other hand, if for $i,j\in [t_1]$ we have $i_1=j_1$, then we set $R^{ij}_\alpha =\vn$ for all but one $\alpha \in [\l']$\footnote{As we will see later, the parts $K[W_i,W_j]$ where $i,j\in [t_1]$ and $i_1=j_1$ will never be considered for the final index increment and so we are not careful with how we choose these partitions.}. Let the collection of these bipartite graphs be denoted by
\[
	\ccR = \big\{R^{ij}_\alpha\colon \alpha\in [\l'], ij\in [t_1]^{(2)}\big\}\,.
\]
In view of Theorem~\ref{thm:graphreg}, among the pairs $ij\in[t_1]^{(2)}$
whose two classes have distinct old parents, all but at most $\eps't_1^2$
satisfy
\begin{itemize}[label=($\star$)]
	\item every graph in $\{R^{ij}_\alpha\colon \alpha\in [\l']\}$ is~$\eps'$-regular, and the vertex sets satisfy $W_i\subseteq V_{i_1}$ and $W_j \subseteq V_{j_1}$ for some $i_1j_1\in [t]^{(2)}$.
\end{itemize}
In the final partition~$\ccP_1$, we will only consider the contribution of the~$\ee$-triads that are between parts $W_i,W_j,W_k$ where both $ij,jk\in[t_1]^{(2)}$ satisfy~$(\star)$ above. To this end, we will need the following notation. Let
\[
	\reg_2(\ccR) = \big\{ij\in [t_1]^{(2)}\colon (\star) \text{ holds}\big\}\,.
\]
Every~$R^{ij}_\alpha$ is a subgraph of some $Q^{i_1j_1}_\alpha\in \ccQ$, so~$\ccR$ is finer than~$\ccQ$. Since~$W_0$ may be larger than~$V_0$, however, $\ccR$ need not refine~$\ccQ$. In other words, $\bigcup \ccR\subseteq \bigcup \ccQ$, but it is not necessarily true that $\bigcup \ccR =\bigcup \ccQ$.
\begin{fact}[Step 3]
	\label{fact:step3}
	There exists a $(\tau + \delta 2^{-\l},t_1,\l_1,\eps(\l_1))$-equitable partition~$\ccP_1$ with
	\[
		\ccP_1=\big\{S^{ij}_\alpha\colon ij\in [t_1]^{(2)}\tand \alpha\in [\l_1]\big\}
	\]
	and associated vertex partition
	$V(H) = W_0 \dcup W_1 \dcup W_2\dcup \cdots \dcup W_{t_1}$
	such that the following holds:
	\begin{enumerate}[label=\alabel]
		\item\label{it:step31} For each $ij\in [t_1]^{(2)}$
		      \[
			      K[W_i, W_j] =\bigdcup_{\alpha\in [\l_1]} S_{\alpha}^{ij}
		      \]
		      where the~$S^{ij}_\alpha$ are $(\eps(\l_1), 1/\l_1)$-regular.

		\item\label{it:step32} For each $ij\in \reg_2(\ccR)$ let
		      \[
			      I^{ij}_1 = \{\alpha\in [\l_1]\colon S^{ij}_\alpha \text{ is not a subgraph of any } R^{ij}_\beta\}\,.
		      \]
		      We have
		      \[
			      \sum_{\alpha\in I^{ij}_1} |S^{ij}_\alpha| \le \frac{(m')^2}{\l'}\,.
		      \]
	\end{enumerate}
\end{fact}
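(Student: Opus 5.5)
We build $\ccP_1$ by slicing every graph of $\ccR$ at random. For a pair $ij\in\reg_2(\ccR)$ every $R^{ij}_\beta$ is $\eps'$-regular, say with density $d_\beta$, and $\sum_\beta d_\beta=1$. Put $k_\beta=\lfloor \l_1 d_\beta\rfloor$, so that $k_\beta/\l_1\le d_\beta$ and $\sum_\beta k_\beta\ge \l_1-\l'$.

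We split $R^{ij}_\beta$ into $k_\beta$ pieces plus one leftover piece. Each edge of $R^{ij}_\beta$ independently receives label $\gamma\in[k_\beta]$ with probability $1/(\l_1 d_\beta)$ each, and otherwise goes to the leftover with the remaining probability $1-k_\beta/(\l_1 d_\beta)$. The leftover pieces over all $\beta$ are merged into one graph $L^{ij}$ of density
\[
1-\sum_\beta k_\beta/\l_1\le \l'/\l_1=1/(4\l')\,.
\]
This density is a multiple of $1/\l_1$ up to rounding. The merged graph $L^{ij}$ is then cut, again by random labelling of its edges, into $\l_1-\sum_\beta k_\beta$ pieces of density $1/\l_1$. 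These are the only indices in $I^{ij}_1$, and their total edge count is at most $(m')^2/(4\l')\le (m')^2/\l'$, which gives~\ref{it:step32}.

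For pairs $ij\notin\reg_2(\ccR)$, including pairs with equal old parents, we first apply the graph regularity input again. Alternatively we partition $K[W_i,W_j]$ directly into $\l_1$ random pieces, each edge uniformly labelled. A uniformly random $\l_1$-colouring of a complete bipartite graph is $(\eps(\l_1),1/\l_1)$-regular with high probability. By Chernoff bounds and a union bound over the $2^{2m'}$ pairs of subsets, the deviation is at most about $m'^{3/2}$, which is $o(m'^2)$ since $m'\ge n/(2t_1)\to\infty$ for $n\ge N_0$.

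Regularity for the pieces of $R^{ij}_\beta$ follows from the same mechanism. For subsets $X',Y'$, the expected number of $\gamma$-coloured edges is $e_{R_\beta}(X',Y')/(\l_1 d_\beta)$. By $\eps'$-regularity of $R_\beta$ this equals $|X'||Y'|/\l_1\pm \eps' m'^2/(\l_1 d_\beta)$. Concentration over all $4^{m'}$ subset pairs costs $O(m'^{3/2})$. The leftover graph $L^{ij}$ is a union of slices of regular graphs, so its expected density on $X'\times Y'$ is again close to uniform, and its pieces are regular in the same way.

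The main obstacle is the factor $1/d_\beta$: if $d_\beta$ is tiny, $\eps'/d_\beta$ need not be at most $\eps(\l_1)$. We handle this by only giving slices to classes with $d_\beta\ge \l_1^{-1}$ (otherwise $k_\beta=0$ anyway). For those classes, $\eps'/(\l_1 d_\beta)\le \eps'\le \eps(\l_1)^6$, which is plenty. The leftover graph receives contributions from all $\beta$, and its error is bounded by $\sum_\beta \eps' m'^2\le \l'\eps' m'^2$. This is below $\eps(\l_1)m'^2$ because $\eps'=\eps(\l_1)^6$ and $\eps(\l_1)\le 1/(8\l_1^2)<1/\l'$.

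A union bound over the at most $t_1^2\l_1$ pieces completes the existence argument, and the vertex partition $W_0\dcup\cdots\dcup W_{t_1}$ from Fact~\ref{fact:step21} supplies the $\tau+\delta2^{-\l}$ bound. The density-$1/\l_1$ rounding issue, which forces the leftover to have integral size in units of $1/\l_1$, is absorbed by letting exactly one piece take density $1/\l_1\pm O(1/m'^2)$; the regularity tolerance absorbs this.
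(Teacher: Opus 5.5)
Your argument is correct, but it organises the slicing differently from the paper.

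\textbf{The paper's route.} The paper applies the black-box slicing lemma (Fact~\ref{fact:slicing}) to each $R^{ij}_\beta$ and collects the exceptional parts into $\Gamma^{ij}_0$. It proves $\Gamma^{ij}_0$ regular by a complement trick: $\Gamma^{ij}_0$ is $K[W_i,W_j]$ minus at most $\ell'\ell_1$ graphs that are $2\varepsilon'$-regular. It then slices $\Gamma^{ij}_0$ a second time. Finally it needs a counting argument showing that the number $r$ of pieces is $\ell_1-1$ or $\ell_1$, and then either keeps the residue $Z$ as a piece or absorbs it.

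\textbf{Your route.} You inline the random slicing with class-dependent probabilities. You take $k_\beta=\lfloor \ell_1 d_\beta\rfloor$ and use that the actual densities satisfy $\sum_\beta d_\beta=1$ exactly. Hence the leftover has expected density exactly $q/\ell_1$, where $q=\ell_1-\sum_\beta k_\beta=\sum_\beta\{\ell_1 d_\beta\}\le \ell'-1$. So the piece count is $\ell_1$ by construction, and no residue, case distinction or absorption is needed. The regularity of $L^{ij}$ comes from summing the $\ell'$ individual error terms rather than from the complement trick.

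\textbf{Trade-offs.} You must carry out the Chernoff/union-bound concentration yourself, with non-uniform probabilities, which essentially reproves the slicing lemma. You must also restrict slicing to classes with $d_\beta\ge 1/\ell_1$, as you do, so that the factor $1/(\ell_1 d_\beta)$ stays harmless. The paper's route keeps all randomness inside the black-box lemma and is purely deterministic afterwards, at the cost of the two-round slicing and the final absorption.

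\textbf{Two small remarks.}
\begin{enumerate}
\item The ``rounding issue'' in your last paragraph does not actually arise. The expected leftover density is exactly $q/\ell_1$, and random fluctuations of order $m'^{3/2}$ are absorbed by the absolute-error definition of $(\varepsilon(\ell_1),1/\ell_1)$-regularity.
\item $R^{ij}_\beta$ is only $(\varepsilon',d)$-regular for \emph{some} $d$. You should use that it is $(2\varepsilon',d_\beta)$-regular for its actual density $d_\beta$, which is a harmless constant loss.
\end{enumerate}
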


\begin{proof}[Proof of Lemma~\ref{lem:indinc} assuming Facts~\ref{fact:step1}, \ref{fact:step21}, and~\ref{fact:step3}]
	We call a pair~$xy$ with $x\in W_i$ and
	$y\in W_j$ \emph{good} if $\pi(i)\ne\pi(j)$, $ij\in\reg_2(\ccR)$, and the unique
	$\alpha\in[\l_1]$ for which $xy\in S^{ij}_\alpha$ does not belong
	to~$I^{ij}_1$.
	Moreover, we refer to $\sum_{P\in\cF}|\ccKee(P)|$ as the \emph{path mass}
	of a family~$\cF$ of ordered~$\ee$-triads.

	Let~$H_{\ccQ}$ be the subhypergraph of~$H$ consisting of the hyperedges whose three
	vertices lie in three pairwise distinct classes among $V_1,\dots,V_t$.
	Thus~$H_{\ccQ}$ consists precisely of the hyperedges supported by the~$\ee$-triads of~$\ccQ$. Define~$H'$ to be the subhypergraph of~$H_{\ccQ}$ consisting of
	those hyperedges $xyz$ for which there are $i,j,k\in[t_1]$ such that
	$x\in W_i$, $y\in W_j$, $z\in W_k$, the old parents
	$\pi(i),\pi(j),\pi(k)$ are pairwise distinct, and all three pairs
	$xy,xz,yz$ are good.
	\begin{clm}
		\label{clm:HminusHprime}
		We have
		\[
			e(H_{\ccQ})- e(H') \le \frac{5n^{5/2}}{\sqrt{\l'}}\,.
		\]
	\end{clm}
	\begin{proof}
		Increase~$N_0$, if necessary, so that Proposition~\ref{prop:crowded}
		applies to~$\ccP_1$ and $t_1n^2\le n^{5/2}/\sqrt{\l'}$.
		Set $C=\sqrt{\l'}$, and let~$\cE_{\rm cr}$ be the set of
		hyperedges of~$H$ lying in at least one crowded~$\ee$-triad
		of~$\ccP_1$. By Proposition~\ref{prop:crowded},
		\[
			|\cE_{\rm cr}|
			\le \sum\bigl\{e_{\ee}(H\,|\,S)\colon S\in\Triad(\ccP_1)\text{ is crowded}\bigr\}
			\le \frac{2}{C}pn^3
			= \frac{2n^{5/2}}C\,.
		\]
		At most $t_1n^2$ hyperedges of~$H_{\ccQ}$ contain a vertex of
		$W_0\sm V_0$.

		Consider a remaining hyperedge in $E(H_{\ccQ})\sm E(H')$ that is
		not in~$\cE_{\rm cr}$. Its vertices lie in fine classes
		with pairwise distinct old parents, and one of its three pairs~$xy$ fails
		to be good. In fact, this is because either the corresponding class-pair~$ij$ belongs to
		\[
			\cB
			=
			\big\{ij\in[t_1]^{(2)}\colon \pi(i)\ne\pi(j),\ ij\notin\reg_2(\ccR)\big\}\,,
		\]
		or because $xy$ belongs to a graph~$S^{ij}_\alpha$ with $\alpha\in I^{ij}_1$.

		In the first case we use that $|\cB|\le \eps't_1^2$ and that, for every
		$ij\in\cB$, the ordered fine~$\ee$-triads whose first pair of classes
		is~$(W_i, W_j)$ have path mass at most $(m')^2n$. Since every hyperedge
		under consideration lies only in~$\ee$-triads of density below~$Cp$, the
		first case accounts for at most $Cp\eps'n^3$ hyperedges.

		In the second case, Fact~\ref{fact:step3}~\ref{it:step32} gives,
		for every $ij\in\reg_2(\ccR)$,
		\[
			n\sum_{\alpha\in I^{ij}_1}|S^{ij}_\alpha|
			\le \frac{n(m')^2}{\l'}\,.
		\]
		Summing over all fine class-pairs, the fine~$\ee$-triads whose first pair
		lies in a graph~$S^{ij}_\alpha$ with $\alpha\in I^{ij}_1$ have path mass at
		most $n^3/\l'$, so the second case accounts for at most $Cpn^3/\l'$
		hyperedges.
		Consequently,
		\[
			e(H_{\ccQ})-e(H')
			\le \frac{2n^{5/2}}C+t_1n^2
			+Cp\left(\eps'n^3+\frac{n^3}{\l'}\right)\,.
		\]
		By the choice of~$N_0$, $t_1n^2\le n^{5/2}/\sqrt{\l'}$. Moreover,
		$\eps'=(\eps(\l_1))^6\le1/\l'$. With $p=n^{-1/2}$ and
		$C=\sqrt{\l'}$, the four terms in the last display are bounded, 
		respectively, by $2$, $1$, $1$, and $1$ times $n^{5/2}/\sqrt{\l'}$.
	\end{proof}
	For each $Q\in\Triad(\ccQ)$, let $\cS(Q)$ consist of the ordered
	$\ee$-triads $S\in\Triad(\ccP_1)$ whose two constituent pair graphs are
	subgraphs of the two constituent pair graphs of~$Q$. These path sets are
	pairwise disjoint subsets of $\ccKee(Q)$, and every hyperedge of~$H'$ supported by~$Q$ is supported by a unique member of~$\cS(Q)$.
	Adjoin to them one further cell consisting of the remaining paths of~$Q$,
	which lie in no member of~$\cS(Q)$, and assign to this cell the $H'$-density zero. Jensen's
	inequality applied to this partition of $\ccKee(Q)$ gives
	\[
		\sum_{S\in\cS(Q)}
		\phi\left(\frac{\dee(H'\,|\,S)}p\right)|\ccKee(S)|
		\ge
		\phi\left(\frac{\dee(H'\,|\,Q)}p\right)|\ccKee(Q)|\,.
	\]
	Summing over the old~$\ee$-triads and using the nonnegativity of all remaining
	fine~$\ee$-triad contributions proves
	\begin{align}
		\label{eqn:indincA3}
		\Ind(H';\ccP_1)\ge \Ind(H';\ccQ)\,.
	\end{align}
	Next, we show that
	\begin{align}
		\label{eqn:indincA4}
		\Ind(H';\ccQ)\ge \Ind(H;\ccQ) - \frac{4D(e(H_{\ccQ})-e(H'))}{pn^3}\,.
	\end{align}
	To this end, let $H''=H_{\ccQ}\sm H'$. For every
	$Q\in\Triad(\ccQ)$ we have $\Eee(H\,|\,Q)=E(H_{\ccQ}\,|\,Q)$, and~$H_{\ccQ}$ is the edge-disjoint union of~$H'$ and~$H''$. Since~$\phi$
	is nondecreasing and $4D$-Lipschitz,
	for every $Q\in\Triad(\ccQ)$ we have $\phi(\dee(H\,|\,Q)/p)-\phi(\dee(H'\,|\,Q)/p)\le 4D\,\dee(H''\,|\,Q)/p$.
	Every hyperedge of~$H''$ occurs in exactly six ordered~$\ee$-triads of~$\ccQ$. Thus,
	using the factor~$1/6$ in the definition of the index, we obtain
	\[
		\Ind(H';\ccQ)
		\ge \Ind(H;\ccQ)
		-\frac{4D}{6pn^3}\sum_{Q\in\Triad(\ccQ)}e_{\ee}(H''\,|\,Q)
		= \Ind(H;\ccQ)-\frac{4D(e(H_{\ccQ})-e(H'))}{pn^3}\,.
	\]
	In view of Claim~\ref{clm:HminusHprime} and~\eqref{eqn:indincA4}, we then have
	\[
		\Ind(H';\ccQ) \ge \Ind(H;\ccQ) - \frac{20D}{\sqrt{\l'}}\,.
	\]
	Together with Fact~\ref{fact:step1} and~\eqref{eqn:indincA3}, this implies
	\[
		\Ind(H';\ccP_1)
		\ge \Ind(H;\ccP) + \frac{\delta^4}{48} - \frac{20D}{\sqrt{\l'}}
		=\Ind(H;\ccP) + \frac{\delta^4}{48} - \frac{20D}{\sqrt{\l2^{2t\l}}}\,.
	\]
	Since $H'\subseteq H$, we have $\Ind(H;\ccP_1)\ge \Ind(H';\ccP_1)$, and this completes the proof of the lemma.
\end{proof}

\begin{proof}[Proof sketch of Fact~\ref{fact:step1}]
	The proof follows the lines of the index increment for graphs in~\cite{scott}. We first focus on one irregular~$\ee$-triad that has bounded density.
	\begin{clm}
		\label{clm:indinc1}
		Let $P=(P^{ij}_\alpha,P^{jk}_\beta)\in \Triad(\ccP)$ be such that~$H$ is not $(\delta,p)$-regular with respect to~$P$ and $\dee(H\,|\,P)\le \delta Dp$.
		Write $P^{ij}=P^{ij}_\alpha$ and $P^{jk}=P^{jk}_\beta$.
		Then there exists a partition
		\[
			P^{ij}=\Theta^{ij}_{0}\cup \Theta^{ij}_{1}, \qquad P^{jk}=\Theta^{jk}_{0}\cup \Theta^{jk}_{1}\,,
		\]
		such that, writing
		$\Theta^{ab}=\left(\Theta^{ij}_{a},\Theta^{jk}_{b}\right)$
		for $(a,b)\in\{0,1\}^2$, we have
		\begin{align}
			\label{eqn:clmstep11}
			\sum_{(a,b)\in\{0,1\}^{2}}
			\phi\left(\frac{\dee\left(H\,|\,\Theta^{ab}\right)}{p}
			\right)|\ccKee(\Theta^{ab})|
			\ge
			\left(\phi\left(\frac{\dee(H\,|\,P)}{p}\right)+\delta^{3}\right)|\ccKee(P)|\,.
		\end{align}
	\end{clm}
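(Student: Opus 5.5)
Write $d_P=\dee(H\,|\,P)$ and $K=|\ccKee(P)|$. Since $H$ is not $(\delta,p)$-regular with respect to~$P$, Definition~\ref{def:reg} gives a $\ee$-partite subgraph $Q\subseteq P$ with $|\ccKee(Q)|\ge\delta K$ and $|\dee(H\,|\,Q)-d_P|>\delta p$. We take $\Theta^{ij}_1=Q[V_i,V_j]$, $\Theta^{ij}_0=P^{ij}\sm\Theta^{ij}_1$, and similarly for~$jk$. Then the four sets $\ccKee(\Theta^{ab})$ partition $\ccKee(P)$, and $\ccKee(\Theta^{11})=\ccKee(Q)$. Because every path of~$P$ lies in exactly one cell, $d_P$ is the $|\ccKee(\Theta^{ab})|$-weighted average of the four densities $\dee(H\,|\,\Theta^{ab})$. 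Hence the left-hand side of~\eqref{eqn:clmstep11}, normalised by~$K$, is an average of $\phi$ over this four-point distribution of $x_{ab}=\dee(H\,|\,\Theta^{ab})/p$, whose mean is $d_P/p\le\delta D$.

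The core is a quantitative Jensen (variance) bound for~$\phi$. On $[0,2D]$ the function~$\phi$ equals $x^2$, and it is convex with $\phi(x)\ge x^2-(x-2D)^2$ for $x\ge 2D$. I would proceed in two steps.

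First, I would use the tangent-line inequality at the mean $\bar x=d_P/p$. Since $\bar x\le\delta D<2D$, we have $\phi'(\bar x)=2\bar x$. I claim that for all $x\ge0$,
\[
\phi(x)\ge\phi(\bar x)+2\bar x(x-\bar x)+\min\{(x-\bar x)^2,\,c\,|x-\bar x|\}
\]
with $c=2D$, say. On $[0,2D]$ this holds with equality for the quadratic term. For $x\ge2D$, convexity together with the linear growth $4D$ against the tangent slope $2\bar x\le 2\delta D$ gives an excess of at least $(4D-2\delta D)(x-2D)+(2D-\bar x)^2\ge D\,|x-\bar x|$, because $\bar x\le D/4$ and $\delta\le1/4$.

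Second, I would average this inequality over the four cells. The linear term vanishes, and the weight of $\Theta^{11}$ is at least~$\delta$. The deviation there satisfies $|x_{11}-\bar x|>\delta$, so the gain is at least
\[
\delta\cdot\min\{\delta^2,\ D\delta\}\cdot K=\delta^3K,
\]
because $D=48/\delta^2\ge\delta$. This yields~\eqref{eqn:clmstep11}.

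The main obstacle is the second branch of the minimum, that is, the case where $x_{11}$ lies far beyond $2D$, so that~$\phi$ is only linear there. This is exactly where the assumption $d_P\le\delta Dp$ is used. It keeps the tangent slope $2\bar x$ well below the asymptotic slope $4D$, so that the linear excess stays proportional to $|x-\bar x|$ with constant of order~$D$. That constant dominates $\delta^2$ easily, since $D$ is huge compared with~$\delta^{-1}$. I would verify this pointwise inequality carefully, splitting into the cases $x\le 2D$ and $x\ge2D$. The remaining steps are routine bookkeeping with the partition of $\ccKee(P)$.
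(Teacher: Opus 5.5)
Your proposal is correct and is essentially the paper's proof: the same split of $P$ along the irregularity witness~$Q$, the same tangent-line remainder $\phi(y)-\phi(\bar x)-2\bar x(y-\bar x)$ at the mean $\bar x\le\delta D<2D$, and the same averaging step, in which only the witness cell (weight at least~$\delta$, remainder exceeding~$\delta^2$) matters; the paper bounds the remainder beyond~$2D$ directly by $(2D-\bar x)^2>\delta^2$ instead of your linear bound. One small slip: with $c=2D$ the pointwise inequality is false for $2D<y<2D+\bar x$ when $\bar x>0$, but the constant $c=D$ that you actually verify and use is correct and suffices.
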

	\begin{proof}
		Since~$H$ is not $(\delta,p)$-regular with respect to~$P$, there is a~$\ee$-partite subgraph $Q=(A_0,B_0)\subseteq P=(A,B)$ such that
		\[
			|\ccKee(Q)|\ge \delta|\ccKee(P)|
			\qand
			|\dee(H\,|\,Q)-\dee(H\,|\,P)|>\delta p\,.
		\]
		Set $A_1=A\sm A_0$, $B_1=B\sm B_0$, and
		$\Theta^{ab}=(A_a,B_b)$ for $a,b\in\{0,1\}$. Put
		\[
			K=|\ccKee(P)|,\qquad
			w_{ab}=\frac{|\ccKee(\Theta^{ab})|}{K},\qquad
			x_{ab}=\frac{\dee(H\,|\,\Theta^{ab})}{p},\qquad
			x=\frac{\dee(H\,|\,P)}p\,.
		\]
		Here $K>0$ by~\eqref{eqn:triadpathlowerbound}.
		The four path sets $\ccKee(\Theta^{ab})$ form a disjoint partition
		of $\ccKee(P)$. Consequently,
		\[
			\sum_{a,b}w_{ab}=1,
			\qquad
			\sum_{a,b}w_{ab}x_{ab}=x,
			\qquad
			w_{00}\ge\delta,
			\qquad
			|x_{00}-x|>\delta\,.
		\]
		Since $x\le\delta D\le D/4<2D$, we have $\phi(x)=x^2$. For
		$y\ge0$ define the remainder above the supporting tangent to~$\phi$ at~$x$ by
		\[
			\rho_x(y)=\phi(y)-\phi(x)-2x(y-x)\,.
		\]
		Directly from the definition of~$\phi$,
		\[
			\rho_x(y)=
			\begin{cases}
				(y-x)^2,                & 0\le y\le2D, \\
				(2D-x)^2+(4D-2x)(y-2D), & y\ge2D.
			\end{cases}
		\]
		Thus $\rho_x(y)\ge0$ for every $y\ge0$. In addition,
		we have $\rho_x(x_{00})>\delta^2$. This is immediate if $x_{00}\le2D$, while if
		$x_{00}\ge2D$, then
		\[
			\rho_x(x_{00})
			\ge(2D-x)^2
			\ge((2-\delta)D)^2
			>\delta^2\,.
		\]
		Using the averaging identities above, we now obtain
		\begin{align*}
			\frac1K\sum_{a,b}
			\phi(x_{ab})|\ccKee(\Theta^{ab})|-\phi(x)
			&=\sum_{a,b}w_{ab}\rho_x(x_{ab})
			+2x\sum_{a,b}w_{ab}(x_{ab}-x)              \\
			&=\sum_{a,b}w_{ab}\rho_x(x_{ab})
			\ge w_{00}\rho_x(x_{00})
			>\delta^3\,.
		\end{align*}
		This proves~\eqref{eqn:clmstep11}.
	\end{proof}

	\begin{clm}
		\label{clm:indinc2}
		There are at least $\delta\l^2t^3/2$ $\ee$-triads $P\in \Triad(\ccP)$ such that~$H$ fails to be $(\delta,p)$-regular with respect to~$P$ and $\dee(H\,|\,P) \le \delta Dp$.
	\end{clm}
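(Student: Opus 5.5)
We will show that few $\ee$-triads have density exceeding $\delta Dp$; since at least $\delta\l^2t^3$ triads are irregular by assumption, subtracting gives the claim. So the plan is to bound
\[
	\bigl|\{P\in\Triad(\ccP)\colon \dee(H\,|\,P)>\delta Dp\}\bigr|\le \tfrac12\delta\l^2t^3\,.
\]

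\emph{Approach via counting hyperedges.} Every $\ee$-triad~$P$ has $|\ccKee(P)|\ge n^3/(4\l^2t^3)$ by~\eqref{eqn:triadpathlowerbound}. Hence a triad of density above $\delta Dp$ supports more than $\delta Dp\,n^3/(4\l^2t^3)$ hyperedges. On the other side, we sum over all triads. Each hyperedge occurs in at most six ordered $\ee$-triads, so
\[
	\sum_{P}e_{\ee}(H\,|\,P)\le 6e(H)\le 6n^{5/2}=6pn^3\,,
\]
using $K_{1,2,2}$-freeness and $n\ge N_0$. Therefore the number of dense triads is less than
\[
	\frac{6pn^3\cdot 4\l^2t^3}{\delta Dpn^3}=\frac{24\l^2t^3}{\delta D}\,.
\]
With $D=48/\delta^2$ this equals $\tfrac12\delta\l^2t^3$.

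\emph{Conclusion.} Subtracting the dense triads from the at least $\delta\l^2t^3$ irregular ones leaves at least $\tfrac12\delta\l^2t^3$ irregular triads with $\dee(H\,|\,P)\le\delta Dp$.

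\emph{Main obstacle.} The only delicate point is checking that the constants line up exactly, which the choice $D=48/\delta^2$ was designed to ensure. This needs two inputs:
\begin{itemize}
	\item the lower bound $|\ccKee(P)|\ge n^3/(4\l^2t^3)$, which relies on $\tau\le1/4$ and $\eps(\l)\le(8\l^2)^{-1}$ as recorded in the setup;
	\item the hyperedge bound $e(H)\le n^{5/2}$ for large~$n$.
\end{itemize}
Proposition~\ref{prop:crowded} with $C=\delta D$ would give a similar bound, but the direct count above already suffices and avoids its additional hypotheses.
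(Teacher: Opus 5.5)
Your proposal is correct and matches the paper's proof essentially verbatim. The paper likewise bounds the number $q$ of $\ee$-triads with $\dee(H\,|\,P)>\delta Dp$ by combining~\eqref{eqn:triadpathlowerbound} with the sixfold count $\sum_{P}e_{\ee}(H\,|\,P)\le 6e(H)\le 6pn^3$, obtains $q<24\l^2t^3/(\delta D)=\tfrac12\delta\l^2t^3$ from $D=48/\delta^2$, and subtracts this from the at least $\delta\l^2t^3$ irregular triads, without invoking Proposition~\ref{prop:crowded}.
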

	\begin{proof}
		Let~$q$ be the number of~$\ee$-triads $P\in\Triad(\ccP)$ with
		$\dee(H\,|\,P)>\delta Dp$. By~\eqref{eqn:triadpathlowerbound} and the
		sixfold counting of hyperedges by ordered~$\ee$-triads,
		\[
			q\,\delta Dp\frac{n^3}{4\l^2t^3}
			<\sum_{P\in\Triad(\ccP)}e_{\ee}(H\,|\,P)
			\le6e(H)
			\le6pn^3\,.
		\]
		Consequently,
		\[
			q
			<\frac{24}{\delta D}\l^2t^3
			=\frac\delta2\l^2t^3\,,
		\]
		where the last equality uses $D=48/\delta^2$. Since at least
		$\delta\l^2t^3$ $\ee$-triads are irregular, the claim follows.
	\end{proof}
	We simultaneously partition every bipartite graph in $\ccP= \{P^{ij}_\alpha\colon ij\in [t]^{(2)}, \alpha \in [\l]\}$ using the witnesses of irregularity in the following way.

	Given $P^{ij}_\alpha\in \ccP$, there are at most~${2t\l}$ choices of $k, \beta$ such that $(P^{ij}_\alpha,P^{ik}_\beta)$ or $(P^{ij}_\alpha, P^{jk}_\beta)$ forms an irregular~$\ee$-triad, where we use
	that $P^{ij}_\alpha$ and~$P^{ji}_\alpha$ denote the same bipartite graph. The reverse orientations~$P^{ijk}_{\alpha\beta}$ and~$P^{kji}_{\beta\alpha}$ have the same associated graph, and we use the same irregularity witness for both, so they do not create additional partitions. For each of these irregular~$\ee$-triads, using Claim~\ref{clm:indinc1} above, we obtain a (possibly trivial) partition of~$P^{ij}_\alpha$ into two parts $\Theta^{ij}_0\dcup \Theta^{ij}_1$. We then take the common refinement of these partitions over all irregular~$\ee$-triads.

	As a consequence, we obtain a partition of $K[V_i,V_j]$ into $\l' = \l\cdot 2^{2t\l}$ parts, some of which may be empty. Denote this partition by
	\[
		K[V_i,V_j] = \bigdcup_{\alpha = 1}^{\l'} Q^{ij}_{\alpha}\,.
	\]
	The collection of pairwise disjoint bipartite graphs thus obtained will be denoted by
	\[
		\ccQ= \{Q^{ij}_\alpha\colon ij\in [t]^{(2)}, \alpha\in [\l']\}\,.
	\]
	For every irregular~$\ee$-triad of density at most $\delta Dp$,
	Claim~\ref{clm:indinc1} gives a raw increase of more than
	$\delta^3|\ccKee(P)|$. Subsequent common refinements do not decrease this
	contribution, by convexity. Combining Claim~\ref{clm:indinc2},
	\eqref{eqn:triadpathlowerbound}, and the factor~$1/6$ in the index, we obtain
	\begin{align*}
		\Ind(H;\ccQ)-\Ind(H;\ccP)
		 & \ge \frac1{6n^3}\cdot\frac\delta2\l^2t^3\cdot
		\delta^3\cdot\frac{n^3}{4\l^2t^3}
		=\frac{\delta^4}{48}\,,
	\end{align*}
	which concludes the discussion of the proof of Fact~\ref{fact:step1}
\end{proof}
\begin{proof}[Proof sketch of Fact~\ref{fact:step3}]
	In Step~2 we obtained the partition~$\ccR$, which has associated vertex partition
	\[
		V(H) = W_0 \dcup W_1\dcup W_2\dcup \cdots \dcup W_{t_1}\,,
	\]
	and a collection of bipartite graphs
	\[
		\ccR= \{R^{ij}_\alpha\colon \alpha \in [\l'], ij\in [t_1]^{(2)}\}\,,
	\]
	such that for every $ij\in \reg_2(\ccR)$, we have the partition
	\begin{align*}
		K[W_i, W_j] = \bigdcup_{\alpha\in [\l']}R^{ij}_\alpha\,,
	\end{align*}
	where the~$R^{ij}_\alpha$ are~$\eps'$-regular.

	Our goal is to obtain the $(\tau+ \delta 2^{-\l},t_1,\l_1,\eps(\l_1))$-equitable partition~$\ccP_1$ satisfying Fact~\ref{fact:step3}~\ref{it:step31} and~\ref{it:step32}.
	Note that for $ij\notin \reg_2(\ccR)$, Fact~\ref{fact:step3} does not claim any relation between the partition of $K[W_i,W_j]$ by graphs~$S^{ij}_\beta$ and that by graphs~$R^{ij}_\alpha$, and so we simply choose an arbitrary partition into $(\eps(\l_1),1/\l_1)$-regular graphs (such a partition can, for instance, be chosen randomly). Consequently, we only focus on partitioning $K[W_i,W_j]$ where $ij\in \reg_2(\ccR)$.

	To partition into graphs of equal densities, we will use the slicing lemma below. We omit the straightforward proof of the following fact (based on random partitioning). See also~\cites{FR02, RS04} for more details.
	\begin{fact}[Slicing lemma]
		\label{fact:slicing}
		For every $\eps>0$ and $\g\l\in \NN$, there exists $m_0(\eps,\g\l)$ such that the following holds for every $\g m\geq m_0(\eps,\g\l)$. If~$F$ is an~$\eps$-regular bipartite graph with
		\[
			V(F) = V_1(F)\dcup V_2(F) \tand |V_1(F)|= |V_2(F)| = \g m\,,
		\]
		then there exist an integer $r(F)$ and a partition
		\[
			F= \bigdcup\bigl\{F(\gamma)\colon \gamma\in \{0,\dots,r(F)\}\bigr\}\,,
		\]
		satisfying
		\begin{enumerate}[label=\alabel]
			\item\label{it:SlicingSparse2} The graphs $F(\gamma)$ are $(2\eps,1/\g\l)$-regular for every $1\le \gamma\le r(F)$.
			\item\label{it:SlicingSparse1} The exceptional part satisfies $|F(0)| < (\g m)^2/\g\l$.
		\end{enumerate}
	\end{fact}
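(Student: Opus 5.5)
The plan is to slice by random colouring, sending each edge of~$F$ independently to one of the parts $F(0),\dots,F(r(F))$. Write $X=V_1(F)$ and $Y=V_2(F)$, and let~$d$ be a density for which~$F$ is $(\eps,d)$-regular.

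\emph{Small density.} Suppose first that $d<1/\g\l$. Then we take $r(F)=0$ and $F(0)=F$. Regularity applied to $X'=X$, $Y'=Y$ gives $e(F)\le (d+\eps)\g m^2$. If this is still below $\g m^2/\g\l$ we are done. Otherwise $d\ge 1/\g\l-\eps$, and we treat~$F$ by the rule for the leftover part described below, declaring it one regular slice.

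\emph{The random slicing.} Now assume $d\ge 1/\g\l$. Put $r=\lfloor d\g\l\rfloor\ge 1$ and $q=1/(d\g\l)\le 1$. Independently, each edge of~$F$ receives colour $\gamma\in[r]$ with probability~$q$ for each such~$\gamma$, and colour~$0$ with the remaining probability $1-rq=\{d\g\l\}/(d\g\l)$.

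For fixed $X'\subseteq X$ and $Y'\subseteq Y$, the expected number of edges of colour $\gamma\ge1$ in $X'\times Y'$ is $q\,e_F(X',Y')$. By regularity this lies within $q\eps\g m^2\le\eps\g m^2$ of $|X'||Y'|/\g\l$. A Chernoff bound shows that a deviation of $\frac{\eps}{2}\g m^2$ has probability $\exp(-\Omega(\eps^2\g m^2))$. A union bound over the $4^{\g m}$ choices of $(X',Y')$ and the $r\le\g\l$ colours then shows that, for $\g m\ge m_0(\eps,\g\l)$, with positive probability every $F(\gamma)$ with $\gamma\ge1$ is $(\tfrac32\eps,1/\g\l)$-regular. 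This gives~\ref{it:SlicingSparse2} with room to spare.

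\emph{The exceptional part.} Its expected size is
\[
e(F)(1-rq)\le (d+\eps)\g m^2\cdot\frac{\{d\g\l\}}{d\g\l}\,,
\]
which is at most $\frac{\{d\g\l\}}{\g\l}\bigl(1+\frac{\eps}{d}\bigr)\g m^2$. This need not lie below $\g m^2/\g\l$, and I expect this to be the main obstacle: the bound~\ref{it:SlicingSparse1} is strict and carries no slack. I would resolve it by a case distinction on the fractional part $\{d\g\l\}$.

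If $\{d\g\l\}(1+\eps/d)\le 1-\eps$, concentration gives $|F(0)|<\g m^2/\g\l$ outright.

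Otherwise $\{d\g\l\}/\g\l$ is within $O(\eps/\g\l)$ of $1/\g\l$, using $d\g\l\ge1$. The colour-$0$ class then has density within about $\eps/\g\l$ of $1/\g\l$. By the same Chernoff and union-bound argument it is $(\eps+\eps/\g\l+\frac{\eps}{4},1/\g\l)$-regular, hence $(2\eps,1/\g\l)$-regular. In this case we relabel it as an additional slice $F(r+1)$, set $r(F)=r+1$, and take $F(0)=\vn$.

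The same device handles the boundary situation from the small-density case, where $d$ is just below $1/\g\l$: there~$F$ itself is $(2\eps,1/\g\l)$-regular and becomes the single slice $F(1)$, with $F(0)=\vn$.

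The remaining work is to fix the thresholds in this case split so that the constants close with the factor~$2$ in~\ref{it:SlicingSparse2}. The concentration and union bounds are routine once $m_0(\eps,\g\l)$ is taken large.
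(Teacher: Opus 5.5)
Your argument is correct and is the random-partitioning proof the paper has in mind (the paper omits the details), and your case split on the fractional part $\{d\tilde\ell\}$ is a sound way to get the strict inequality in part~(b).

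One intermediate claim is wrong, though it does no harm. In the second subcase the condition only gives $1-\{d\tilde\ell\}<\varepsilon+\varepsilon\{d\tilde\ell\}/d$. So the mean density $\{d\tilde\ell\}/\tilde\ell$ of the colour-$0$ class can differ from $1/\tilde\ell$ by roughly $\varepsilon/2$ (for instance when $d\tilde\ell$ is just below $2$), not by $O(\varepsilon/\tilde\ell)$. Your final bound $(\varepsilon+\varepsilon/\tilde\ell+\varepsilon/4,1/\tilde\ell)$ still holds: the sampling error is only $\varepsilon\{d\tilde\ell\}/(d\tilde\ell)$, and $d\tilde\ell\ge 1+\{d\tilde\ell\}\ge 2\{d\tilde\ell\}$, so the two errors together stay below $\varepsilon+\varepsilon/\tilde\ell$.
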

	We fix $ij\in\reg_2(\ccR)$ and apply Fact~\ref{fact:slicing} to each~$R^{ij}_\alpha$ for $\alpha\in [\l']$ with $\eps = \eps'$ and $\g\l = \l_1$. As a consequence, we obtain the following:
	\[
		R^{ij}_\alpha = R^{ij}_\alpha(0)\dcup\bigdcup \{R^{ij}_\alpha(\gamma)\colon 1\le \gamma \le r(R^{ij}_\alpha)\}\,,
	\]
	where each of the graphs $\{R^{ij}_\alpha(\gamma)\colon \gamma \ge 1\}$ is $(2\eps', 1/\l_1)$-regular, and further
	\[
		|R^{ij}_\alpha(0)|\le (m')^2/\l_1\,.
	\]
	Let
	\[
		\Gamma^{ij}_0 = \bigdcup_{\alpha\in [\l']}R^{ij}_\alpha(0)
	\]
	and observe that since $\l_1 = 4(\l')^2$, we have
	\[
		|\Gamma_0^{ij}|
		\le \l' \cdot (m')^2/\l_1
		= \frac{(m')^2}{4\l'}\,.
	\]
	We now have the new partition of $K[W_i, W_j]$ given by
	\[
		K[W_i, W_j] = \Gamma_0^{ij}\dcup\bigdcup_{\alpha=1}^{\l'}\bigdcup_{\gamma=1}^{r(R^{ij}_\alpha)}R^{ij}_\alpha(\gamma)\,.
	\]
	To obtain a `perfect partition', we further slice~$\Gamma^{ij}_0$; similar
	ideas were used in~\cites{NRS06,RS07}.
	If $\Gamma^{ij}_0$ is empty, then $r(\Gamma^{ij}_0)=0$ and no positive-indexed~$\Gamma^{ij}_0$-piece arises below.

	Each positive-indexed piece has at least
	$(1/\l_1-2\eps')(m')^2$ edges, whence 
	\[
		r(R^{ij}_\alpha)
		\le 
		\frac{\l_1}{1-2\eps'\l_1}\,.
	\]
	Since $2\eps'\l_1(\l_1+1)<1$, this integer is at most~$\l_1$ for every~$\alpha$,
	so there are at most $\l'\l_1$ such pieces. Since each is $2\eps'$-regular, their
	complement~$\Gamma_0^{ij}$ is~$\g\eps$-regular, where
	\[
		\g\eps = 2\eps'\l_1\l'\,.
	\]
	Put $\eps_1=\eps(\l_1)$ and $\eps_2=2\g\eps$. Since
	$\eps'=\eps_1^6$, $\eps_1\le(8\l_1^2)^{-1}$, and
	$\l_1=4(\l')^2$, we have
	\[
		2\eps'\le\eps_2,
		\qquad
		(\l_1+1)\eps_2\le\eps_1,
		\qquad
		\eps_2<\frac1{\l_1(\l_1+1)}\,.
	\]
	We then apply Fact~\ref{fact:slicing} to~$\Gamma^{ij}_0$, obtaining the partition
	\begin{align}
		\label{eqn:Gamma0}
		\Gamma_0^{ij} =\Gamma_0^{ij}(0)\dcup\bigdcup_{\beta = 1}^{r(\Gamma_0^{ij})}\Gamma^{ij}_0(\beta)\,,
	\end{align}
	where the graphs in $\{\Gamma^{ij}_0(\beta)\colon \beta\ge 1\}$ are all $(2\g\eps,1/\l_1)$-regular and
	\[
		|\Gamma_0^{ij}(0)|\le (m')^2/\l_1\,.
	\]
	As a consequence, $K[W_i,W_j]$ is partitioned as follows:
	\begin{align*}
		K[W_i,W_j] = \Gamma_0^{ij}(0)\dcup\bigdcup_{\beta = 1}^{r(\Gamma_0^{ij})}\Gamma^{ij}_0(\beta)\dcup \bigdcup_{\alpha=1}^{\l'}\bigdcup_{\gamma=1}^{r(R^{ij}_\alpha)}R^{ij}_\alpha(\gamma)\,.
	\end{align*}
	Further, since $|\Gamma^{ij}_0|\le (m')^2/(4\l')$, in view of~\eqref{eqn:Gamma0} we have
	\begin{align}
		\label{eqn:factstep3b}
		\sum_{\beta = 0}^{r(\Gamma_0^{ij})}|\Gamma^{ij}_0(\beta)|
		\le \frac{(m')^2}{4\l'}\,.
	\end{align}
	Write $Z=\Gamma_0^{ij}(0)$. Then $K[W_i,W_j]$ is the disjoint union of~$Z$ and
	\[
		r = r(\Gamma^{ij}_0) + \sum_{\alpha\in[\l']}r(R^{ij}_\alpha)
	\]
	graphs that are all $(\eps_2,1/\l_1)$-regular. Each of these graphs has
	between $(1/\l_1-\eps_2)(m')^2$ and
	$(1/\l_1+\eps_2)(m')^2$ edges, while $|Z|\le(m')^2/\l_1$.
	If $r\ge\l_1+1$, their lower size bound exceeds~$(m')^2$, and if
	$r\le\l_1-2$, their upper size bound together with~$Z$ is smaller
	than~$(m')^2$. The inequalities displayed above for~$\eps_2$ therefore give
	$r\in\{\l_1-1,\l_1\}$.

	If $r=\l_1-1$, the complement~$Z$ of these~$r$ graphs is
	$(r\eps_2,1/\l_1)$-regular and hence
	$(\eps_1,1/\l_1)$-regular. If $r=\l_1$, then the lower size bound
	for the~$r$ graphs gives $|Z|\le\l_1\eps_2(m')^2$.
	
	If $r(\Gamma_0^{ij})\ge1$, absorb~$Z$ into
	$\Gamma_0^{ij}(1)$. If $r(\Gamma_0^{ij})=0$, choose any one of the
	positive-indexed graphs $R^{ij}_\alpha(\gamma)$ and absorb~$Z$ into that
	graph instead. In either case, the resulting union is
	$((\l_1+1)\eps_2,1/\l_1)$-regular and therefore
	$(\eps_1,1/\l_1)$-regular.

	In this way, in both cases we obtain a partition of $K[W_i,W_j]$ into~$\l_1$ graphs 
	that are all $(\eps_1,1/\l_1)$-regular. Denote this partition 
	by $K[W_i,W_j] = \bigdcup_{\beta\in [\l_1]}S^{ij}_\beta$.
	
	Let~$\cJ^{ij}$ consist of those final parts that are not equal to a single
	graph $R^{ij}_\alpha(\gamma)$. Every~$S^{ij}_\beta$ with $\beta\in I^{ij}_1$ belongs to~$\cJ^{ij}$.
	If $r=\l_1-1$, or if $r=\l_1$ and
	$r(\Gamma_0^{ij})\ge1$, all graphs in~$\cJ^{ij}$ together are
	contained in~$\Gamma_0^{ij}$, and hence their total number of edges is at most
	$(m')^2/(4\l')$ by~\eqref{eqn:factstep3b}. In the remaining case,
	$r(\Gamma_0^{ij})=0$ and the only graph placed in~$\cJ^{ij}$ is the union of~$Z$ with one graph
	$R^{ij}_\alpha(\gamma)$. Its size is at most
	\begin{align*}
		\left(\frac1{\l_1}+(\l_1+1)\eps_2\right)(m')^2
		 & \le \left(\frac1{\l_1}+\eps_1\right)(m')^2
		\le \frac{2(m')^2}{\l_1}
		\le \frac{(m')^2}{\l'}\,.
	\end{align*}
	Thus in every case
	\[
		\sum_{\beta\in I^{ij}_1}|S^{ij}_\beta|
		\le\sum_{S\in\cJ^{ij}}|S|
		\le\frac{(m')^2}{\l'}\,,
	\]
	which proves Fact~\ref{fact:step3}~\ref{it:step32}.
\end{proof}

\end{document}